\newcommand\GL{\operatorname{GL}}
\newcommand\SL{\operatorname{SL}}
\newcommand\SO{\operatorname{SO}}
\newcommand\Sp{\operatorname{Sp}}
\newcommand\edge{\textrm{edge}}
\newcommand\trc{\textrm{trc}}
\newcommand\hyp{\textrm{hyp}}
\newcommand\cel{\textrm{cell}}
\newcommand\ini{\textrm{in}}
\newcommand\tm{\textrm{tm}}
\newcommand\bound{\textrm{bound}}
\newcommand\la{{\langle}}
\newcommand\ra{{\rangle}}
\newcommand\Aut{\operatorname{Aut}}
\newcommand\End{\operatorname{End}}
\newcommand\Ext{\operatorname{Ext}}
\newcommand\Hom{\operatorname{Hom}}
\newcommand\Img{\operatorname{im}}
\newcommand\Id{\operatorname{Id}}
\newcommand\Stab{\operatorname{Stab}}
\newcommand\coker{\operatorname{coker}}
\newcommand\Acal{\mathcal A}
\newcommand\Kcal{\mathcal K}
\newcommand\Ocal{\mathcal O}
\newcommand\Scal{\mathcal S}
\newcommand\Csr{\mathscr C}
\newcommand\Bsr{\mathscr B}
\newcommand\Isr{\mathscr I}
\newcommand\Lsr{\mathscr L}
\newcommand\Rsr{\mathscr R}
\newcommand\Wsr{\mathscr W}
\newcommand\Cds{\mathds{C}}
\newcommand\Fds{\mathds{F}}
\newcommand\Hds{\mathds{H}}
\newcommand\Pds{\mathds{P}}
\newcommand\Qds{\mathds{Q}}
\newcommand\Rds{\mathds{R}}
\newcommand\Zds{\mathds{Z}}
\newcommand{\Rnum}[1]{\expandafter\@slowromancap\romannumeral #1@}
\newtheorem{theorem}{Theorem}[section]
\newtheorem{lem}[theorem]{Lemma}  %lem 表示lemma
\newtheorem{thm}[theorem]{Theorem}  %the 表示theorem
\newtheorem{cor}[theorem]{Corollary}  %cor 表示corollary
\newtheorem{prop}[theorem]{Proposition}
\theoremstyle{definition}
\newtheorem{rmk}[theorem]{Remark}
\begin{document}
\title{On the Monodromy and Period Map of the Winger Pencil}  %标题
\author{Yunpeng Zi}  % 作者
\date{}  %时间 加上{}可使得时间不显示
\maketitle

\begin{abstract}
    The sextic plane curves that are invariant under the standard action of the icosahedral group on the projective plane make up a pencil of genus ten curves  (spanned by a sum of six lines and a three times a  conic). This pencil was first considered in a note by R.~M.~Winger in 1925 and is nowadays named after him.
    We gave this a modern treatment and proved among other things that it contains essentially every smooth genus  ten  curve with  icosahedral  symmetry.
    We here consider the monodromy group and the period map naturally defined by the icosahedral symmetry. We showed that this monodromy group is a subgroup of finite index in $\SL_2(\Zds[\sqrt{5}])$ and the period map brings the Winger pencil to a curve on the Hilbert modular surface $\SL_2(\Zds[\sqrt{5}])/\Hds^2$.
\end{abstract}

\section{Introduction}
This is the last part of a series of paper concerning the Winger's pencil and also a continuation of the author's Phd thesis. The Winger's pencil $\Wsr_{\Bsr}$ is a linear system of planer genus 10 curves with $\Acal_5$-symmetry on each fiber which is studied in \cite{wingerinvariants} by R.M\ Winger.
If $I$ is a complex 3-space endowed with a faithful $\Acal_5$-action, it is defined as a hypersurface by the following equation in the projective variety $\Pds(I)\times \Bsr\cong \Pds^2\times\Pds^1$
\begin{equation}
    g_2^3+tg_6=0
\end{equation}
Here $t\in \Bsr$ be a parameter, $g_2$ and $g_6$ are two generators of $\Cds[I]^{\Acal}_6$ where $g_2$ is a polynomial of degree two representing a smooth conic and $g_6$ is a polynomial of degree 6 representing the union of 6 lines. The singular members of this pencil appears only at four points, they are $t=-1$ an irreducible curve with six nodes which is also coming from identifying six pairs of double points on the Bring's curve, $t=0$ a smooth conic with multiple three, $t=\frac{27}{5}$ an irreducible curve with ten node and the last $t=\infty$ the union of six lines.

We have showed in \cite{zi2021geometry} that with some modifications the new object "Winger's family" parameterized all stable genus 10 curves with $\Acal_5$-symmetry.
It was showed in the same paper that for a smooth member $C_t$ ($t\in \Bsr^{\circ}$ a point in the smooth locus of $\Wsr\to\Bsr$) of the Winger pencil, its space of holomorphic forms $H^0(C,\omega_{C})$ is isomorphic to $V\oplus I\oplus I'=V\oplus E$ as a $\Cds \Isr$-module where $V$ is the permutation representation of dimension four and $I$ and $I'$ are three dimensional irreducible representations. This implies that $H^1(C;\Cds)$ is isomorphic to $V^{\oplus2}\oplus E^{\oplus 2}$. Since both $V$ and $E$ are complexifications of irreducible $\Qds\Isr$-modules $V_\Qds$ resp. $E_\Qds$ (which are therefore self-dual), it follows that there exist a canonical isotypical decomposition for  $H_1(C;\Qds)$
\begin{equation}\label{eqn:homologydec}
    H_1(C_t;\Qds)\cong(V_\Qds\otimes \Hom_{\Qds\Isr}(V_\Qds,H_1(C_t,\Qds)))\oplus(E_\Qds\otimes \Hom_{\Qds\Isr}(E_\Qds,H_1(C_t,\Qds)))
\end{equation}
with $\dim_\Qds \Hom_{\Qds\Isr}(V_\Qds,H_1(C_t,\Qds))=2$ and $\dim_{\Qds(\sqrt{5})} \Hom_{\Qds\Isr}(E_\Qds,H_1(C_t, \Qds))=2$.
We have proved in \cite{looijenga2021monodromy} that the (global) monodromy restricted to $V$-part has image in $\SL_2(\Zds)$, more explicitly it is the index $8$ congruence subgroup $\Gamma_1(3)$ of $\SL_2(\Zds)$. Hence the period map $p_{V}:\Bsr^{\circ}\to \Hds/\SL_2(\Zds)$ is a ramified finite morphism of degree 8.

In this paper, we will focus on the $E$-part of the decomposition \eqref{eqn:homologydec}. If $E_o$ is a fixed integral form of the representation $E$ with endomorphism ring $\Ocal_o$, the monodromy group and period map related to $E_o$ is denoted by $\Gamma_{E_o}$ and $p_{E_o}$. We could also observe that there exist an inner product on $E_o$ and a symplectic form on $H_1(C_t)$, the monodromy action will preserve these forms. This implies that the monodromy group must be a subgroup of $\Sp_1(\Ocal_o)\cong\SL_2(\Ocal_o)$. The main theorems are the following:
\begin{thm}\label{thm:main1}
    The monodromy group $\Gamma_{E_o}$ is a subgroup of finite index in $\SL_{2}(\Ocal_o)$. In particular it is arithmetic.
\end{thm}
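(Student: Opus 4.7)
The plan is to produce explicit generators of $\Gamma_{E_o}$ via Picard--Lefschetz theory at the four singular fibers of the Winger pencil, and then to deduce finite index in $\SL_{2}(\Ocal_o)$ from the fact that these generators include sufficiently many elementary matrices. This mirrors the strategy used for the $V$-part in \cite{looijenga2021monodromy}, but requires more care because $\Ocal_o$ is an order in $\Qds(\sqrt{5})$ rather than $\Zds$, so the target group is a Hilbert modular group rather than an elliptic modular group.

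Since $\Bsr^\circ = \Pds^1\smallsetminus\{-1,0,27/5,\infty\}$, its fundamental group is generated by four loops $\gamma_{-1},\gamma_0,\gamma_{27/5},\gamma_\infty$, one around each puncture, subject only to $\gamma_{-1}\gamma_0\gamma_{27/5}\gamma_\infty = 1$. I would first identify the vanishing cycles and local monodromies at each of these four fibers. The two nodal fibers $t=-1$ and $t=27/5$ can be treated directly by the classical Picard--Lefschetz formula: the $\Acal_5$-action is transitive on the nodes (six, respectively ten), so the vanishing cycles form a single $\Acal_5$-orbit and the local monodromy is the ordered product of the corresponding Dehn twists. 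For $t=0$ (a triple conic) and $t=\infty$ (union of six lines) the special fiber is non-reduced or reducible, so I would first pass to a semistable reduction of the family and then extract the vanishing cycles from the dual graph of the resulting central fiber, keeping track of the induced $\Acal_5$-action on components and nodes.

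With the local monodromies in hand, I would project each Dehn twist onto the $E$-isotypical component. Because the vanishing cycles at each singular fiber form a single $\Acal_5$-orbit and the projection $H_1(C_t,\Qds)\to E_\Qds\otimes\Hom_{\Qds\Isr}(E_\Qds,H_1(C_t,\Qds))$ is $\Acal_5$-equivariant, the projected monodromies assemble into explicit elements of $\SL_2(\Ocal_o)$. One expects these to be of elementary shape $\begin{pmatrix}1&\alpha\\0&1\end{pmatrix}$ or $\begin{pmatrix}1&0\\ \beta&1\end{pmatrix}$ with $\alpha,\beta\in\Ocal_o$, arising as the trace of a single Picard--Lefschetz transvection over an $\Acal_5$-orbit. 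Since $\Ocal_o=\Zds[\sqrt{5}]$ is Euclidean, $\SL_2(\Ocal_o)$ is generated by elementary matrices; to conclude finite index it then suffices to show that the $\alpha$'s produced span a finite-index additive subgroup of $\Ocal_o$ and that at least one transvection of the opposite (lower triangular) type appears. A standard criterion of Vaserstein, or alternatively a theorem of Venkataramana on Zariski-dense subgroups of $\Res_{\Qds(\sqrt{5})/\Qds}\SL_2$ containing enough unipotents, then finishes the argument.

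The main obstacle, I expect, will be the explicit analysis at $t=0$ and $t=\infty$, where the fibers are not purely nodal: locating the vanishing cycles and computing their classes in the $E$-part requires a careful semistable reduction and a compatible identification of the $E$-lattice $E_o$ across the family. A secondary challenge is the combinatorial bookkeeping needed to verify that the $\alpha,\beta\in\Ocal_o$ produced above together fill out a finite-index additive sublattice; this is where one must combine monodromy data from several punctures, using the product relation $\gamma_{-1}\gamma_0\gamma_{27/5}\gamma_\infty=1$ in an essential way.
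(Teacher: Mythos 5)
Your proposal follows essentially the same route as the paper: compute the local monodromies on the $E$-isotypic part of $H_1(C_t)$ via Picard--Lefschetz at the singular fibers, observe that they furnish unipotent elements of $\SL_2(\Ocal_o)$ in both triangular positions, and conclude finite index by an ``enough unipotents'' criterion for subgroups of $\SL_2$ of an order in $\Qds(\sqrt{5})$ --- the paper's Theorem \ref{Thm:cerfi} (Benoist--Oh, as formulated by Farb--Looijenga) playing exactly the role of your Vaserstein/Venkataramana citation. The computational core that you correctly flag as the main obstacle is what the paper spends most of its length on, carried out not by abstract semistable reduction but by two explicit polyhedral models ($\Sigma$ from the truncated dodecahedron and $\Pi$ from the great dodecahedron), each realizing an arc in the pencil joining two singular fibers and making $\Hom_{\Zds\Isr}(E_o,H_1)$ an explicit free $\Ocal_o$-module of rank two in which the Picard--Lefschetz formulas can be evaluated. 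Two small corrections to your plan: the fiber at $t=\infty$ is already reduced and nodal (six lines, dual graph $K_6$), so no semistable reduction is needed there, while $t=0$ does require the order-three base change but produces a monodromy of finite order three that contributes no unipotents and is in fact recovered from the other three local monodromies via the product relation; and $\Zds[\sqrt{5}]$ is a non-maximal order, hence not Euclidean (it is not even integrally closed), though this claim is inessential since the finite-index criterion you ultimately invoke does not depend on it.
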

And if $\Bsr^{+}$ be the open subvariety of $\Bsr$ obtained by removing from $\Bsr$ the three points representing nodal curves, we have the following theorem about the period map.
\begin{thm}
    The 'partial' period map $p_{E_o}:\Bsr^{+}\to\Gamma_{E_o}/\Hds^2 \to\SL_2(\Ocal_o)/\Hds^2$ has the property that the first arrow is open and the second map is finite.
\end{thm}
Moreover with the help of computer program \textit{Magma}, we could say a little more property about the group $\Gamma_{E_o}$ namely
\begin{thm}\label{thm:mainind}
    The monodromy group $\Gamma_{E_o}$ is of index two in $\SL_{2}(\Ocal_o)$.
\end{thm}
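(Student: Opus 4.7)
The overall plan is to combine the finite-index result of Theorem \ref{thm:main1} with an explicit finite computation in \emph{Magma}. Because $\Gamma_{E_o}$ is already known to have finite index in $\SL_2(\Ocal_o)$, pinning down this index reduces to writing down a convenient list of generators and performing a subgroup-index computation against a known presentation of $\SL_{2}(\Ocal_o)$.

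First, I would extract explicit generators of $\Gamma_{E_o}$ from the geometry of the pencil. The base $\Bsr^{\circ}$ is $\Pds^{1}$ minus the four singular values $\{-1,\; 0,\; \tfrac{27}{5},\; \infty\}$, so $\pi_{1}(\Bsr^{\circ})$ is free of rank three (one loop being redundant via the relation at infinity), and the monodromy along small loops around these punctures gives a generating set for the full monodromy group. Around each of the three nodal fibers the local monodromy on $H_{1}(C_{t};\Zds)$ is a product of Picard--Lefschetz transvections along the associated vanishing cycles, while around the multiple conic $t=0$ it is a finite-order transformation determined by the local normal form. Projecting each to the $E_{o}$-component of the decomposition \eqref{eqn:homologydec} produces explicit $2\times 2$ matrices with entries in $\Ocal_o$ preserving the induced Hermitian form, hence elements of $\SL_{2}(\Ocal_o)$.

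Next, I would feed these matrices into \emph{Magma}, whose arithmetic-group and Hilbert-modular packages support Ford-type fundamental domain computations for $\SL_{2}(\Ocal_o)$ acting on $\Hds^{2}$ and compute indices of explicitly-given finite-index subgroups. Theorem \ref{thm:main1} guarantees that this computation terminates. One should then verify the output that the index is exactly $2$, and, ideally, identify $\Gamma_{E_o}$ with a natural index-two subgroup of $\SL_{2}(\Ocal_o)$ --- for example the kernel of a character $\SL_{2}(\Ocal_o)\to \Zds/2$ arising from the abelianization. A sanity check is to confirm that each monodromy generator lies in this candidate kernel; this provides the inclusion $\Gamma_{E_o}\subseteq H$ conceptually, while the reverse inclusion is then delivered by the \emph{Magma} index computation.

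The main obstacle I expect is not the \emph{Magma} run itself but the careful set-up of the integral data beforehand: the monodromy matrices must be expressed in a basis of $E_{o}$ that is simultaneously compatible with the $\Ocal_o$-module structure and with the induced Hermitian form inherited from the symplectic form on $H_{1}(C_{t};\Qds)$, and any basis error would propagate silently through the calculation. A secondary subtlety is that, to give a conceptual interpretation of the answer rather than merely a computer output, one needs an explicit description of the abelianization of $\SL_{2}(\Ocal_o)$ and of a $\Zds/2$-valued character annihilating all monodromy generators --- this is where I would expect the bulk of the work to go, whereas the transvection and finite-order calculations around the four singular fibers should be essentially routine.
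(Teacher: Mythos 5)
Your overall strategy -- extract the three explicit monodromy generators (the fourth being redundant via the relation $\rho_0^{-1}=\rho_{\Sigma,\trc}\rho_{\Sigma,\edge}P^{-1}\rho_{\Pi,\trc}P$), hand them to \emph{Magma}, and read off the index -- is exactly the paper's strategy in spirit. But there is a concrete gap in how you propose to run the computation. You want to compute the index directly inside $\SL_2(\Ocal_o)$ using ``Ford-type fundamental domain'' or Hilbert-modular machinery; that machinery, and in particular coset enumeration, requires a finite presentation (or at least an effective fundamental domain) for the \emph{ambient} group, and no such presentation is at hand for $\SL_2(\Ocal_o)$, since $\Ocal_o=\Zds[\sqrt 5]$ is a non-maximal order. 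The standard packages are built around the maximal order. This is precisely the obstruction the paper's argument is designed to circumvent, and the workaround is the genuine content you are missing: the paper passes to $\SL_2(\Ocal)$ for the maximal order $\Ocal=\Zds[Y]/(Y^2-Y-1)$, where an explicit finite presentation on generators $A_0,\dots,A_4$ is available from the literature (Proposition \ref{genSLO}), rewrites the three monodromy matrices as explicit words in $A_1,A_2,A_4$, and runs the \textit{Index} routine on the finitely presented group to get $[\SL_2(\Ocal):\Gamma_{E_o}]=20$. It then separately computes $[\SL_2(\Ocal):\SL_2(\Ocal_o)]=10$ by reduction mod $2$: $\SL_2(\Ocal_o)$ is the preimage of $\SL_2(\Fds_2)\subset\SL_2(\Fds_4)$, and $|\SL_2(\Fds_4)|/|\SL_2(\Fds_2)|=60/6=10$ (Proposition \ref{indexSLOoSLO}). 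Dividing gives index $2$. Without the detour through the maximal order and this coset bijection, your computation as described cannot be set up.

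A secondary remark: your proposed conceptual identification of $\Gamma_{E_o}$ as the kernel of a $\Zds/2$-valued character is a reasonable a posteriori interpretation (any index-two subgroup is such a kernel), but it does no logical work here -- as you note yourself, both inclusions would still come from the machine computation -- and the paper does not pursue it. The place where you anticipate ``the bulk of the work'' (the abelianization of $\SL_2(\Ocal_o)$) is therefore not where the actual difficulty lies; the difficulty is making the ambient group computationally accessible in the first place.
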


The main tool of surveying the the monodromies and the periods are two models of the genus 10 curve with an $\Acal_5$-symmetry.
The first one which we named it as $\Sigma$ is coming from the regular icosahedron with a natural $\Acal_5$-action by removing in a $\Acal_5$-equivalent manner a small triangle at each vertices and identifying the antipodal points on the boundary. This is also the model that we used in \cite{looijenga2021monodromy}.
The second which we call it $\Pi$ is modified from the Euclidean realization of the Bring's curve. This realization is a regular polygon endowed with a $\Acal_5$-symmetry namely the \textit{Great Dodecahedron}. We will remove in a $\Acal_5$-equivalent manner a small pentagram at each vertices and identifying the antipodal points on the boundary. Each of the models give a real one-dimensional family $\Sigma_t$ resp. $\Pi_t$ on the Winger pencil such that they connect two different singular members of the Winger pencil. Then instead of computing the local monodromies on the Winger pencil, we could done it on the family $\Sigma_t$ or $\Pi_t$.

This paper is organized as following, we will introduce some basic lemmas and fix notations after the introduction. And we will take the next two sections devoting to introduce the details of the two models. We will use all these information to determine the local monodromies in Section 4. A global description to the monodromy group $\Gamma_{E_o}$ and period map $p_{E_o}$ will be given in the Section 5. And in the last section we will give the way of computing the index in the last section. As we have talked above, this computation is made by the computer program, the code for this computation has uploaded to \cite{indexprog}.
\subsection{Acknowledgement}
The author wants to thank Prof. Eduard Looijenga for his kind help and useful discussion.

\subsection{The Integral form of $E_{\Qds}$}\label{seclatt}
Before we study this project in detail, let us introduce some properties of $E$ the $6$-dimensional linear representation of $\Acal_5$. Let $I_{\Rds}$ be the Euclidean vector space with a faithful $\Acal_5$-action and we will denote the image of $\Acal_5\hookleftarrow \GL(I_{\Rds})$ as $\Isr$. The $\Rds\Isr$-module $I_{\Rds}$ is irreducible, even its complexification $I$ is an irreducible $\Cds\Isr$-module, but $\Isr$ is not definable over $\Qds$. If $I'$ is obtained from $I$ by precomposing the $\Isr$-action with an outer automorphism of $\Isr$, then $E:=I\oplus I'$ is as a representation naturally defined over $\Qds$. And the character computation shows that it actually splits over the field $\Qds[\sqrt{5}]$.

Let us take $V_o$ be the integral permutation representation of $\Acal_5$ of rank 4 the same as the notations in \cite{looijenga2021monodromy}. Recall that if we take $\Zds^5$ to be the free $\Zds$-module generated by $\{e_i\}_{i=1}^5$ and $\Acal_5$ acts on the set of generators in a natural way, the $\Zds\Acal_5$-module $V_o$ is defined by the following exact sequence.
\begin{equation*}\label{exsqV}
    0\to \Zds\to \Zds^5\to V_o\to 0
\end{equation*}
This exact sequence gives an a surjective map $\wedge^2\Zds^5\to \wedge^2 V_o$ whose kernel is identified with $\Zds^5\wedge (\sum_{i=1}^5e_i)$, so that we have the exact sequence of $\Zds\Acal_5$-modules
\begin{equation}\label{exactE}
    0\to V_o\to \wedge^2 \Zds^5\to \wedge^2 V_o\to 0%, \quad E_o:=\wedge^2 V_o.
\end{equation}
It is clear to see that $\wedge^2 V_o$ is an integral form of $E$ from the character computations. We will always denote $E_{\Qds}$ to be the vector space $\wedge^2 V_o\otimes \Qds$.
Using the similar notion of \cite{farb2021Arithmeticity} we will take $f_{i,j}$ as the image of $e_i\wedge e_j$ in $\wedge^2 V_o$. Let $\phi:\Zds^5\to \Zds$ be the morphism of taking the coordinate sum.
We will denote by $E_o$ the space generated by the elements $(f_{i,j}+f_{j,k}+f_{k,i})$ for all $i,j,k\in\{1,2,3,4,5\}$. Note that $E_o$ is the image of the $\Zds\Isr$-homomorphism
\begin{equation*}
    \delta: \xymatrix{\wedge^3 \Zds^5\ar[r]^{\iota_{\phi}}&\wedge^2\Zds^5 \ar[r]&\wedge^2 V_o}
\end{equation*}
Here $\iota_{\phi}$ is taking inner product with $\phi$. The following Lemma is the Lemma 2.1 of \cite{farb2021Arithmeticity}.
\begin{lem}\label{lem:basisEo}
    Let $e:=\sum_i f_{i,i+1}\in \wedge^2 V_o$. Then the $\Isr$-orbit of $e$ is the union of a basis of $E_o$ and its antipodal. Hence the $\Zds\Isr$-module $E_o$ is principal.
    Moreover there exists an inner product
    \begin{equation*}
        s:E_o\times E_o\to \Zds
    \end{equation*}
    for which this basis is orthogonal is $\Isr$-invariant.
    %$E_o$ resp.	
\end{lem}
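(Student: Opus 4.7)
An element $g\in\Isr$ fixes $e$ if and only if it preserves the directed Hamiltonian pentagon $1\to 2\to 3\to 4\to 5\to 1$, which forces $g$ to be a cyclic rotation. Hence $\Stab_\Isr(e)=\la(1,2,3,4,5)\ra\cong\Zds/5$ and $|\Isr\cdot e|=12$. The involution $(1,5)(2,4)\in\Acal_5$ reverses the pentagon, so sends $e$ to $-e$; together with the rotation it generates a dihedral group $D_5$ of order $10$ that stabilizes $\{e,-e\}$, yielding $|\Isr\cdot\{e,-e\}|=6$. I pick one representative $v_i$ from each of the six antipodal pairs.

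\textbf{Rational basis via induction.} Let $\chi:D_5\to\{\pm 1\}$ be the character trivial on rotations and equal to $-1$ on reflections. The orbit analysis shows $e$ is a $\chi$-eigenvector for $D_5$, so there is a canonical $\Qds\Isr$-linear map
\begin{equation*}
    \operatorname{Ind}_{D_5}^{\Isr}(\chi)\longrightarrow E_\Qds,\qquad 1\otimes 1\mapsto e,
\end{equation*}
whose image is the cyclic submodule $(\Qds\Isr)\cdot e$. A direct character calculation on the five conjugacy classes of $\Acal_5$ yields the induced character $(6,1,1,0,-2)$, matching exactly the character of $E=I\oplus I'$ (the essential identity being $\phi+\bar\phi=1$ with $\phi=(1+\sqrt 5)/2$). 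Hence the map is an isomorphism and $v_1,\dots,v_6$ form a $\Qds$-basis of $E_\Qds$.

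\textbf{Integrality and invariant inner product.} To promote this to an integral statement I would show that every triangle generator $T_{ijk}:=f_{ij}+f_{jk}+f_{ki}$ of $E_o$ lies in $\Zds\la\Isr\cdot e\ra$. Starting from the identity $e=T_{123}+T_{134}+T_{145}$ (which exhibits a pentagon as a sum of three ``consecutive'' triangles) and the combinatorial relation $T_{ijk}+T_{ikl}=f_{ij}+f_{jk}+f_{kl}+f_{li}$, one can form an analogous three-term decomposition of a second pentagon in $\Isr\cdot e$ and subtract to solve explicitly for an individual triangle with integer coefficients; $\Isr$-equivariance then recovers all ten triangles. For the inner product: the induced representation $\operatorname{Ind}_{D_5}^{\Isr}(\chi)$ carries a canonical positive-definite $\Isr$-invariant form making the coset basis orthonormal, because $\Isr$ acts on that basis by signed permutations. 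Transporting this form along the isomorphism of the previous step and clearing denominators produces an $\Isr$-invariant integral form $s:E_o\times E_o\to\Zds$ with $v_1,\dots,v_6$ orthogonal.

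\textbf{Main obstacle.} The orbit count and the character identification are essentially formal. The technical heart of the argument is the integrality step: character theory alone only supplies a basis of $E_\Qds$, and to conclude that $\Zds\la\Isr\cdot e\ra$ equals $E_o$ rather than a proper finite-index sublattice one must produce an explicit $\Zds$-identity writing a single triangle $T_{ijk}$ in terms of pentagons in $\Isr\cdot e$. Once one such identity is in hand, $\Isr$-equivariance propagates it to the full set of generators and closes the proof.
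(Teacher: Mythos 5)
The paper does not actually prove this lemma: it is quoted as Lemma~2.1 of the cited arithmeticity paper of Farb et al., with no internal argument, so there is nothing to compare your write-up against except the statement itself. Judged on its own, your orbit analysis and the identification $\operatorname{Ind}_{D_5}^{\Isr}(\chi)\cong E_{\Qds}$ are correct (the ``only if'' in your stabilizer claim deserves a word, since the $f_{i,j}$ satisfy the relations $\sum_j f_{i,j}=0$ in $\wedge^2 V_o$; it is cleanest to note that $\Stab(\{\pm e\})$ contains the maximal subgroup $D_5$ of $\Acal_5$ and cannot be all of $\Acal_5$ because $E$ has no one-dimensional subrepresentation and $e\neq 0$). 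The invariant inner product is indeed free once one knows $\Isr$ acts on the twelve-element orbit by signed permutations of the six chosen representatives.

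The genuine gap is the integrality step, which you correctly flag as the heart of the lemma but do not carry out, and the sketch you give does not close it. Subtracting the fan decompositions of two pentagons of the orbit produces a \emph{difference} $T-T'$ of two triangles, not a single triangle; and even granting all such differences together with $e=T_{123}+T_{134}+T_{145}$, one only concludes that $3T\in\Zds\la\Isr\cdot e\ra$ for every triangle $T$, i.e.\ that the pentagon lattice has index dividing $3$ in $E_o$ --- a further argument is still required to exclude index $3$. A concrete way to finish is a determinant computation in the basis $f_{12},f_{13},f_{14},f_{23},f_{24},f_{34}$ of $\wedge^2 V_o$: the six triangles containing the index $1$ already generate $E_o$ (the other four are integral combinations via the relations $T_{ijk}-T_{ijl}+T_{ikl}-T_{jkl}=0$) and span a sublattice of index $125$ in $\wedge^2 V_o$, while the six pentagons $e,e_0,\dots,e_4$ also span a sublattice of index $125$; since the latter lattice is contained in the former, they coincide. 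Without some such computation, or an explicit integral expression of one $T_{ijk}$ in terms of the twelve pentagons, your argument only shows that the orbit of $e$ spans a finite-index sublattice of $E_o$, not that a set of antipodal representatives is a $\Zds$-basis.
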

\begin{rmk}\label{homsym}
    There is a simple observation that if we assume that $(H,\la-,-\ra)$ is a $\Zds\Acal_5$-module endowed with a $\Acal_5$-invariant symplectic form $\la-,-\ra$, the inner product $s$ and the symplectic form $\la-,-\ra$ gives a symplectic form on $\Hom_{\Zds\Isr}(E_o,H)$ in a natural way, since $\Hom_{\Zds\Isr}(E_o,H)$ is a submodule of $\Hom(E_o,H)$. This form is also symplectic form and making $\Hom_{\Zds\Isr}(E_o,H)$ a symplectic $\Zds$-module.
\end{rmk}

Since $E_{\Qds}$ is not absolutely irreducible, there must have endomorphisms which not a multiple of $\Id$. We will construct one such example and show that it is defined over integers and generates the endomorphism ring $\End_{\Zds\Isr}(E_o)$. First let us take the generators of $\Acal_5$ as $\sigma_2:=(1,5)(3,4)$, $\sigma_3:=(2,5,3)$ and $\sigma_5=(1,2,3,4,5)$. It is clear that they satisfies the relation that $\sigma_2\sigma_3\sigma_5=1$. Note that $\sigma_5$ fixes $e$ and $\sigma_2\sigma_5^3\sigma_2\sigma_5^2\sigma_2=(2,5)(3,4)$ maps $e$ to $-e$.
Then the Lemma \ref{lem:basisEo} implies that the basis of $E_o$ is the following
\begin{equation*}
    \{e,e_0:=\sigma_2(e),e_1:=\sigma_5(e_0),\cdots,e_4:=\sigma_5^4(e_0)\}
\end{equation*}
Their relations are as following
\begin{equation*}
    \begin{aligned}
        \sigma_5: & e_0\to e_1\to e_2\to e_3\to e_4\to e_0; \hbox{fixes $e$}                                                              \\
        \sigma_2: & e\leftrightarrow e_0;e_1\leftrightarrow e_4;e_2\leftrightarrow -e_2;e_3\leftrightarrow -e_3; \hbox{fixes $(e+e_0  )$} \\
        \sigma_3: & e\to e_0\to e_1\to e;e_2\to e_4\to -e_3\to e_2; \hbox{fixes $(e+e_0+e_1)$}
    \end{aligned}
\end{equation*}
We will take the endomorphism $X\in \End(E_o)$ as following
\begin{equation*}
    \begin{aligned}
        X(e_{}) & :=e_0+e_1+e_2+e_3+e_4 \\
        X(e_0)  & :=e+e_1-e_2-e_3+e_4   \\
        X(e_1)  & :=e+e_0+e_2-e_3-e_4   \\
        X(e_2)  & :=e-e_0+e_1+e_3-e_4   \\
        X(e_3)  & :=e-e_0-e_1+e_2+e_4   \\
        X(e_4)  & :=e+e_0-e_1-e_2+e_3
    \end{aligned}
\end{equation*}
It is clear to check that $\sigma_i X=X\sigma_i$ for all $i\in\{2,3,5\}$. Hence it is a nontrivial element of $\End_{\Zds\Isr}(E_o)$ which is not a multiple by an integer. Moreover $X$ satisfies the relation that $X^2-5\Id=0$.
\begin{prop}\label{ringOo}
    The endomorphism ring $\Ocal_o:=\End_{\Zds\Isr}(E_o)$ is generated by $X$ subjects to the relation $X^2-5\Id=0$. Hence it is isomorphic to the quadratic algebraic integers $\Zds[\sqrt{5}]$.
\end{prop}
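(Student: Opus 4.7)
The plan is to first identify the rational commutant $\End_{\Qds\Isr}(E_\Qds)$ and then pin down the integral order $\Ocal_o$ inside it. Since the relations $\sigma_i X = X \sigma_i$ (for $i=2,3,5$) and $X^2 = 5\Id$ have already been verified above, the subring $\Zds[X]\subseteq\Ocal_o$ is automatically isomorphic to $\Zds[\sqrt 5]$, so only the reverse containment requires work.

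For the rational step, I would use that the character of $E$ takes values in $\Qds(\sqrt 5)$ and that $E_\Qds\otimes_\Qds\Qds(\sqrt 5)\cong I\oplus I'$, where $I$ and $I'$ are the two non-isomorphic absolutely irreducible three-dimensional $\Isr$-representations interchanged by the outer automorphism. Schur's lemma then yields
\[
\End_{\Qds(\sqrt 5)\Isr}(E_\Qds\otimes\Qds(\sqrt 5))\cong\Qds(\sqrt 5)\oplus\Qds(\sqrt 5),
\]
and the nontrivial Galois automorphism $\tau$ of $\Qds(\sqrt 5)/\Qds$ acts on the right-hand side by swapping the two factors (because $I$ and $I'$ are Galois-conjugate via that outer automorphism) while simultaneously conjugating scalars. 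Galois descent then identifies $\End_{\Qds\Isr}(E_\Qds)$ with the fixed subring $\{(a,\tau(a)):a\in\Qds(\sqrt 5)\}\cong\Qds(\sqrt 5)$.

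With this in hand, $\Ocal_o$ is a full order in $\Qds(\sqrt 5)$ containing $\Zds[X]\cong\Zds[\sqrt 5]$. The maximal order is $\Zds[\tfrac{1+\sqrt 5}{2}]$ and $\Zds[\sqrt 5]$ is its unique proper suborder, so I only need to rule out the element $\tfrac{1+X}{2}$. This is an immediate coordinate check: applying $\tfrac{1+X}{2}$ to the basis vector $e$ gives $\tfrac12(e+e_0+e_1+e_2+e_3+e_4)$, whose coordinates in the $\Zds$-basis of $E_o$ furnished by Lemma~\ref{lem:basisEo} are not integers, so $\tfrac{1+X}{2}\notin\Ocal_o$ and therefore $\Ocal_o=\Zds[X]$. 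I expect the main obstacle to be the Galois descent, which requires making precise that the outer automorphism exchanging $I$ and $I'$ is realized by the Galois action on $\Qds(\sqrt 5)$; once that is in place, both the rational and integral halves of the statement fall out cleanly.
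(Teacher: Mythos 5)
Your proof is correct, but it takes a genuinely different route from the paper: the paper gives no argument at all here and simply cites Lemma~2.8 of \cite{farb2021Arithmeticity}, whereas you supply a self-contained two-step proof. Your rational step (character field $\Qds(\sqrt 5)$, splitting $E_\Qds\otimes\Qds(\sqrt5)\cong I\oplus I'$ into Galois-conjugate absolutely irreducible summands, Schur's lemma plus Galois descent giving $\End_{\Qds\Isr}(E_\Qds)\cong\Qds(\sqrt5)$) is sound and consistent with what the paper already asserts in Corollary~\ref{fieldKo}; it does implicitly use that $I$ is realizable over $\Qds(\sqrt5)$ (trivial Schur index), which the paper also takes for granted. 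Your integral step is the right one: since $[\Zds[\tfrac{1+\sqrt5}{2}]:\Zds[\sqrt5]]=2$, the only orders of $\Qds(\sqrt5)$ containing $\Zds[X]$ are $\Zds[X]$ itself and the maximal order, so it suffices to exclude $\tfrac{1+X}{2}$, and your computation $\tfrac{1+X}{2}(e)=\tfrac12(e+e_0+\cdots+e_4)\notin E_o$ does exactly that, using that $\{e,e_0,\dots,e_4\}$ is a $\Zds$-basis of $E_o$ by Lemma~\ref{lem:basisEo}. One phrasing to tighten: $\Zds[\sqrt5]$ is not the \emph{unique} proper suborder of $\Zds[\tfrac{1+\sqrt5}{2}]$ (there are infinitely many, e.g.\ $\Zds+n\Zds[\tfrac{1+\sqrt5}{2}]$); what you actually need, and what is true, is that it is the unique order strictly contained in the maximal one and containing $\Zds[X]$, which follows from the index being $2$. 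With that wording fixed, your argument is a complete proof and arguably more informative than the paper's citation, since it makes visible exactly where the non-maximality of $\Ocal_o$ comes from (the half-integer coordinates of $\tfrac12(1+X)e$ in the orbit basis).
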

\begin{proof}
    This is the Lemma 2.8 of \cite{farb2021Arithmeticity}.
\end{proof}
From the Proposition \ref{ringOo}, we get the following Corollaries. The first one \ref{fieldKo} is a more explicit description of the endomorphism of $E_{\Qds}$ and the second \ref{autgoEo} concerns about the automorphism ring of $E_o$.
\begin{cor}\label{fieldKo}
    The endomorphism ring $\Kcal:=\End_{\Qds\Isr}(E_{\Qds})$ of $E_{\Qds}=E_o\otimes \Qds$  is generated by $X$ subjects to the relation $X^2-5\Id=0$. Hence it is isomorphic to the quadratic field extension $\Qds[\sqrt{5}]/\Qds$.
\end{cor}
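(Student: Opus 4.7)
The plan is to derive this directly from Proposition \ref{ringOo} by base change along $\Zds\hookrightarrow\Qds$. The key point is that the natural ring homomorphism
\begin{equation*}
    \Ocal_o\otimes_{\Zds}\Qds=\End_{\Zds\Isr}(E_o)\otimes_{\Zds}\Qds\longrightarrow \End_{\Qds\Isr}(E_o\otimes_{\Zds}\Qds)=\Kcal
\end{equation*}
is an isomorphism. I would prove this by first recalling that because $E_o$ is a free $\Zds$-module of finite rank, the map $\End_{\Zds}(E_o)\otimes_{\Zds}\Qds\to \End_{\Qds}(E_{\Qds})$ is an isomorphism of $\Qds$-algebras (the abelian-group statement $\Hom_{\Zds}(A,B)\otimes\Qds\cong \Hom_{\Qds}(A\otimes\Qds,B\otimes\Qds)$ for finitely generated free $A$ is standard, and the product structures match). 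Then taking $\Isr$-equivariant endomorphisms on both sides: since $\Isr$ is a finite group and we are in characteristic zero, the $\Isr$-invariance functor is exact on $\Zds\Isr$-modules (indeed the averaging projector $\frac{1}{|\Isr|}\sum_{g}g$ exists after tensoring with $\Qds$), and in particular commutes with the flat base change $-\otimes_{\Zds}\Qds$. Therefore restricting to equivariant endomorphisms yields the desired isomorphism $\Ocal_o\otimes_{\Zds}\Qds\cong \Kcal$.

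Combining this with Proposition \ref{ringOo}, which identifies $\Ocal_o$ with $\Zds[X]/(X^2-5)\cong \Zds[\sqrt{5}]$, we obtain
\begin{equation*}
    \Kcal\;\cong\;\Zds[\sqrt{5}]\otimes_{\Zds}\Qds\;\cong\;\Qds[\sqrt{5}],
\end{equation*}
with the generator being the image of the same endomorphism $X$ constructed explicitly before Proposition \ref{ringOo}, and with the same defining relation $X^2-5\,\Id=0$. Since $5$ is not a square in $\Qds$, this ring is the quadratic field extension $\Qds[\sqrt{5}]/\Qds$, as claimed.

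There is essentially no serious obstacle: the whole content is the compatibility of equivariant-endomorphism formation with the base change $\Zds\to\Qds$, which is immediate from finite-rank freeness of $E_o$ and the characteristic-zero hypothesis. The only thing to double-check is that $X$ itself — not merely some $\Qds$-multiple of it — still generates over $\Qds$; but this is clear because $X\in \Ocal_o\subset \Kcal$ and the minimal polynomial $T^2-5$ is already irreducible over $\Qds$, so $\Qds[X]$ is a two-dimensional $\Qds$-subalgebra of $\Kcal$, which by dimension count equals all of $\Kcal$.
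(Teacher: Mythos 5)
Your proposal is correct and follows essentially the same route as the paper, which states Corollary \ref{fieldKo} as an immediate consequence of Proposition \ref{ringOo} via extension of scalars $\Zds\to\Qds$; you have merely supplied the (standard) details that $\End_{\Zds}(E_o)\otimes_{\Zds}\Qds\cong\End_{\Qds}(E_{\Qds})$ for the free finite-rank module $E_o$ and that taking $\Isr$-invariants commutes with the flat base change. One tiny imprecision: the $\Isr$-invariance functor is not exact over $\Zds$ (only left exact), but that does not matter here since left exactness plus flatness of $\Qds$ already gives the commutation you need.
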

\begin{cor}\label{autgoEo}
    The automorphism group $\Aut_{\Zds\Isr}(E_o)$ is a multiplicative group cyclic of order two. More explicitly it is the group $<\pm \Id>$.
\end{cor}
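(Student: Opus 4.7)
The plan is to realize $\Aut_{\Zds\Isr}(E_o)$ as a unit group inside $\Ocal_o$ and then to cut it down by demanding compatibility with the $\Isr$-invariant inner product $s$ of Lemma~\ref{lem:basisEo}. Any $\Zds\Isr$-linear automorphism $u$ of $E_o$ belongs to $\End_{\Zds\Isr}(E_o)=\Ocal_o$, so by Proposition~\ref{ringOo} it has the form $u=a\Id+bX$ for some $a,b\in\Zds$, and invertibility of $u$ in $\Ocal_o\cong\Zds[\sqrt{5}]$ amounts to the norm relation $a^2-5b^2=\pm 1$. This Pell form alone has infinitely many integer solutions, generated up to sign by the fundamental unit $2\Id+X$ (indeed $(2\Id+X)(-2\Id+X)=X^2-4\Id=\Id$), so module-theoretic invertibility is not by itself enough.

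The finiteness of $\Aut_{\Zds\Isr}(E_o)$ therefore has to come from the additional structure carried by $E_o$, namely the positive definite inner product $s$ of Lemma~\ref{lem:basisEo}, which a genuine lattice automorphism must preserve. Using the explicit formula $X(e_0)=e+e_1-e_2-e_3+e_4$ together with the $s$-orthogonality of $\{e,e_0,\dots,e_4\}$, one reads off $s(e_0,X(e_0))=0$ and $s(X(e_0),X(e_0))=5\,s(e_0,e_0)$, so
\begin{equation*}
s\bigl(u(e_0),u(e_0)\bigr)=(a^2+5b^2)\,s(e_0,e_0).
\end{equation*}
Demanding that $u$ be an isometry yields $a^2+5b^2=1$, whose only integer solutions are $(a,b)=(\pm 1,0)$; thus $\Aut_{\Zds\Isr}(E_o)=\langle\pm\Id\rangle$.

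The key conceptual move --- and the only point where anything non-formal happens --- is trading the indefinite Pell form $a^2-5b^2=\pm 1$ (which governs pure module-theoretic units and yields an infinite group) for the anisotropic form $a^2+5b^2=1$ (which governs $s$-isometries and yields exactly two solutions). Once this trade is granted, the stated assertion drops out immediately from the description of $\Ocal_o$ in Proposition~\ref{ringOo}. The main obstacle is therefore not a calculation but the justification that the relevant automorphisms are the ones respecting the lattice structure $(E_o,s)$ rather than merely the underlying $\Zds\Isr$-module; the computation above then singles out $\pm\Id$ by positive definiteness.
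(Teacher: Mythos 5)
Your diagnosis is correct, and it in fact exposes an error in the paper's own proof. The paper argues that, writing $Y=a_1X+b_1\Id$ and $Y^{-1}=a_2X+b_2\Id$, the relation $YY^{-1}=\Id$ forces $a_1=a_2=0$; this is false, because $\Ocal_o\cong\Zds[\sqrt{5}]$ has infinite unit group: $(X+2\Id)(X-2\Id)=X^2-4\Id=\Id$, so $X+2\Id$ is a $\Zds\Isr$-module automorphism of $E_o$ not equal to $\pm\Id$, and $\Aut_{\Zds\Isr}(E_o)=\Ocal_o^{\times}=\{\pm(X+2\Id)^n:n\in\Zds\}$ is infinite. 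Hence the corollary as literally stated (automorphisms of the bare $\Zds\Isr$-module) is false, and your observation that the Pell form $a^2-5b^2=\pm1$ has infinitely many solutions pinpoints exactly where the paper's deduction breaks down.

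Your repaired version --- restricting to automorphisms that are isometries of the $\Isr$-invariant inner product $s$ of Lemma \ref{lem:basisEo} --- is correct: since $\Isr$ permutes the orbit basis transitively up to sign, all $s(e_i,e_i)$ are equal and positive, $s(e_0,X(e_0))=0$ and $s(X(e_0),X(e_0))=5\,s(e_0,e_0)$, so an isometry $a\Id+bX$ must satisfy $a^2+5b^2=1$, i.e.\ $(a,b)=(\pm1,0)$. This corrected statement is also what the paper actually needs downstream: the ``uniqueness up to sign'' of the identifications $\Csr_2(\Sigma)/(\iota-1)\cong E_o$ and $\Csr_0(\tilde{\Pi})/(\iota+1)\cong E_o$ concerns isomorphisms matching the distinguished orthogonal bases, and those are automatically isometries. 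So your argument should be read as a correct proof of the corollary with $\Aut_{\Zds\Isr}(E_o)$ replaced by the group of $\Zds\Isr$-equivariant isometries of $(E_o,s)$, together with a counterexample to the literal statement; the one point you leave open (whether the intended automorphisms are those of the lattice with form rather than of the module) must indeed be resolved in favor of the form, since otherwise the claim is simply wrong.
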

\begin{proof}\textbf{(Proof of Corollary \ref{autgoEo})}
    Let $Y\in\Aut_{\Zds\Isr}(E_o)$ be an automorphism of $E_o$. Since $Y$ and $Y^{-1}$ are both endomorphisms of $E_o$, there exist integers $a_1$, $b_1$,$a_2$ and $b_1$ such that $Y=a_1X+b_1\Id$ and $Y^{-1}=a_2X+b_2\Id$. From the Lemma \ref{ringOo}, the relation $YY^{-1}=\Id$ implies that $a_1=a_2=0$ and $b_1=b_2=\pm 1$. This finishes the proof.
\end{proof}
Despite $E_o$ there are other integral forms of $E_{\Qds}$. For example we may take $E\subset E_o$ to be the subspace which has even coefficients sum with respect to the basis in \ref{lem:basisEo}. It is a sublattice of index two in $E_o$ and it was proved in the Lemma 2.8 of \cite{farb2021Arithmeticity} that $\Ocal:=\End_{\Zds\Isr}(E)$ is isomorphic to to the ring of algebraic integers in $\Qds[\sqrt{5}]$ i.e. $\Zds[Y]/(Y^2-Y-1)$. Moreover there is an embedding  $\Ocal_o\hookrightarrow \Ocal$ given by $X\to (2Y-1)$.

%We will have $\Ocal_o\hookrightarrow \Ocal$ by $X\to (2Y-1)$. Moreover 

\subsection{Criterion of Generating a Lattice}
As the end of this section, let us introduce some criterions about when a set of elements become the generators of a lattice.
\begin{lem}\label{genlatt}
    Let $(H,\la-,-\ra)$ be a lattice of rank $n$ with bilinear form $\la-,-\ra$. Assume that $a_1,\cdots,a_n$ be elements of $H$ such that they form a $\Qds$-basis of $H_{\Qds}:=H\otimes \Qds$.  If for every $n$ coprime integers $\{\alpha_1,\cdots,\alpha_n\}$ i.e. $\gcd(\alpha_1,\cdots,\alpha_n)=1$, there exist $y(\alpha_1,\cdots,\alpha_n)\in H$ such that
    \begin{equation*}
        \la\sum_{i=1}^n\alpha_i a_i,y(\alpha_1,\cdots,\alpha_n)\ra=1
    \end{equation*}
    then $a_1,\cdots,a_n$ generates $H$ over $\Zds$. Furthermore if we assume $(H,\la-,-\ra)$ is unimodular the converse also holds.
\end{lem}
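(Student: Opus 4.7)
My plan is to prove the forward direction by contradiction, and handle the converse via standard duality for unimodular lattices.

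For the main implication, set $L := \Zds a_1 + \cdots + \Zds a_n \subseteq H$; since the $a_i$ form a $\Qds$-basis, $L$ has finite index in $H$, so showing $L = H$ amounts to ruling out any $x \in H \setminus L$. Given such an $x$, I would pick the minimal positive integer $q \ge 2$ with $qx \in L$ (equivalently, the LCM of the denominators of the $r_i$ when $x = \sum r_i a_i$) and write $qx = \sum_i p_i a_i$. Minimality of $q$ forces $\gcd(p_1,\ldots,p_n,q)=1$: any common prime divisor $\ell$ would give $\ell \mid q$ and $(q/\ell) r_i \in \Zds$ for all $i$, contradicting the choice of $q$. Setting $d := \gcd(p_1,\ldots,p_n)$ I therefore obtain $\gcd(d,q)=1$ and a coprime tuple $(p_i/d)$. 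Applying the hypothesis to this tuple yields $y\in H$ with $\la \sum_i (p_i/d) a_i, y\ra = 1$; rescaling gives $\la x, y\ra = d/q$. Since $x,y\in H$ and the bilinear form on $H$ is integer-valued, $q \mid d$, which together with $\gcd(d,q)=1$ and $q\ge 2$ is a contradiction.

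For the converse in the unimodular case: if $L = H$, then $a_1,\ldots,a_n$ is already a $\Zds$-basis of $H$, so any coprime tuple $(\alpha_i)$ produces a primitive vector $v = \sum \alpha_i a_i \in H$. In a unimodular lattice every primitive vector extends to a $\Zds$-basis, or equivalently pairs to $1$ with some element of $H$ via the isomorphism $H \cong \Hom(H,\Zds)$ induced by the form; this element is the required $y$.

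The only genuinely delicate step is the divisibility bookkeeping producing $\gcd(d,q)=1$, which rests squarely on the minimality of $q$. Once that is in hand, the forward contradiction is one line, and the converse is routine linear algebra over $\Zds$ for unimodular forms, so I do not expect any further obstacles.
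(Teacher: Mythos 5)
Your proof is correct and follows essentially the same route as the paper's: a contradiction from a minimal denominator $q\ge 2$ combined with the integrality of the pairing for the forward direction, and the self-duality $H\cong\Hom(H,\Zds)$ of a unimodular lattice for the converse. If anything, your divisibility bookkeeping (separating $d=\gcd(p_1,\dots,p_n)$ from $q$ and concluding $q\mid d$) is a bit more careful than the paper's, which asserts $\gcd(\alpha_1,\dots,\alpha_n)=1$ directly from the minimality of $x$ and $m$ without spelling out the case where a prime divides all the $\alpha_i$ but not $m$.
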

\begin{proof}
    (1)\ Let us assume that $a_1,\cdots,a_n$ cannot generates $H$ over $\Zds$ and denote the sublattice generated by $a_1,\cdots,a_n$ by $H'$. Since $\{a_1,\cdots,a_n\}$ is a $\Qds$-basis of $\Qds$-vector space $H_{\Qds}$, $H'$ must have the same rank as $H$.
    There exist an element $x\in H$ but $x\notin H'$ satisfying that for every positive integer $k>1$, $\frac{x}{k}\notin H$ and there exist minimal positive integer $m\in\Zds$ such that $m\neq 1$ and $mx\in H'$.
    Hence there exist integers $\alpha_1,\cdots,\alpha_n$ such that $mx=\sum_{i=1}^n\alpha_i a_i$. By the minimality of $x$ and $m$, $\alpha_1,\cdots,\alpha_n$ has the property that $(\alpha_1,\cdots,\alpha_n)=1$. Hence there exist an element $y(\alpha_1,\cdots,\alpha_n)\in H$, such that $\la\sum_{i=1}^n\alpha_i a_i,y(\alpha_1,\cdots,\alpha_n)\ra=1=m\la x,y(\alpha_1,\cdots,\alpha_n)\ra$, which is not possible.

    (2)\ Let us assume that $a_1,\cdots,a_n$ generates $H$ and $(H,\la-,-\ra)$ is unimodular.
    Let $\alpha_1,\cdots,\alpha_n$ be $n$ integers satisfying $\gcd(\alpha_1,\cdots,\alpha_n)=1$. Hence there exist integers $\beta_i$ such that $\sum_{i=1}^n\alpha_i\beta_i=1$. Let us take $y(\alpha_1,\cdots,\alpha_n)\in H^{\vee}=H$ such that $\la a_i,y(\alpha_1,\cdots,\alpha_n)\ra=\beta_i$. Hence we have $\la\sum_{i=1}^n\alpha_i a_i,y(\alpha_1,\cdots,\alpha_n)\ra=\sum_{i=1}^n\alpha_i\beta_i=1$.
\end{proof}
Using the similar argument, we can prove the following Lemma, which is frequently used in the material below.

\begin{lem}\label{genhom}
    Let $(H,\la-,-\ra)$ be an unimodular lattice of finite rank such that $H$ is a $\Zds\Acal_5$-module and $\la-,-\ra$ is an $\Acal_5$-invariant bilinear form. Assume that $\Hom_{\Zds\Acal_5}(E_o,H)$ is free $\Zds$-module of rank $n$ and $\phi_1,\cdots,\phi_n$ are $n$ linearly inequivalent elements of $\Hom_{\Zds\Acal_5}(E_o,H)$ such that they form a basis of $\Qds$-vector space $\Hom_{\Qds\Acal_5}(E_{\Qds},H_{\Qds})$. The elements $\phi_1,\cdots,\phi_n$ generates $\Hom_{\Zds\Acal_5}(E_o.H)$ over $\Zds$ if and only if for every set of $n$ integers $\{\alpha_1,\cdots,\alpha_n\}$ satisfying $\gcd(\alpha_1,\cdots,\alpha_n)=1$, there exist $y(\alpha_1,\cdots,\alpha_n)\in H$ such that
    \begin{equation*}
        \la\sum_{i=1}^n\alpha_i\phi_i(e),y(\alpha_1,\cdots,\alpha_n)\ra=1
    \end{equation*}
\end{lem}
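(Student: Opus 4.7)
The plan is to reduce everything to a statement about a particular sublattice of $H$. Lemma \ref{lem:basisEo} tells us that $E_o$ is a principal $\Zds\Isr$-module generated by $e$, which means evaluation at $e$ yields an injective $\Zds$-linear map $\mathrm{ev}_e:\Hom_{\Zds\Acal_5}(E_o,H)\hookrightarrow H$; write $G'$ for its image. Under this identification, the $\phi_i$ correspond to the vectors $\phi_i(e)\in H$, and the pairing that appears in the statement is simply the restriction of $\la-,-\ra$. The task therefore becomes: prove that $\phi_1,\dots,\phi_n$ is a $\Zds$-basis of $G'$ if and only if for every coprime tuple $(\alpha_1,\dots,\alpha_n)$ there exists $y\in H$ with $\la\sum_i\alpha_i\phi_i(e),y\ra=1$.

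The hard part, and the step that genuinely uses the representation-theoretic structure, is to show that $G'$ is \emph{saturated} in $H$, i.e.\ that if $h\in H$ and $mh\in G'$ for some positive integer $m$, then already $h\in G'$. To see this, I will write $mh=\phi(e)$ for some $\phi\in\Hom_{\Zds\Acal_5}(E_o,H)$ and consider the $\Qds$-linear map $\psi:=m^{-1}\phi:E_{\Qds}\to H_{\Qds}$, which sends $e$ to $h\in H$. Because $E_o$ is generated over $\Zds\Isr$ by $e$ (Lemma \ref{lem:basisEo}) and $H\subseteq H_{\Qds}$ is $\Acal_5$-stable, the $\Acal_5$-equivariance of $\psi$ forces $\psi(E_o)\subseteq H$, so $\psi\in\Hom_{\Zds\Acal_5}(E_o,H)$ and hence $h=\psi(e)\in G'$. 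Without the cyclic generator $e$ this step would fail, so I expect this to be the main obstacle to making the proof work.

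With saturation in hand, the forward direction is quick. If $\phi_1,\dots,\phi_n$ generate $\Hom_{\Zds\Acal_5}(E_o,H)$ over $\Zds$, then $\{\phi_i(e)\}$ is a $\Zds$-basis of $G'$, so whenever $\gcd(\alpha_i)=1$ the element $\sum_i\alpha_i\phi_i(e)$ is primitive in $G'$ and, by saturation, primitive in $H$. Unimodularity of $(H,\la-,-\ra)$ then lets me extend this primitive element to a $\Zds$-basis of $H$ and take $y$ to be the corresponding dual basis vector, reproducing the second half of the proof of Lemma \ref{genlatt}.

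The converse direction is essentially a verbatim adaptation of the first half of the proof of Lemma \ref{genlatt}: if the $\phi_i$ do not generate, choose $\phi\in\Hom_{\Zds\Acal_5}(E_o,H)$ outside their $\Zds$-span but with $m\phi=\sum_i\alpha_i\phi_i$ for some $m>1$ and coprime $\alpha_i$; evaluating at $e$ gives $\sum_i\alpha_i\phi_i(e)=m\phi(e)$, whence $\la\sum_i\alpha_i\phi_i(e),y\ra\in m\Zds$ for every $y\in H$, which cannot equal $1$.
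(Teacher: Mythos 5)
Your proof is correct and follows the same route the paper intends: the paper's own proof of this lemma is just ``the proof is similar as above,'' i.e.\ an adaptation of Lemma~\ref{genlatt}, and both of your directions are faithful adaptations of the two halves of that argument. The one genuinely new ingredient you supply --- the saturation of the image of $\mathrm{ev}_e$ in $H$, deduced from $E_o$ being principal over $\Zds\Isr$ --- is exactly what is needed to make the ``generates $\Rightarrow$ pairing condition'' direction work here, since unlike in Lemma~\ref{genlatt} the elements $\phi_i(e)$ need not span $H$ and one cannot simply prescribe $\la \phi_i(e),y\ra=\beta_i$; the paper leaves this point implicit, and you are right to flag it as the main obstacle.
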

\begin{proof}
    The proof is similar as above.
\end{proof}

\section{Geometric Model from Icosahedron}\label{Sig}

We introduced a geometric model in Section 2 of \cite{looijenga2021monodromy} for a smooth fiber $C_t$ with $t\in\Bsr^{\circ}$ and describe two stable degenerations in terms of it. Here we will give a quick summary on this model without proof. Note that we will always use $z$ to denote a 2-cell or a face of a polyhedron, $y$ to denote a 1-cell or an edge and $x$ to denote a 0-cell or a vertex beginning from this section. For an oriented edge $y$, let $\ini(y)$ be its initial point and $\tm(y)$ be its terminal point.

Let us fix an oriented euclidean 3-space $I_{\Rds}$ and a regular dodecahedron $\tilde{\Sigma}\subset I_{\Rds}$ centered at the origin. Let $\iota$ be the antipodal map which is orientation reversing. Note that the automorphism group $\Isr\subset \SO(I_{\Rds})$ of $\tilde{\Sigma}$ is isomorphic to $\Acal_5$, the alternating group of five elements.
Let $\hat{\Sigma}$ be obtained from the dodecahedron $\tilde{\Sigma}$ by removing in a $\Isr$-invariant manner a small regular triangle  centered at each vertex of $\tilde{\Sigma}$ so that the faces of $\hat{\Sigma}$ are oriented solid $10$-gons.
We now identify opposite points on the boundary of $\hat\Sigma$ and thus obtain a complex $\Sigma$ that is a closed oriented surface of genus $10$ endowed with an action of $\Isr$ (See Figure \ref{trdo}).
\begin{figure}\label{trdo}
    \centering
    {\includegraphics[width=0.2\textwidth]{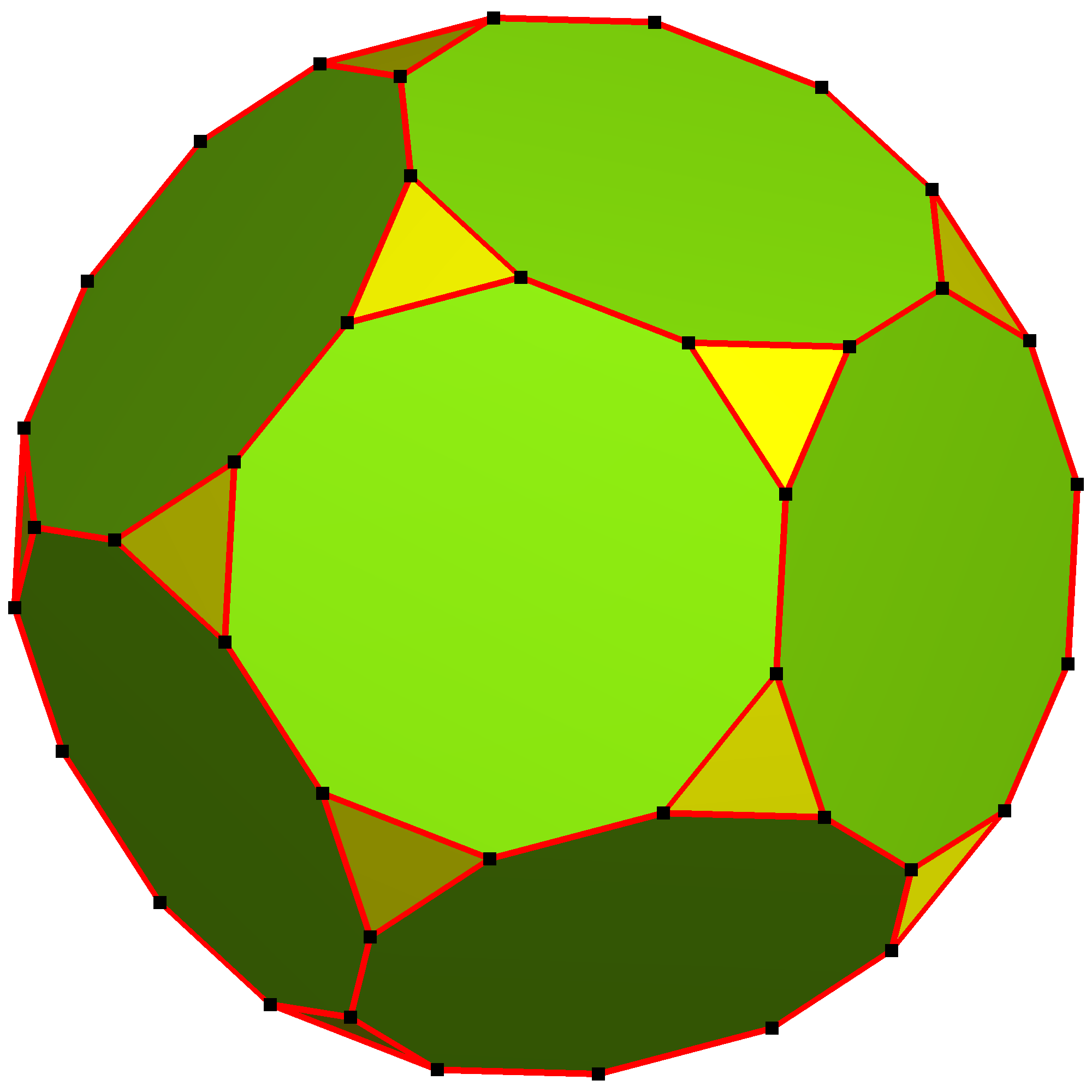}}
    \caption{\small{Removing in a $\Isr$-invariant manner a small regular triangle  centered at each vertex of $\tilde{\Sigma}$}}
    \label{hypemod1}
\end{figure}

Recall that there exist two kinds of oriented 1-cells resp. 1-cycles on $\Sigma$. The ones coming from the truncation is called \emph{$1$-cells resp. 1-cycles of truncation type}. The 1-cycles of truncation type are bijectively indexed by the set $\Csr_0(\tilde{\Sigma})$ of vertices of $\tilde{\Sigma}$, namely for each vertex $x\in\Csr_0(\tilde{\Sigma})$ the sum of three 1-cells of truncation type together with its counterclockwise orientation is a 1-cycle of truncation type. This labeling is denoted by $\delta_x$, the set of all 1-cells of truncation type is denoted by $\Delta_{\trc}$ and we have $\delta_{\iota x}=-\delta_x$.
The ones coming from the edges of $\tilde{\Sigma}$ is called \emph{$1$-cells resp. 1-cycles of edge type}. The 1-cycles of edge type are indexed by $\Csr_1(\tilde{\Sigma})$, namely for each oriented edges $y\in\tilde{\Sigma}$ the division $(y-\iota y)$ is a 1-cycle of edge type. This labeling is denoted by $\delta_y$ and the set of all 1-cells of edge type is denoted by $\Delta_{\edge}$ . Note that this labeling is not bijective for we have the relation that $\delta_{-y}=\delta_{\iota y}=-\delta_y$.
%Similarly there are 30 1-cells of edge type and 60 if together with orientations on $\Sigma$. They will form 30 oriented closed loops of edge type which are also 30 1-cycles of edge type. 
%We have on $\Sigma$ 30 1-cells of truncation type and 60 if we take the orientation into consideration. They will form 20 oriented closed loops of truncation type which are also 20 1-cycles of truncation type.

The polyhedron $\Sigma$ can be endowed with a natural complex structure $J_{\tau}$ where $\tau$ is the length of the 1-cells of truncation type such that the following proposition follows
\begin{prop}\label{degsig}(Proposition 3.4 of \cite{looijenga2021monodromy})
    The Riemann surface $(\Sigma, J_\tau)$ is the set of complex points of a complex real algebraic curve. It has genus 10 and comes with a faithful $\Isr$-action, hence is isomorphic to a member of the Winger pencil.
    We thus have defined a continuous map $\gamma: [0,1]\to \Bsr$ which traverses the real interval  $[\infty, \frac{27}{5}]$
    and  which maps $(0,1)$ to $\Bsr^\circ$ (and so lands in the locus where $t$ is real and $>\frac{27}{5}$),
    such that the pull-back of the Winger pencil  yields the family constructed above. The degenerations of $\Sigma$ into $\Sigma_\edge$ resp. $\Sigma_\trc$ have $\Delta_\edge$ resp.\ $\Delta_\trc$ as their sets of vanishing cycles.
\end{prop}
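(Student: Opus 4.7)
The plan is to follow the construction given in \cite{looijenga2021monodromy}, where this statement appears as Proposition 3.4, and to combine it with the classification of genus-$10$ curves with $\Acal_5$-symmetry obtained in \cite{zi2021geometry}. First I would verify that $J_\tau$ is a bona fide holomorphic atlas on $\Sigma$. Each open decagonal face of $\hat\Sigma$, as well as an open neighbourhood of any edge of edge type, inherits a Euclidean complex chart from $I_{\Rds}$. The antipodal identification along the boundary triangles of $\hat\Sigma$ is implemented by an orientation-reversing Euclidean isometry, which becomes a holomorphic transition after flipping orientation on one side. The only thing left to check is that at each identified $0$-cell of $\Sigma$ the total interior angle of the adjacent faces equals $2\pi$; this holds because the truncation length $\tau$ is chosen $\Isr$-equivariantly, so the three incident decagonal corners at such a vertex are congruent.

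Next I would compute the genus by a direct Euler-characteristic count: after antipodal identification $\Sigma$ carries $12$ decagonal $2$-cells, $30$ $1$-cells of edge type together with $30$ of truncation type (each obtained by identifying two boundary edges of $\hat\Sigma$), and $30$ $0$-cells (each obtained by identifying two boundary vertices of $\hat\Sigma$). Thus $\chi(\Sigma)=12-60+30=-18$, so $g=10$. The $\Isr$-action on $\tilde\Sigma$ commutes with both the truncation and the antipodal map, hence descends to a faithful holomorphic action on $(\Sigma,J_\tau)$.

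To identify $(\Sigma,J_\tau)$ as a member of the Winger pencil I would invoke the main theorem of \cite{zi2021geometry}: every smooth genus-$10$ curve with faithful $\Acal_5$-symmetry is isomorphic to a smooth fibre of $\Wsr_{\Bsr}\to \Bsr$. The Riemann surface $(\Sigma,J_\tau)$ also admits an anti-holomorphic involution, induced from a reflection of $I_{\Rds}$ preserving $\tilde\Sigma$, so the corresponding parameter $t(\tau)\in\Bsr$ is real. Continuous dependence of the glued complex structure on $\tau$ then provides a continuous map $\gamma:[0,1]\to\Bsr(\Rds)$.

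Finally I would analyse the two endpoints. As $\tau\to 0$ each truncated triangle collapses to a point, pinching every cycle $\delta_x\in\Delta_\trc$ to a node and producing an irreducible $\Isr$-invariant nodal curve with exactly $10$ nodes; by the list of singular fibres recalled after equation $(1)$ this must be the fibre at $t=\frac{27}{5}$. As $\tau$ increases to its maximal value (where adjacent truncation triangles meet) the decagons collapse and the edge-type cycles $\delta_y\in\Delta_\edge$ shrink, yielding the reducible limit $t=\infty$, i.e.\ the union of six lines. By connectedness $\gamma$ then traverses the closed real interval $[\frac{27}{5},\infty]$ and maps $(0,1)$ into $\Bsr^{\circ}$. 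I expect the main obstacle to be exactly this last matching step: one needs to verify that the combinatorial configuration of vanishing cycles produced by the two geometric degenerations indeed coincides with the analytic structure of the two prescribed singular members of $\Wsr_\Bsr$, which relies both on the uniqueness afforded by \cite{zi2021geometry} and on a careful $\Isr$-equivariant count of nodes.
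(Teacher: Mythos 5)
The paper itself gives no proof of this statement: it is quoted as Proposition 3.4 of \cite{looijenga2021monodromy}, and the surrounding text says explicitly that the model is summarized ``without proof.'' So the comparison can only be with the intended argument, and your outline does follow the expected route: a piecewise-Euclidean structure inducing a conformal structure, an Euler-characteristic count (your cell count $12-60+30=-18$ is correct), equivariance and faithfulness of the $\Isr$-action, the classification of smooth genus-$10$ curves with $\Acal_5$-symmetry from \cite{zi2021geometry} to land in the Winger pencil, a real structure to force $t$ into the real locus, and the node counts ($10$ nodes for the truncation degeneration versus the $K_6$-configuration of six lines) to match the two limits with $t=\tfrac{27}{5}$ and $t=\infty$.

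One step, however, fails as written: the claim that the total interior angle at each identified $0$-cell equals $2\pi$. A $0$-cell of $\Sigma$ is the image of a pair $\{p,\iota p\}$ of boundary vertices of $\hat\Sigma$; at $p$ exactly \emph{two} decagons meet (the two faces of $\tilde\Sigma$ adjacent to the edge of $\tilde\Sigma$ on which $p$ lies), not three, and each contributes an interior angle of $180^{\circ}-36^{\circ}=144^{\circ}$ there, since the truncating triangle cuts an isosceles corner off a $108^{\circ}$ pentagon angle --- note this is independent of $\tau$. The glued surface therefore has cone angle $4\times 144^{\circ}=576^{\circ}$ at every $0$-cell for every $\tau$, and $\Isr$-equivariance cannot repair this. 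The conclusion you want is still true, but for a different reason: a piecewise-Euclidean metric on a surface determines a conformal structure away from the cone points which extends uniquely across them (pull back locally by $z\mapsto z^{2\pi/\theta}$). This is exactly how the present paper treats the parallel model $\Pi$ (``It defines a conformal structure $I_\tau$ first on $\Pi$ except the vertices and then it can be extended across these vertices''), and the same argument should be made here. A second, smaller slip: at the $\tau$-maximal end the decagons do not collapse (they shrink to the pentagons of the icosidodecahedron); only the edge-type $1$-cells degenerate, which is what the vanishing-cycle statement actually requires.
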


Moreover it is clear to check that the intersection number of the above 1-cycles are as the following Lemma.
\begin{lem}\label{lemma:intsce}
    The intersection numbers of these $1$-cycles are as follows: any two loops of the same type have intersection number zero and if $x\in \Csr_0(\tilde{\Sigma})$ and $y\in \Csr_1(\tilde{\Sigma})$, then $\la \delta_x, \delta_y\ra =0$ unless $x$ lies on $y$ or on $\iota y$, in which case $\la \delta_x, \delta_y\ra\in\{\pm 1\}$ with the plus sign appearing if and only if $x$ is the end point of $y$.
\end{lem}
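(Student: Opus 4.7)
The plan is to represent each $1$-cycle by a simple closed curve on $\Sigma$ and compute the intersection numbers as signed counts of transverse crossings, using the orientation of $\Sigma$ induced from the outward normal of $\tilde{\Sigma}\subset I_{\Rds}$. The bulk of the work is a disjointness analysis of the cycle supports in $\hat{\Sigma}$ (before antipodal identification), followed by a local orientation computation at the unique intersection point in the non-trivial mixed case.

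First I would verify that same-type cycles have zero intersection. For two truncation cycles $\delta_x$ and $\delta_{x'}$: if $x'\neq\pm x$ the small triangles at $x$ and $x'$ are disjoint in $\hat{\Sigma}$; if $x'=\iota x$ the two triangles are glued in $\Sigma$ and the cycles coincide up to the sign $\delta_{\iota x}=-\delta_x$; and a self-intersection on an oriented surface is automatically zero. For two edge cycles $\delta_y$ and $\delta_{y'}$ with $y'\neq\pm y,\pm \iota y$: in $\tilde{\Sigma}$ the edges $y,y'$ meet only at vertices of $\tilde{\Sigma}$, and these vertices have been removed by the truncation; moreover, at a shared original vertex $x$ the endpoints of $y$ and $y'$ on the small triangle at $x$ sit at distinct vertices of that triangle, so no identification in $\Sigma$ brings them together. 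Hence the supports are disjoint and the intersection vanishes.

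For the mixed case $\la\delta_x,\delta_y\ra$, if $x$ lies on neither $y$ nor $\iota y$, then the triangles at $x$ and $\iota x$ are disjoint from $y\cup\iota y$ and the intersection is zero. Otherwise $x$ is an endpoint of $y$ or of $\iota y$, and $y$ touches the small triangle at $x$ (or at $\iota x$) at exactly one vertex; perturbing the triangle boundary slightly inward produces a single transverse crossing, giving $\la\delta_x,\delta_y\ra=\pm 1$. The sign is determined by comparing the local frame at this crossing, spanned by the tangent to $\delta_x$ (counterclockwise along the triangle as seen from outside $x$) and the tangent to $\delta_y$ (pointing from $\ini(y)$ to $\tm(y)$), with the orientation of $\Sigma$. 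A direct coordinate check shows that the frames agree precisely when $x=\tm(y)$, i.e.\ when $x$ is the end point of $y$; this last orientation bookkeeping is the main subtlety, everything else being a combinatorial consequence of disjointness in $\hat{\Sigma}$.
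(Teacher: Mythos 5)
Your proposal is correct and is essentially the direct verification that the paper leaves implicit (the paper states the lemma is ``clear to check'' and gives no proof): a disjointness analysis of the cycle supports in $\hat{\Sigma}$ for the same-type and non-incident cases, plus a single transverse crossing with a local orientation check in the incident mixed case. The only point worth adding is that when $x$ lies on $\iota y$ rather than on $y$ you should reduce to the previous case via $\delta_{\iota y}=-\delta_y$ (equivalently $\delta_{\iota x}=-\delta_x$), so that the sign rule is stated consistently in terms of $y$ itself.
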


\subsection{Celluar Homology of $\Sigma$}

Recall that the surface $\Sigma$ comes with a cellular decomposition. Hence there exist a natural exact sequence i.e. the cellular decomposition of  $\Sigma$ enables us compute its homology as that of the combinatorial chain complex
\begin{equation}\label{seqC2}
    \xymatrix{
        0\ar[r]&\Csr_2(\Sigma)\ar[r]^{\partial_2} &\Csr_1(\Sigma)\ar[r]^{\partial_1}& \Csr_0(\Sigma)\ar[r]& 0.
    }
\end{equation}
Let us take $B_i(\Sigma):=\Img{(\partial_{i+1})}$ and $Z_i(\Sigma):=\ker(\partial_i)$. Hence we have the following two exact sequences where the second describes the first homology group. The first describes the 1-boundaries namely the 1-boundaries of $\Sigma$ are generated by the boundaries of 2-cells of $\Sigma$ and the sum of all 2-cells has no boundary.
\begin{equation}\label{seqB1}
    \xymatrix{0\ar[r]&\ker\partial_2\ar[r]&\Csr_2(\Sigma)\ar[r]^{\partial_2}&B_1(\Sigma)\ar[r]&0}
\end{equation}
\begin{equation}\label{seqph1}
    \xymatrix{
    0\ar[r]&B_1(\Sigma)\ar[r] &Z_1(\Sigma)\ar[r]^-{p}& H
    _1(\Sigma)\ar[r]& 0
    }
\end{equation}
If we apply the left exact functor $\Hom_{\Zds\Isr} (E_o, -)$ to the above exact sequences, we can get the long exact sequences
\begin{equation}\label{eqn:HomEoP2sig}
    \xymatrix@C=1pc{0\ar[r]&\Hom_{ \Zds\Isr}(E_o,\Csr_2(\Sigma))\ar[r]^{\partial_{2,\ast}}&\Hom_{ \Zds\Isr}(E_o,B_1(\Sigma))\ar[r]&\Ext^{1}_{ \Zds\Isr}(E_o,\Zds)}.
\end{equation}
\begin{equation}\label{eqn:bacicexseqsig}
    \xymatrix@C=1pc{0\ar[r]&\Hom_{ \Zds\Isr}(E_o,B_1(\Sigma))\ar[r]&\Hom_{ \Zds\Isr}(E_o,Z_1(\Sigma))\ar[r]^-{p_*}&\Hom_{ \Zds\Isr}(E_o,H_1(\Sigma))}.
\end{equation}
Note that $\Hom_{\Zds\Isr}(E_o,\Zds)$ must be trivial. Unlike the $V$-case discussed in \cite{looijenga2021monodromy}, the first term of exact sequence \eqref{eqn:bacicexseqsig} is nontrivial. This could be seen from the character computation to the $\Cds\Isr$-module $\Csr_{2,\Cds}(\Sigma):=\Cds\otimes \Csr_2(\Sigma)$ which showed that $\Csr_{2,\Cds}(\Sigma)$ is isomorphic to $\Cds\oplus W\oplus I\oplus I'$ as $\Cds\Isr$-module, where $W$ is the $5$-dimensional irreducible representation.
Then the exact sequence \eqref{eqn:HomEoP2sig} will help to give a explicit description of $\Hom_{\Zds\Isr} (E_o, B_1(\Sigma))$.
For the term $\Hom_{ \Zds\Isr}(E_o,Z_1(\Sigma))$, we can "divide" $\Hom_{\Zds\Isr}(E_o,Z_1(\Sigma))$ into three parts, despite the one from the boundary, the other two parts are $\Hom_{\Zds\Isr}(E_o,Z_{\trc}(\Sigma))$ and $\Hom_{\Zds\Isr}(E_o,Z_{\edge}(\Sigma))$. We will discuss the last two separately. The main theorems of section is the Proposition \ref{OostrZ1}.

Let us first observe that the $\Zds$-module $\Csr_2(\Sigma)/(\iota-1)$ is isomorphic to $E_o$ as $\Zds\Isr$-module where the isomorphism is unique up to a sign and if we choose a system $\Rsr_2(\Sigma)$ of representatives of $\iota$-symmetry on $\Csr_2(\Sigma)$, $\Rsr_2(\Sigma)$ will become the $\Zds$-basis of $E_o$ that we discussed in Lemma \ref{lem:basisEo}.
Hence we may fix one such isomorphism and let $e$ denote not only one of the generators of $E_o$ but also a face in $\Rsr_2(\Sigma)\subset\Csr_2(\Sigma)$.
Moreover for each $z\in \Rsr_2(\Sigma)$, let $h_z\in\Acal_5$ be a permutation (which is not unique) such that $h_z z=-z$ and $\mu_5\cong\Stab(z)\subset \Isr$ be its $\Isr$-stabilizer.
%We will fix an element $h_z\in\Acal_5$ cyclic of order two, which is not uniquely determined, such that $h_z z=-z$ with $z\in \Rsr_2(\Sigma)$. And we will denote by $\Stab(z)$, the $\Isr$-stabilizer of $z$ where $z\in \Rsr_2(\Sigma)$.

To see this observation, recall that the $2$-cells i.e. the faces of $\Sigma$ can be canonically oriented clockwisely. Clearly they are bijectively indexed by the faces of $\tilde{\Sigma}$. The set $\Csr_2(\Sigma)$ of oriented 2-cells of $\Sigma$ admits both $\Isr$-symmetry and $\iota$-symmetry, Note that the $\Isr$-symmetry keeps the orientation and permutes the 12 faces, hence the stabilizer for each 2-cell is cyclic order five. The $\iota$-symmetry commutes with $\Isr$-action and will reverse the orientation.
Hence it is clear that the $\Zds$-module $\Csr_2(\Sigma)/(\iota-1)$ is isomorphic to $E_o$ as $\Zds\Isr$-module. If we choose a system $\Rsr_2(\Sigma)$ of representatives of $\iota$-symmetry on $\Csr_2(\Sigma)$, $\Rsr_2(\Sigma)$ will become the $\Zds$-basis of $E_o$ as we claimed above.
From the Corollary \ref{autgoEo}, such isomorphism is unique up to sign.

Let us first construct the morphisms $E_o\to\Csr_2(\Sigma)$ and $E_o\to B_1(\Sigma)$. Let $e\in\Rsr_2(\Sigma)$ be as above, the elements $\delta_{\cel}:=(e+\iota e)$ and $\delta'_{\cel}:=\sum_{z\in\Rsr_2(\Sigma)\backslash\{e\}}(z+\iota z)$ are $\Stab(e)$-invariant and $h_e\delta_{\cel}=-\delta_{\cel}$ resp. $h_e\delta'_{\cel}=-\delta'_{\cel}$. Then the elements $\delta_{\bound}:=\partial \sigma_{\cel}$ and $\delta'_{\bound}:=\frac{1}{2}\partial(\delta_{\cel}+\delta'_{\cel})$ are also $\Stab(e)$-invariant and signature reversal by $h_e$. Since $\delta'_{\bound}$ is the boundary of $\sum_{z\in\Rsr_2(\Sigma)}z$, the two elements $\delta_{\bound}$ and $\delta'_{\bound}$ both lie in $B_1(\Sigma)$.
Therefore we may define the $\Isr$-morphisms $E_o\to\Csr_2(\Sigma)$ by $\sigma_{\cel}:e\to \delta_{\cel}$ resp. $\sigma'_{\cel}:e\to \delta'_{\cel}$ and $\Isr$-morphisms $E_o\to B_1(\Sigma)$ by $\sigma_{\bound}:e\to \delta_{\bound}$ resp. $\sigma'_{\bound}:e\to \delta'_{\bound}$.

%Hence both the $\Acal_5$-orbit of $\delta_{\cel}$ and $\delta'_{\cel}$ consists of $6$ elements.
%On the other hand the element  also satisfies the above properties.
%It is clear that the endomorphism of $E_o$ acts on $\Hom_{\Zds\Isr}(E_o,\Csr_2(\Sigma))$ by precomposition, their action on the two morphisms are as following:
%Now we could give morphisms $E_o\to B_1(\Sigma)$ from these elements. 
%Let us define  It is clear that they are both $\Stab(e)$-invariant and $h_e$ will invert their signature. Both of the two elements define . It is clear to check the $X$-action is given as following

\begin{rmk}\label{nontrirep}
    We claim that $\sigma'_{\bound}\in\Hom_{\Zds\Isr}(E_o,B_1)$ is an element not coming from the image of $\Hom_{\Zds\Isr}(E_o,\Csr_2(\Sigma))$. If we assume the contrary that $\sigma'_{\bound}\in\Img\Hom_{\Zds\Isr}(E_o,\Csr_2(\Sigma))$, the $\Stab(e)$-invariance and $(h_e+1)$-invariance of $\sigma'_{\bound}(e)$ implies that $\sigma'_{\bound}$ must be the image of the following morphisms
    \begin{equation*}
        e \to \frac{1}{2}\sum_{z\in \Rsr_2(\Sigma)}(z+\iota z)
    \end{equation*}
    However it is clear that $\frac{1}{2}\sum_{z\in \Rsr_2(\Sigma)}(z+\iota z)$ is not an element of $\Csr_2(\Sigma)$. This is a contradiction!
\end{rmk}

We have proved in \cite{looijenga2021monodromy} that $Z_1(\Sigma)$ contains two disjoint factors $Z_{\trc}(\Sigma)$ and $Z_{\edge}(\Sigma)$. We will deal with the two factors separately and begin with the $E_o$-copy in $Z_{\trc}(\Sigma)$.
Recall that the 2-cell $e$ is a 10-gon which is modified from $\tilde{e}$, a regular pentagon on $\tilde{\Sigma}$. And $\tilde{e}$ has five vertices, the set of theses vertices is a $\Stab{(e)}$-orbit.
On the other hand, let $V_e\subset \Csr_0(\Sigma)$ be the set of terminal points of oriented edges $y\in \Csr_1(\tilde{\Sigma})$ who are not parallel to $\tilde{e}$ and have initial points on $\tilde{e}$ . This is a 5-element set and is a single $\Stab(e)$-orbit in $\Csr_0(\tilde{\Pi})$.
Note that despite the five vertices on $\tilde{e}$ and five vertices on $\iota\tilde{e}$, there are ten vertices of $\tilde{\Sigma}$ that do not lie on neither $\tilde{e}$ nor $\iota \tilde{e}$. They are the points of $V_e$ and $\iota V_e$.
Hence we may take the 5-elements sum $\delta_{\trc}:=\sum_{x\in \tilde{e}}\delta_x$ and $\delta'_{\trc}:=\sum_{x\in V_e}\delta_x$. They are $\Stab(e)$-invariant and satisfying that $h_e\delta_{\trc}=-\delta_{\trc}$ resp.$h_e\delta'_{\trc}=-\delta'_{\trc}$.
Therefore they define two $\Isr$-equivariant homomorphism $\sigma_{\trc}$ resp. $\sigma'_{\trc}$ from $E_o$ to $Z_{\trc}(\Sigma)$ by taking $\sigma_{\trc}(e):=\delta_{\trc}$ resp. $\sigma'_{\trc}(e):=\delta'_{\trc}$.
%If we take , it is $\Stab(e)$-invariant and $h_e\delta_{\trc}=-\delta_{\trc}$.
%The endomorphism $X$ acts on the elements as following
%Then the vertices of $\tilde{\Sigma}$ is divided into four $\Stab(\tilde{e})\cong\Stab{(e)}$-orbits.
%the following 5-element set is also a $\Stab(e)$-orbit in $\Csr_0(\tilde{\Pi})$ namely
%\begin{equation}\label{SigspecVer}
%    V_e:=\{x=\tm(y)\in\tilde{\Sigma}:y\in \Csr_1(\tilde{\Sigma}),\ \ini(y)\in e\}\cap e^{c}
%\end{equation}
%Here $\tm(y)$ stands for the terminal point of 1-cell $y\in\Csr_1(\tilde{\Sigma})$ and $\ini(y)$ for the initial point of $y$.

%Recall that $\tilde{\Sigma}$ has 30 edges and 60 oriented edges, they are divided into $6$ $\Stab{(e)}$-orbits resp. $12$ $\Stab(e)$-orbits. 
For the 2-cell $e$, the following two subsets of $\Csr_1(\tilde{\Sigma})$ have special interest to us
\begin{equation*}
    \begin{aligned}
        E_e  & := & \{y\in\Csr_1(\tilde{\Sigma}) & :\ini(y)\in e,\ \tm(y)\notin e        & \} \\
        E'_e & := & \{y\in\Csr_1(\tilde{\Sigma}) & :\ini(y)\in V_e,\ \tm(y)\in \iota V_e & \}
    \end{aligned}
\end{equation*}
Note that $E_e$ is a 5-elements set which is $\Stab{(e)}$-invariant and $E'_e$ is a $10$-elements set which is $\Stab(e)\times\iota$-invariant.
Each of the set defines a 1-cycle of edge type namely $\delta_{\edge}:=\sum_{y\in E_e}\delta_y$ and $\delta'_{\edge}:=\frac{1}{2}\sum_{y\in E'_e}\delta_y$. From the construction both of the elements are $\Stab(e)$ stable and signature reversal by $h_e$.
Hence the $\Isr$-orbit of $\delta_{\edge}$ resp. $\delta'_{\edge}$ has 12 elements and comes into 6 antipodal pairs.
Therefore each of the two elements defines an equivariant homomorphism $\sigma_{\edge}: E_o\to Z_{\edge}(\Sigma)$ resp. $\sigma'_{\edge}: E_o\to Z_{\edge}(\Sigma)$  with $\sigma_{\edge}(e)=\delta_{\edge}$ resp. $\sigma'_{\edge}(e)=\delta'_{\edge}$.
%Similarly the endomorphism $X$ has a natural action on $\Hom_{\Zds\Isr}(E_o,Z_{\edge}(\Sigma))$ by taking pre-composition on the right. The symmetry is given as following
%In particular this shows that the rank of $\Zds\Isr[\delta_{\edge}]$ resp. $\Zds\Isr[\delta'_{\edge}]$ is at most 6. Since $E_o$ is an irreducible $\Qds\Isr$-module, the rank of $\Zds\Isr[\delta_{\edge}]$ resp. $\Zds\Isr[\delta'_{\edge}]$ is exactly six.

\begin{rmk}\label{Nodelta1}
    Let us consider the following sum in $Z_{\edge}(\Sigma)$
    \begin{equation*}
        \sum_{y\subset e\hbox{\ and oriented by $e$}}\delta_y
    \end{equation*}
    It is $\Stab(e)$-invariant. However from the properties of $\tilde{\Sigma}$, this element is $h_e$-invariant. Moreover the sum over the $\Isr$-orbit of this elment is zero. This is because each closed loop of edge type appears twice in this sum with opposite orientation. Hence this is a copy of $W_o$, the 5-dimensional permutation representation of $\Acal_5$.
\end{rmk}

\begin{prop}\label{SigC2B1ZeZtgen}
    The $\Zds$-modules $\Hom_{\Zds\Isr}(E_o,\Csr_2(\Sigma))$, $\Hom_{\Zds\Isr}(E_o,B_1(\Sigma))$, $\Hom_{\Zds\Isr}(E_o,Z_{\edge}(\Sigma))$ and $\Hom_{\Zds\Isr}(E_o,Z_{\trc}(\Sigma))$ are both free $\Zds$-modules of rank two. Moreover they are both $\Ocal_o$-modules where
    \begin{enumerate}
        \item $\Hom_{\Zds\Isr}(E_o,\Csr_2(\Sigma))$ is isomorphic to $\Ocal_o$ a free $\Ocal_o$-module of rank one with \begin{equation}\label{Xsigcel}
                  \begin{aligned}
                      X\sigma_{\cel}  & = & \sigma'_{\cel} \\
                      X\sigma'_{\cel} & = & 5\sigma_{\cel}
                  \end{aligned}
              \end{equation}
        \item $\Hom_{\Zds\Isr}(E_o,B_1(\Sigma))$ is isomorphic to $\Ocal$ with \begin{equation}\label{Xsigbound}
                  \begin{aligned}
                      X\sigma_{\bound}  & = & -\sigma_{\bound}+2\sigma'_{\bound} \\
                      X\sigma'_{\bound} & = & 2\sigma_{\bound}+\sigma'_{\bound}
                  \end{aligned}
              \end{equation} Hence it contains the image of $\Hom_{\Zds\Isr}(E_o,\Csr_2(\Sigma))$ as a submodule of index two,
        \item $\Hom_{\Zds\Isr}(E_o,Z_{\trc}(\Sigma))$ is a free $\Ocal_o$-module of rank one with\begin{equation}\label{Xsigtrc}
                  \begin{aligned}
                      X\sigma_{\trc}  & = & 2\sigma_{\trc}+\sigma'_{\trc} \\
                      X\sigma'_{\trc} & = & \sigma_{\trc}-2\sigma'_{\trc}
                  \end{aligned}
              \end{equation}
        \item $\Hom_{\Zds\Isr}(E_o,Z_{\edge}(\Sigma))$ is isomorphic to $\Ocal$ with \begin{equation}\label{Xsigedge}
                  \begin{aligned}
                      X\sigma_{\edge}  & = & \sigma_{\edge}+2\sigma'_{\edge} \\
                      X\sigma'_{\edge} & = & 2\sigma_{\edge}-\sigma'_{\edge}
                  \end{aligned}
              \end{equation}
    \end{enumerate}
\end{prop}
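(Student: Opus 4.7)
The plan is to prove each of the four parts along a common three-stage pattern: establish the $\Qds$-rank by character theory, compute the action of $X\in\Ocal_o$ on the two given generators by transporting the structure formula $X(e)=e_0+\cdots+e_4$ through each $\sigma$, and finally verify that the two generators span the $\Zds$-module rather than a finite-index sublattice. For the rank, I would complexify and use the decomposition $\Csr_{2,\Cds}(\Sigma)\cong\Cds\oplus W\oplus I\oplus I'$ already quoted, together with analogous character decompositions for $B_1$, $Z_{\trc}$, and $Z_{\edge}$ (obtained by counting $\Isr$-orbits of 2-cells and of truncation- and edge-type 1-cycles). In each case exactly one copy of $E=I\oplus I'$ appears, so $\Hom_{\Cds\Isr}(E,\cdot)$ is two-dimensional over $\Cds$; since $\Kcal=\Qds(\sqrt 5)$ acts, the integral Hom-module is a torsion-free $\Ocal_o$-module of $\Kcal$-rank one, i.e.\ a fractional $\Ocal_o$-ideal in $\Kcal$.

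For the $X$-action the key identity is
\[
  (X\cdot\sigma)(e)=\sigma(Xe)=\sum_{i=0}^{4}\sigma(e_i)=\sum_{i=0}^{4}\sigma_5^{i}\sigma_2\cdot\sigma(e),
\]
which reduces every such computation to summing the five $\sigma_5$-rotates of $\sigma_2\cdot\sigma(e)$. In case~(1), $\sigma_{\cel}(e)=e+\iota e$ and the orbit sum is precisely $\sum_{z\in\Rsr_2(\Sigma)\setminus\{e\}}(z+\iota z)=\delta'_{\cel}$, giving $X\sigma_{\cel}=\sigma'_{\cel}$; then $X^{2}=5\,\Id$ forces $X\sigma'_{\cel}=5\sigma_{\cel}$. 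Since $\partial_2$ is $\Isr$-equivariant and $\sigma_{\bound}=\partial_2\circ\sigma_{\cel}$, $\sigma'_{\bound}=\tfrac12\,\partial_2\circ(\sigma_{\cel}+\sigma'_{\cel})$, the formulas in~(2) follow by applying $\partial_2$ to those in~(1) and re-expressing the results in the basis $\{\sigma_{\bound},\sigma'_{\bound}\}$. In~(3) and~(4) the orbit sums $\sum_i\sigma_5^{i}\sigma_2\,\delta_{\trc}$ and $\sum_i\sigma_5^{i}\sigma_2\,\delta_{\edge}$ are evaluated combinatorially on $\tilde\Sigma$, using the incidence of the vertices in $\tilde e$ and $V_e$ (respectively the edges in $E_e$ and $E'_e$) to recognise the result as the stated $\Zds$-combination of $\sigma_{\trc},\sigma'_{\trc}$ (respectively $\sigma_{\edge},\sigma'_{\edge}$).

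For the module structure and integral generation, in~(1) the relation $X\sigma_{\cel}=\sigma'_{\cel}$ places both generators inside $\Ocal_o\cdot\sigma_{\cel}$; this must be the whole module because the only larger fractional $\Ocal_o$-ideal in $\Kcal$ is $\Ocal$, and $(\sigma_{\cel}+\sigma'_{\cel})/2$ would send $e$ to $\tfrac12\sum_{w\in\Csr_2(\Sigma)}w$, which is not integral in $\Csr_2(\Sigma)$. Hence $\Hom_{\Zds\Isr}(E_o,\Csr_2(\Sigma))=\Ocal_o\sigma_{\cel}\cong\Ocal_o$. In~(2), Remark~\ref{nontrirep} shows $\sigma'_{\bound}$ is not in the image of $\Hom_{\Zds\Isr}(E_o,\Csr_2(\Sigma))$, yet $X\sigma_{\bound}=-\sigma_{\bound}+2\sigma'_{\bound}$ gives $\sigma'_{\bound}=\tfrac{X+1}{2}\sigma_{\bound}=Y\sigma_{\bound}$, so the module contains $\Ocal\sigma_{\bound}$ and sits in the index-two overorder of the cellular image, forcing equality with $\Ocal\sigma_{\bound}\cong\Ocal$. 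Cases~(3) and~(4) are analogous: $\sigma'_{\trc}=(X-2)\sigma_{\trc}$ and $\sigma'_{\edge}=(Y-1)\sigma_{\edge}$ respectively. To rule out that the actual module is strictly larger than these, I would apply Lemma~\ref{genhom} to the unimodular lattice $H_1(\Sigma)$ with its intersection pairing, using Lemma~\ref{lemma:intsce} to exhibit, for each coprime pair $(\alpha,\beta)$, an integral cycle pairing to $\pm 1$ with $\alpha\sigma(e)+\beta\sigma'(e)$.

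The main obstacle lies in the second stage for cases~(3) and~(4): the orbit sums $\sum_i\sigma_5^{i}\sigma_2\,\delta_{\trc}$ and $\sum_i\sigma_5^{i}\sigma_2\,\delta_{\edge}$ require a careful bookkeeping of where the rotations send each element of $\tilde e$, $V_e$, $E_e$, $E'_e$ on $\tilde\Sigma$, so as to reassemble the resulting $\Zds$-linear combinations with exactly the coefficients in \eqref{Xsigtrc} and \eqref{Xsigedge}; once these are in hand, identifying the module structure and verifying generation via Lemma~\ref{genhom} are automatic consequences of the $X$-action matrix.
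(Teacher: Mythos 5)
Your first two stages match the paper: the freeness and rank claims follow exactly as you say (the paper cites principality of $E_o$ and the $\Qds$-dimension of the rationalized Hom-spaces), and your identity $(X\cdot\sigma)(e)=\sum_{i=0}^{4}\sigma_5^{i}\sigma_2\cdot\sigma(e)$ is a correct and more explicit organization of what the paper dismisses as ``direct computation''; the resulting matrices, and the unit relations $\sigma'_{\trc}=(X-2)\sigma_{\trc}$, $\sigma'_{\bound}=Y\sigma_{\bound}$, $\sigma'_{\edge}=(Y-1)\sigma_{\edge}$, are all right. The genuine gap is in your third stage, the proof that the two named homomorphisms generate the full Hom-module over $\Zds$ rather than a finite-index sublattice. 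First, the assertion that ``the only larger fractional $\Ocal_o$-ideal in $\Kcal$ is $\Ocal$'' is false: $\Ocal_o$ is properly contained in infinitely many fractional ideals ($\tfrac1n\Ocal_o$, $\tfrac{1}{X}\Ocal_o$, $\tfrac12\Ocal$, \dots), so checking that $(\sigma_{\cel}+\sigma'_{\cel})/2$ is non-integral rules out only one of them. Second, for case (2) the claim that $\Hom_{\Zds\Isr}(E_o,B_1(\Sigma))$ ``sits in the index-two overorder of the cellular image'' is exactly what needs to be proved and is nowhere justified; a priori the cokernel of $\Hom_{\Zds\Isr}(E_o,\Csr_2(\Sigma))\to\Hom_{\Zds\Isr}(E_o,B_1(\Sigma))$ is only bounded by $\Ext^1_{\Zds\Isr}(E_o,\Zds)$, which you do not compute. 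Third, your proposed fallback --- Lemma~\ref{genhom} applied to the unimodular lattice $H_1(\Sigma)$ --- cannot close these gaps: elements of $\Hom_{\Zds\Isr}(E_o,\Csr_2(\Sigma))$ are $2$-chains and elements of $\Hom_{\Zds\Isr}(E_o,B_1(\Sigma))$ become zero in $H_1(\Sigma)$, so the intersection pairing sees neither; and even for $Z_{\trc}$ and $Z_{\edge}$ the lemma as stated requires the $\phi_i$ to form a full $\Qds$-basis of $\Hom_{\Qds\Isr}(E_\Qds,H_1(\Sigma;\Qds))$, which is four-dimensional, whereas you offer only two maps (a divisibility variant of the argument does work there, but that is not the lemma you invoke, and it still leaves (1) and (2) open).

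The repair is the elementary device the paper actually uses: $\Csr_2(\Sigma)$ and $\Csr_1(\Sigma)$ are free $\Zds$-modules on the cells, so a rational combination $a\sigma+a'\sigma'$ lies in the integral Hom-module if and only if the coefficient of every individual cell in $a\sigma(e)+a'\sigma'(e)$ is an integer. Since $\delta_{\cel}$ and $\delta'_{\cel}$ (resp. $\delta_{\bound}$ and $\delta'_{\bound}$, $\delta_{\trc}$ and $\delta'_{\trc}$, $\delta_{\edge}$ and $\delta'_{\edge}$) are supported on disjoint sets of cells --- or, in the edge/boundary cases, one reads off $a$ and $a'$ from two suitably chosen $1$-cells --- this forces $a,a'\in\Zds$ directly in all four cases, with no appeal to the classification of overmodules of $\Ocal_o$ and no intersection pairing. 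With that substitution your argument becomes complete and coincides with the paper's.
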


\begin{proof}
    Since $E_o$ is principal $\Zds\Isr$-module and $\Csr_2(\Sigma)$, $B_1(\Sigma)$, $Z_{\edge}(\Sigma)$ and $Z_{\trc}(\Sigma)$ are free $\Zds$-module, $\Hom_{\Zds\Isr}(E_o,\Csr_2(\Sigma))$, $\Hom_{\Zds\Isr}(E_o,B_1(\Sigma))$, $\Hom_{\Zds\Isr}(E_o,Z_{\edge}(\Sigma))$ and $\Hom_{\Zds\Isr}(E_o,Z_{\trc}(\Sigma))$ are both free $\Zds$-modules. The rank are clear from the $\Qds$-dimension of their $\Qds$-extension.

    \textbf{(Claim 1 and Claim 2):} It is clear from the computation that the Equations \eqref{Xsigcel} and \eqref{Xsigbound} hold. Hence $\Hom_{\Zds\Isr}(E_o,\Csr_2(\Sigma))$ is isomorphic to the module $\Ocal_o[\sigma_{\cel}]$, $\Hom_{\Zds\Isr}(E_o,B_1(\Sigma))$ is isomorphic to $\Ocal$ and the image of the module $\Hom_{\Zds\Isr}(E_o,\Csr_2(\Sigma))$ is contained in $\Hom_{\Zds\Isr}(E_o,B_1(\Sigma))$ as a submodule of index at least two. We claim that $\Hom_{\Zds\Isr}(E_o,B_1(\Sigma))$ is generated by $\sigma_{\bound}$ and $\sigma'_{\bound}$ as $\Zds$-module. Then all the assertions in the proposition implies from this claim. Let us assume the contrary that there exist a map $v\in\Hom_{\Zds\Isr}(E_o, B_1(\Sigma))$, such that $v$ is not generated by $\sigma_{\bound}$ and $\sigma'_{\bound}$ over $\Zds$. However we can find rational numbers $a$ and $a'$ with at least one of them is not an integer, such that $v=a\sigma_{\bound}+a'\sigma'_{\bound}$, since $\Hom_{\Zds\Isr}(E_o, B_1(\Sigma))$ is rank two and $\sigma_{\bound}$ and $\sigma'_{\bound}$ are not linearly equivalent. Then by counting the 1-cells of edge type on $\Sigma$, we find that both $a$ and $a'$ are integers, which is a contradiction.

    \textbf{(Claim 3 and Claim 4):} The direct computation shows that the Equations \eqref{Xsigedge} and \eqref{Xsigtrc} hold. Then from the construction and Remark \ref{Nodelta1}, $\Hom_{\Zds\Isr}(E_o,Z_{\trc}(\Sigma))$ is isomorphic to $\Ocal_o[\sigma_{\trc}]$ and $\Hom_{\Zds\Isr}(E_o,Z_{\edge}(\Sigma))$ is isomorphic to $\Ocal$.
\end{proof}

The following Lemma gave the intersection number between the class defined above and the vanishing cycles of the two degenerations.
Without loss of generality, Let $\Rsr_0(\tilde{\Sigma})$ be a systems of representatives of $\iota$-symmetry on $\Csr_0(\tilde{\Sigma})$ consists of the vertices of $e$ and the elements of $V_e$. And let $\Rsr_1(\tilde{\Sigma})$ be a system of representatives of $\iota\times(-1)$-action on $\Csr_1(\tilde{\Sigma})$ such that each element $y$ has initial point in $\Rsr_0(\tilde{\Sigma})$. Therefore the quantity of $\Rsr_0(\tilde{\Sigma})$ is 10 and $\Rsr_1(\tilde{\Sigma})$ is 15.
%We may assume that 
%where $V_e$ is defined as above and $V_e$ is the set of vertices of 2-cell $\tilde{e}$. It is clear that this .

\begin{lem}\label{speint}
    Let $e$, $\Delta_{\edge}$, $\Delta_{\trc}$, $\sigma_{\trc}$, $\sigma'_{\trc}$, $\sigma_{\edge}$ and $\sigma'_{\edge}$ be as defined above. Then the class $[\sigma_{\edge}(e)]$ and $[\sigma'_{\edge}(e)]$ has zero intersection number with the elements of $\Delta_\edge$ resp.\  $[\sigma_{\trc}(e)]$ and $[\sigma'_{\trc}(e)]$ has zero intersection number with the elements of $\Delta_\trc$, whereas for $x\in \Rsr_0(\tilde{\Sigma})$ resp. $y\in \Rsr_1(\tilde{\Sigma})$,
    \begin{equation*}
        \begin{aligned}
             & \la [\sigma_{\edge}(e)],[\delta_x]\ra=
            \begin{cases} 1   & \text{if\ }  x\in e,  \\
              -1, & \text{if\ } x\in V_e. \\
            \end{cases}
            \\
             & \la [\sigma'_{\edge}(e)],[\delta_x]\ra=
            \begin{cases} 0  & \text{if  }  x\in e,  \\
              2, & \text{if }  x\in V_e.
            \end{cases}
            \\
             & \la [\sigma_{\trc}(e)],[\delta_y]\ra=
            \begin{cases}
                -1 & \text{if }\ini(y)\in e \hbox{ and }\tm(y)\in V_e, \\
                0, & \text{otherwise.}                                 \\
            \end{cases}
            \\
             & \la [\sigma'_{\trc}(e)],[\delta_y]\ra=
            \begin{cases}
                1  & \text{if }  \ini(y)\in e    \hbox{ and }\tm(y)\in V_e,      \\
                -2 & \text{if }  \ini(y)\in V_e \hbox{ and }\tm(y)\in \iota V_e, \\
                0, & \text{otherwise.}                                           \\
            \end{cases}
        \end{aligned}
    \end{equation*}
\end{lem}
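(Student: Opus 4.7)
The proof reduces in each of the four cases to expanding the class $\sigma_\bullet(e)$ as the explicit sum of basis cycles given in its construction, and then invoking Lemma \ref{lemma:intsce} term by term. The same-type vanishing statements are immediate: $\sigma_\edge(e)$ and $\sigma'_\edge(e)$ are by definition integer linear combinations of 1-cycles of edge type, while every element of $\Delta_\edge$ is also of edge type, so Lemma \ref{lemma:intsce} forces the intersection to be zero. The identical remark applied to the truncation classes handles the $\Delta_\trc$ half.

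For the cross-type pairings I would proceed uniformly: write $\la [\sigma_\bullet(e)],[\delta_\ast]\ra$ as a sum of elementary pairings $\la\delta_y,\delta_x\ra$, use the antisymmetry $\la\delta_y,\delta_x\ra=-\la\delta_x,\delta_y\ra$ together with the sign prescription of Lemma \ref{lemma:intsce}, and count incidences via the layered structure of the $20$ dodecahedron vertices as $\tilde{e}\sqcup V_e\sqcup \iota V_e\sqcup \iota\tilde{e}$, together with the corresponding classification of the $30$ edges into edges of $\tilde{e}$, the set $E_e$, the set $E'_e$, and their $\iota$-images. For instance, for $\sigma_\edge(e)=\sum_{y\in E_e}\delta_y$ and $x\in\tilde{e}$, exactly one edge of $E_e$ is incident to $x$ (the unique outgoing one), contributing $+1$; for $x\in V_e$, again exactly one edge of $E_e$ is incident, this time as its terminal vertex, contributing $-1$. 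For $\sigma'_\edge(e)=\tfrac{1}{2}\sum_{y\in E'_e}\delta_y$ and $x\in V_e$, each such $x$ is the initial vertex of two edges of $E'_e$ and is simultaneously the $\iota$-image of the terminal vertex of two further edges; the prefactor $\tfrac{1}{2}$ together with the sign computation produces the stated value $2$. The two truncation classes are treated dually by fixing an oriented edge $y\in\Rsr_1(\tilde{\Sigma})$ and counting which vertices of $\tilde{e}$ or $V_e$ lie on $y$ or on $\iota y$; the four possible layerings for the endpoints of $y$ recover the four subcases in the lemma's statement.

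The principal source of care is sign bookkeeping: Lemma \ref{lemma:intsce} states the convention only for $\la\delta_x,\delta_y\ra$, so every computation above requires a transposition, and contributions from the antipodal edge require remembering the relation $\delta_{\iota y}=-\delta_y$. Once this is set straight, each of the stated intersection numbers follows from counting at most a handful of vertex-edge incidences in the dodecahedron graph, so the argument is routine though somewhat tedious.
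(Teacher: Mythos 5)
Your proposal is correct and follows the same route as the paper, which simply declares the lemma ``clear from the definitions'' with a pointer to the figure; you have merely written out the incidence and sign bookkeeping (transposing the convention of Lemma \ref{lemma:intsce} and tracking $\delta_{\iota y}=-\delta_y$) that the paper leaves implicit. The vertex layering $\tilde{e}\sqcup V_e\sqcup\iota V_e\sqcup\iota\tilde{e}$ and the resulting incidence counts you describe do reproduce all the stated values, so no further justification is needed.
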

\begin{proof}
    This is clear from the definitions (see also Figure \ref{trdo}).
\end{proof}

\begin{lem}
    The element $(\delta'_{\bound}+\delta'_{\edge}+\delta_{\trc}+\delta'_{\trc})$ is divisible by 2 in $Z_1(\Sigma)$. In particular the class $[\delta'_{\edge}+\delta_{\trc}+\delta'_{\trc}]$ is a boundary in $H_1(\Sigma,\Zds/2)$.
\end{lem}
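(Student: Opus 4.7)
The plan is to prove divisibility by $2$ by working modulo $2$ in $\Csr_1(\Sigma)$ and checking that every $1$-cell of $\Sigma$ appears in $\delta'_{\bound}+\delta'_{\edge}+\delta_{\trc}+\delta'_{\trc}$ with an even coefficient. Since the sum is a $1$-cycle and $\Csr_0(\Sigma)$ is torsion free, this is equivalent to the divisibility claim in $Z_1(\Sigma)$, and the second assertion then follows tautologically: $\delta'_{\bound}$ is by construction a $1$-boundary, so the congruence $\delta'_{\edge}+\delta_{\trc}+\delta'_{\trc}\equiv\delta'_{\bound}\pmod{2}$ implied by the first gives $[\delta'_{\edge}+\delta_{\trc}+\delta'_{\trc}]=[\delta'_{\bound}]=0$ in $H_1(\Sigma;\Zds/2)$.

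The first step is to rewrite $\delta'_{\bound}$ as the boundary of an explicit $2$-chain. With the canonical clockwise orientation the sum of all twelve decagons is a fundamental class of $\Sigma$, so $\partial\sum_{z\in\Csr_2(\Sigma)}z=0$, and combined with $\iota z=-z'$ this gives $\delta'_{\bound}=\tfrac{1}{2}\partial(\delta_\cel+\delta'_\cel)=\partial A$, where $A:=\sum_{z\in\Rsr_2(\Sigma)}z$. With the fixed isomorphism $\Csr_2(\Sigma)/(\iota-1)\cong E_o$ compatible with Lemma~\ref{lem:basisEo}, $\Rsr_2(\Sigma)$ is the $\mu_5$-invariant "top half" $\{e,A_1,\dots,A_5\}$ consisting of $e$ together with its five neighbouring decagons, whose antipodal set is the "bottom half" $\{\iota e,B_1,\dots,B_5\}$. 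The mod-$2$ reductions then follow by combinatorial bookkeeping: (a)~every truncation $1$-cell appears in $\partial A$, because its two adjacent decagons are antipodal and so exactly one lies in the top half, while among edge-type cells only the ten edges between $V_e$ and $\iota V_e$ survive (all other edge-type cells have both adjacent decagons on the same side of the top/bottom partition); (b)~the identity $\delta_{-\iota y}=\delta_y$ gives $\delta'_{\edge}=\sum_{\text{5 reps}}\delta_y$, and since $\delta_y=y-\iota y$ reduces mod $2$ to $y+\iota y$, the sum over the five antipodal pairs of $E'_e$ accounts exactly for the ten edges between $V_e$ and $\iota V_e$; (c)~$\delta_\trc+\delta'_\trc=\sum_{x\in\tilde e\cup V_e}\delta_x$ ranges over one vertex from each of the ten $\iota$-pairs of vertices, and each of the ten identified triangles in $\Sigma$ carries three distinct truncation $1$-cells, so this sum hits all thirty of them. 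Adding the four contributions, the ten $V_e$-$\iota V_e$ edges and the thirty truncation cells each appear with total coefficient $2$, and the total vanishes mod $2$.

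The main obstacle is the careful accounting of the two-to-one adjacency structure of $\Sigma$ from Section~\ref{Sig}: that every truncation $1$-cell has an antipodal pair of neighbouring decagons, that at each vertex $x\in V_e$ exactly two of the three incident decagons lie in the top half and one in the bottom, and that the ten identified triangles partition the thirty truncation $1$-cells into two sets of fifteen, one set of edges lying on $\tilde e$-triangles and the other on $V_e$-triangles. Once these adjacency patterns are laid out, each of the four mod-$2$ reductions above is immediate.
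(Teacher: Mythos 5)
Your proof is correct and takes essentially the same route as the paper's: a direct cell-by-cell computation of the coefficients of $\delta'_{\bound}+\delta'_{\edge}+\delta_{\trc}+\delta'_{\trc}$ in $\Csr_1(\Sigma)$, using $\delta'_{\bound}=\partial\bigl(\sum_{z\in\Rsr_2(\Sigma)}z\bigr)$ and the top-half/bottom-half partition of the decagons. The only difference is presentational --- the paper exhibits an explicit half-chain in one sentence, while you verify evenness of every coefficient modulo $2$ --- and your version supplies the adjacency bookkeeping (truncation cells bounded by antipodal decagon pairs, the ten $V_e$--$\iota V_e$ edges as exactly those separating the two halves) that the paper leaves to the reader.
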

\begin{proof}
    Let us $T_e$ to be the set of oriented 1-cells of truncation type on $\Sigma$ who are the intersection of $z\in\Rsr_2(\Sigma)$ and $\delta_x$ with $z\neq e$ and $x\in \iota V_e$ and the orientation inherits from $z$.
    It is clear to check that $(\delta'_{\bound}+\delta'_{\edge}+\delta_{\trc}+\delta'_{\trc})$ is a closed 1-cycle with even coefficients i.e. $2(\sum_{\ini(y)\in e,\tm(y)\in V_e}y+\sum_{y\in T_e}y)$.
\end{proof}
%\begin{equation*}
%    T_e:=\{(\delta_x\cup z,o):z\in \Rsr_0(\Sigma), z\neq e,x\in \iota V_e,o \text{\ is the orientation inherits from\ }z\}
%\end{equation*}

We are going to prove the following Proposition which describes the $\Zds\Isr$-module structure of $\Hom_{\Zds\Isr}(E_o,Z_1(\Sigma))$.
\begin{prop}\label{OostrZ1}
    Let us take the morphism $\sigma$ to be $\sigma:=\frac{1}{2}(\sigma'_{\bound}+\sigma'_{\edge}+\sigma_{\trc}+\sigma'_{\trc})$.
    In the exact sequence \eqref{eqn:bacicexseqsig} the cokernal of the map
    \begin{equation}
        \Hom_{\Zds\Isr}(E_o,Z_1(\Sigma))/\Hom_{\Zds\Isr}(E_o,B_1(\Sigma))
    \end{equation}
    is the free abelian group generated by $\sigma_{\trc}$,$\sigma'_{\trc}$, $\sigma_\edge$ and $\sigma$.
    The cokernel of $p_*$ is trivial.
\end{prop}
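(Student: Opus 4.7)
The plan is to establish both assertions of the proposition simultaneously by showing that $\sigma_\trc,\sigma'_\trc,\sigma_\edge,\sigma$ generate $\Hom_{\Zds\Isr}(E_o, H_1(\Sigma))$ integrally; in view of the injection $\Hom_{\Zds\Isr}(E_o,Z_1(\Sigma))/\Hom_{\Zds\Isr}(E_o,B_1(\Sigma))\hookrightarrow\Hom_{\Zds\Isr}(E_o,H_1(\Sigma))$ from the exact sequence \eqref{eqn:bacicexseqsig}, this identifies the quotient with the whole $\Hom$-module and yields surjectivity of $p_*$.

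First I would verify $\Zds$-linear independence modulo $\Hom_{\Zds\Isr}(E_o,B_1(\Sigma))$. Writing $\phi:=a\sigma_\trc+b\sigma'_\trc+c\sigma_\edge+d\sigma$ and substituting the defining identity $2\sigma(e)=\delta'_\bound+\delta'_\edge+\delta_\trc+\delta'_\trc$, the pairings of $\phi(e)$ with the four vanishing cycles $\delta_x$ for $x\in e$, $\delta_x$ for $x\in V_e$, $\delta_y$ for $\ini(y)\in e$ and $\tm(y)\in V_e$, and $\delta_y$ for $\ini(y)\in V_e$ and $\tm(y)\in\iota V_e$ compute --- via Lemma \ref{speint} and the vanishing of intersections of boundaries with cycles --- to $c$, $-c+d$, $-a+b$, $-2b-d$ respectively. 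Requiring all four to vanish (as is forced when $\phi\in\Hom_{\Zds\Isr}(E_o,B_1(\Sigma))$) gives $a=b=c=d=0$. Combined with the rational dimension count $\dim_\Qds\Hom_{\Qds\Isr}(E_\Qds,H_1(\Sigma;\Qds))=4$ coming from the isotypical decomposition \eqref{eqn:homologydec}, the four maps form a $\Qds$-basis.

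For integral generation I would apply Lemma \ref{genhom}: for every coprime tuple $(\alpha_1,\alpha_2,\alpha_3,\alpha_4)\in\Zds^4$, set $\phi:=\alpha_1\sigma_\trc+\alpha_2\sigma'_\trc+\alpha_3\sigma_\edge+\alpha_4\sigma$ and exhibit $y\in H_1(\Sigma)$ with $\la\phi(e),y\ra=1$. Since the intersection form on $H_1(\Sigma)$ is unimodular, this is equivalent to primitivity of $\phi(e)$ in $H_1(\Sigma)$. For an odd prime $p$, the same four vanishing-cycle pairings now evaluate to $\alpha_3$, $\alpha_4-\alpha_3$, $\alpha_2-\alpha_1$, $-2\alpha_2-\alpha_4$; if $p$ were to divide all four, then $p\mid\alpha_3,\alpha_4$ and $p\mid 2\alpha_2$, so (using $p$ odd) $p\mid\alpha_2$ and hence $p\mid\alpha_1$, contradicting $\gcd(\alpha_i)=1$. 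Thus some vanishing cycle provides a pairing coprime to $p$.

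The hard part, and the main obstacle, is $p=2$: all four vanishing-cycle pairings can be even while $\gcd(\alpha_i)=1$, for instance at $(\alpha_1,\alpha_2,\alpha_3,\alpha_4)=(1,1,0,0)$. Here the input from the preceding lemma is decisive: it provides the single relation $[\delta'_\edge+\delta_\trc+\delta'_\trc]\equiv 0$ in $H_1(\Sigma;\Fds_2)$, which is realized integrally by $2[\sigma(e)]$, so $[\sigma(e)]$ is a genuinely new mod-$2$ class. The plan is to verify the $\Fds_2$-linear independence of $[\sigma_\trc(e)],[\sigma'_\trc(e)],[\sigma_\edge(e)],[\sigma(e)]$ in $H_1(\Sigma;\Fds_2)$ by pairing against explicit non-vanishing $1$-cycles constructed from the cellular decomposition of $\Sigma$ --- for example $1$-cycles threading a single face or arising as boundaries of half-sums of $2$-chains; this $\Fds_2$-computation is the crux of the proof, as everything else follows once it is in place.
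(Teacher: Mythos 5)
Your overall strategy is the same as the paper's: reduce both assertions to showing that $\sigma_{\trc}$, $\sigma'_{\trc}$, $\sigma_{\edge}$, $\sigma$ generate $\Hom_{\Zds\Isr}(E_o,H_1(\Sigma))$ integrally, and then test the hypothesis of Lemma \ref{genhom} by pairing $s(e)=\alpha_1\sigma_{\trc}(e)+\alpha_2\sigma'_{\trc}(e)+\alpha_3\sigma_{\edge}(e)+\alpha_4\sigma(e)$ against explicit cycles. Your four intersection numbers with the vanishing cycles agree with Lemma \ref{speint} (up to an immaterial sign on the $\alpha_4$-coefficient of the last one), and your observation that these four dispose of every odd prime but cannot rule out divisibility by $2$ --- witnessed by the coprime tuple $(\alpha_1,\alpha_2,\alpha_3,\alpha_4)=(1,1,0,0)$ --- is exactly right.

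But the proof stops precisely at the step you yourself call the crux: you never exhibit the additional test class, nor carry out the $\Fds_2$-independence verification of $[\sigma_{\trc}(e)]$, $[\sigma'_{\trc}(e)]$, $[\sigma_{\edge}(e)]$, $[\sigma(e)]$ in $H_1(\Sigma;\Fds_2)$. This is a genuine gap: without it nothing excludes $s(e)\in 2H_1(\Sigma)$ for some coprime tuple, so the integral generation claim, and with it both conclusions of the proposition, remain unproved. The paper closes exactly this hole by invoking the system $\{\gamma_x\}$ of simple closed loops from Remark 3.2 of \cite{looijenga2021monodromy}, for which $\la[\gamma_x],[\delta_{x'}]\ra$ equals $1$ when $x=x'$ and $0$ otherwise and which together with the $[\delta_x]$ form a basis of $H_1(\Sigma)$; the single extra pairing $\la[s(e)],[\gamma_{x_1}]\ra=\alpha_1+N(\alpha_3,\alpha_4)$, with $N$ an integral combination of $\alpha_3$ and $\alpha_4$, makes the five intersection numbers coprime and so handles all primes uniformly, with no separate $p=2$ case. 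To finish your version you would need to construct precisely such a cycle --- one whose pairing with $s(e)$ recovers $\alpha_1$ modulo contributions from $\alpha_3$ and $\alpha_4$ --- which is the one piece of geometric input your proposal leaves out.
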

\begin{proof}
    We claim that the $\Zds$-module $\Hom_{\Zds\Isr}(E_o,Z_1(\Sigma))$ is free generated by $\sigma_{\bound}$, $\sigma'_{\bound}$, $\sigma_{\trc}$, $\sigma'_{\trc}$, $\sigma_{\edge}$ and $\sigma$. It is clear that the $\Qds$-dimension of $\Hom_{\Qds\Isr}(E_{\Qds},\Qds\otimes Z_1(\Sigma))$ is 6 and the 6 morphisms are not linearly equivalent. Hence if the claim didn't holds, there exist rational numbers $a_{\bound},a'_{\bound},\cdots,a_{\edge}$ and $a$
    with at least one of them is not an integer such that the following there linear combinations $a_{\bound}\sigma_{\bound}+\cdots+a\sigma $ lie in $Z_1(\Sigma)$
    The integrality of the coefficients of 1-cells of $\Pi$ implies that $a_{\bound},a'_{\bound},\cdots,a_{\edge}$ and $a$ are both integers. This is a contradiction!
    %lie on $e$ and $\iota e$, we could get that $a_1$ is an integer. The coefficients of 1-cells of edge type which having initial points in $V(e)$ and terminal points in $V_e$ is an integer implies that $a_5$ is an integer. Then we will count on the coefficients of 1-cells of ede type which has initial points in $V(e)$ and terminal points in $\iota V(e)$, this implies that $a_2$ is an integer. And the integrality of the coefficients of 1-cells of truncation type implies that $a_3$ and $a_4$ are integers. This finishes the proof for the first assertion.
    %\begin{equation*}
    %    \{a_1,a_2,a_3,a_4,a_5,a_6\}\subset\Qds
    %\end{equation*}
    %\begin{equation*}
    %    a_1\sigma_{\bound}(e)+a_2\sigma'_{\bound}(e)+a_3\sigma_{\trc}(e)+a_4\sigma'_{\trc}(e)+a_5\sigma_{\edge}(e)+a_6 \sigma(e)
    %\end{equation*}

    It is clear that from the Exact Sequence \eqref{eqn:bacicexseqsig}, the map
    \begin{equation}
        \Hom_{\Zds\Isr}(E_o,Z_1(\Sigma))/\Hom_{\Zds\Isr}(E_o,B_1(\Sigma))\to\Hom_{ \Zds\Isr}(E_o,H_1(\Sigma,\Zds))
    \end{equation}
    is injective. The image of $\sigma_{\bound}$ and $\sigma'_{\bound}$ vanishes. Hence we only need to prove that the image of $\sigma_{\trc}$, $\sigma'_{\trc}$, $\sigma_\edge$ and $\sigma$ generates the $\Zds$-module $\Hom_{ \Zds\Isr}(E_o,H_1(\Sigma,\Zds))$. We could check that the dimension of $\Hom_{\Qds\Isr}(E_o\otimes \Qds,H_1(\Sigma,\Qds))$ is four. And the image of $\sigma_{\trc}$, $\sigma'_{\trc}$, $\sigma_\edge$ and $\sigma$ are not linearly equivalent.Therefore we only need to check the conditions in Lemma \ref{genhom}.

    %\begin{equation*}
    %    s:=\alpha_1\sigma_{\trc}+\alpha_2\sigma'_{\trc}+\alpha_3\sigma_\edge+\alpha_4\sigma
    %\end{equation*}

    Now let $\alpha_1$, $\alpha_2$, $\alpha_3$ and $\alpha_4$ be distinct integers such that $\gcd(\alpha_1,\alpha_2,\alpha_3,\alpha_4)=1$. The morphism $s$ is taking as $s:=\alpha_1\sigma_{\trc}+\cdots+\alpha_4\sigma$.
    Let us assume $x_1$ and $x_2$ be elements of $\Rsr_0(\tilde{\Sigma})$ such that $x_1\in e$ and $x_2\in V_e$, $y_1$ and $y_2$ be the elements in $\Rsr_1(\tilde{\Sigma})$ such that $\ini(y_1)\in e$, $\tm(y_1)\in V_e$ and $\ini(y_1)\in V_e$, $\tm(y_1)\in \iota V_e$. Recall that we have discussed in Remark 3.2 of \cite{looijenga2021monodromy} that there exist a system $\{\gamma_x\}_{x\in\Rsr_0(\tilde{\Sigma})}$ of simple closed loops which is not necessarily $\Isr$-invariant such that $\{[\gamma_x],[\delta_x]\}_{x\in\Csr_0(\tilde{\Sigma})}$ become a basis for $H_1(\Sigma)$. And we have $\la[\gamma_x],[\delta_x]\ra$ equals 1 if and only if $x=x'$, otherwise it is always 0.
    Hence by the Lemma \ref{speint}, we have the following intersection numbers
    \begin{equation*}
        \begin{aligned}
            \la [s(e)],[\delta_{x_1}]\ra & = & \alpha_3                      \\
            \la [s(e)],[\delta_{x_2}]\ra & = & -\alpha_3+\alpha_4            \\
            \la [s(e)],[\delta_{y_1}]\ra & = & \alpha_2-\alpha_1             \\
            \la [s(e)],[\delta_{y_2}]\ra & = & \alpha_4-2\alpha_2            \\
            \la [s(e)],[\gamma_{x_1}]\ra & = & \alpha_1+N(\alpha_3,\alpha_4) \\
        \end{aligned}
    \end{equation*}
    Here $N(\alpha_3,\alpha_4)$ is a integral linear combination of $\alpha_3$ and $\alpha_4$. We could check that the five numbers $\alpha_3$, $-\alpha_3+\alpha_4$, $\alpha_2-\alpha_1$, $\alpha_4-2\alpha_2$ and $\alpha_1+N(\alpha_3,\alpha_4)$ are coprime.
    Hence there exist coprime numbers $\{\beta_i\}_{i=1}^5$ such that $\beta_1\alpha_3+\beta_2(-\alpha_3+\alpha_4)+\beta_3(\alpha_2-\alpha_1)+\beta_4(\alpha_4-2\alpha_2)+\beta_5(\alpha_1+N(\alpha_3,\alpha_4))=1$.
    Then let us take $y(\alpha_1,\alpha_2,\alpha_3,\alpha_4)$ to be the combination $\beta_1[\delta_{x_1}]+\beta_2[\delta_{x_2}]+\beta_3[\delta_{y_1}]+\beta_4[\delta_{y_2}]+\beta_5[\gamma_{x_1}]$. It is clear that $\la[s(e)],[y(\alpha_1,\alpha_2,\alpha_3,\alpha_4)]\ra=1$.
    Hence the Lemma \ref{genhom} above implies the second assertion. This finishes the proof.
\end{proof}
%We claim that the five numbers $\alpha_3$, $-\alpha_3+\alpha_4$, $\alpha_2-\alpha_1$, $\alpha_4-2\alpha_2$ and $\alpha_1+N(\alpha_3,\alpha_4)$ are also coprime. For if not, we may assume $d\in\Nds$ is the greatest positive common factor greater than 1. If $d> 2$ it is clear to see $\alpha_1$, $\alpha_2$, $\alpha_3$ and $\alpha_4$ will both divide $d$ which is not possible. If $d=2$, then the first two relations imply $\alpha_3$ and $\alpha_4$ are even, then the last relation and the third relation imply that $\alpha_1$ and $\alpha_2$ are also even. This is an contradiction!
%\begin{equation*}
%    \beta_1\alpha_3+\beta_2(-\alpha_3+\alpha_4)+\beta_3(\alpha_2-\alpha_1)+\beta_4(\alpha_4-2\alpha_2)+\beta_5(\alpha_1+N(\alpha_3,\alpha_4))=1
%\end{equation*}
\begin{cor}\label{Cor:SiggenSL2O}
    Let $p$ is the natural map $Z_1(\Sigma)\to H_1(\Sigma)$ as above, the $\Isr$-equivariant morphism $U_{\Sigma}$ and $V_{\Sigma}$ in $\Hom_{\Zds\Isr}(E_o,H_1(\Sigma))$ be defined as $U_{\Sigma}:=p\circ \sigma_{\trc}$ and $V_{\Sigma}:=p\circ \sigma$. The $\Ocal_o$-module $\Hom_{\Zds\Isr}(E_o,H_1(\Sigma))$ is a free $\Ocal_o$-module of rank two with generators $U_{\Sigma}$ and $V_{\Sigma}$.
\end{cor}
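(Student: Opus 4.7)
The plan is to leverage Proposition \ref{OostrZ1} directly: that result already identifies $\Hom_{\Zds\Isr}(E_o, H_1(\Sigma))$ as a free $\Zds$-module of rank $4$ with basis $\{p_*\sigma_\trc, p_*\sigma'_\trc, p_*\sigma_\edge, p_*\sigma\}$. So the corollary amounts to showing that the four elements $\{U_\Sigma, X U_\Sigma, V_\Sigma, X V_\Sigma\}$ are also a $\Zds$-basis; combined with the relation $X^2 = 5 \cdot \Id$ from Proposition \ref{ringOo}, this immediately implies that $\{U_\Sigma, V_\Sigma\}$ is a free $\Ocal_o$-basis of rank two.

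First I would translate each of the four new candidate generators into the old basis. The easy ones are $U_\Sigma = p_*\sigma_\trc$ and $V_\Sigma = p_*\sigma$. For $X U_\Sigma$, apply the identity $X\sigma_\trc = 2\sigma_\trc + \sigma'_\trc$ from \eqref{Xsigtrc} and push through $p_*$ to obtain $X U_\Sigma = 2 U_\Sigma + p_*\sigma'_\trc$, so that $p_*\sigma'_\trc = X U_\Sigma - 2 U_\Sigma$.

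The work is concentrated in computing $X V_\Sigma$. Expand $X\sigma = \tfrac{1}{2}(X\sigma'_\bound + X\sigma'_\edge + X\sigma_\trc + X\sigma'_\trc)$ using formulas \eqref{Xsigbound}, \eqref{Xsigedge}, \eqref{Xsigtrc} and use the defining relation $\sigma = \tfrac12(\sigma'_\bound+\sigma'_\edge+\sigma_\trc+\sigma'_\trc)$ to absorb the half-integer coefficient of $\sigma'_\bound$; a short manipulation gives the integral identity
\begin{equation*}
X\sigma \;=\; \sigma + \sigma_\bound + \sigma_\edge - \sigma'_\edge + \sigma_\trc - \sigma'_\trc .
\end{equation*}
Applying $p_*$ kills $\sigma_\bound$, and the defining relation for $\sigma$ (now read in $H_1$ where $p_*\sigma'_\bound=0$) yields the auxiliary identity $p_*\sigma'_\edge = 2V_\Sigma + U_\Sigma - X U_\Sigma$. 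Substituting this and the previous expression for $p_*\sigma'_\trc$, one finds $X V_\Sigma = 2 U_\Sigma - V_\Sigma + p_*\sigma_\edge$, equivalently $p_*\sigma_\edge = X V_\Sigma + V_\Sigma - 2 U_\Sigma$.

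Collecting everything, the change-of-basis matrix from $\{U_\Sigma, X U_\Sigma, V_\Sigma, X V_\Sigma\}$ to $\{p_*\sigma_\trc, p_*\sigma'_\trc, p_*\sigma_\edge, p_*\sigma\}$ is
\begin{equation*}
\begin{pmatrix} 1 & 0 & 0 & 0 \\ 2 & 1 & 0 & 0 \\ 0 & 0 & 0 & 1 \\ 2 & 0 & 1 & -1 \end{pmatrix},
\end{equation*}
whose determinant is $-1$. Hence $\{U_\Sigma, X U_\Sigma, V_\Sigma, X V_\Sigma\}$ is a $\Zds$-basis of $\Hom_{\Zds\Isr}(E_o, H_1(\Sigma))$. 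Since $\Ocal_o = \Zds[X]/(X^2-5)$ has $\Zds$-basis $\{1, X\}$, this is precisely the statement that $\{U_\Sigma, V_\Sigma\}$ is a free $\Ocal_o$-basis of rank two. The principal obstacle is the bookkeeping in the $X V_\Sigma$ computation (in particular, recognizing that $p_*\sigma'_\edge$ can be eliminated via the relation defining $\sigma$), but everything is forced once the integral formula for $X\sigma$ is written down.
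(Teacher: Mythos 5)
Your proof is correct and follows essentially the same route as the paper: both start from Proposition \ref{OostrZ1} and use the $X$-action formulas \eqref{Xsigtrc}, \eqref{Xsigbound}, \eqref{Xsigedge} to express the $\Zds$-generators $\Ocal_o$-linearly in $U_{\Sigma}$ and $V_{\Sigma}$, and your identities for $p_*\sigma'_{\trc}$, $p_*\sigma'_{\edge}$ and $p_*\sigma_{\edge}$ agree with the paper's \eqref{EQ:SiggenSL2O}. Your closing determinant-$(-1)$ computation is simply a more explicit justification of the last step, which the paper dispatches by asserting that $U_{\Sigma}$ and $V_{\Sigma}$ are not linearly equivalent over $\Ocal_o$.
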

\begin{proof}
    We have proved in Proposition \ref{OostrZ1}, the $\Zds$-module $\Hom_{\Zds\Isr}(E_o,H_1(\Sigma))$ is freely generated by the image of $\sigma_{\trc}$, $\sigma'_{\trc}$, $\sigma'_{\edge}$ and $\sigma$.
    Since the images of $\sigma_{\bound}$ and $\sigma'_{\bound}$ vanishes in $H_1(\Sigma)$, we have $V_{\Sigma}=\frac{1}{2}(p\circ\sigma'_{\edge}+p\circ\sigma_{\trc}+p\circ\sigma'_{\trc})$. Combined with the Equations \eqref{Xsigtrc} and \eqref{Xsigedge}, we have the following
    \begin{equation}\label{EQ:SiggenSL2O}
        \begin{aligned}
             & p\circ\sigma'_{\trc}  & = & (X-2)U_{\Sigma}              \\
             & p\circ\sigma_{\edge}  & = & -2U_{\Sigma}+(X+1)V_{\Sigma} \\
             & p\circ\sigma'_{\edge} & = & (1-X)U_{\Sigma}+2V_{\Sigma}
        \end{aligned}
    \end{equation}
    Moreover it is clear to check that $U_{\Sigma}$ and $V_{\Sigma}$ are not linearly equivalent over $\Ocal_o$. These facts imply the Corollary.
\end{proof}

%%%%%%%%%%%%%%%%%%%%%%%%%%%%%%%%%%%%%%%%%%%%%%%%%%%%%%%%%%%%%%%%%%
%%%%%%%%%%%%%%%%%%%%%%%%%%%%%%%%%%%%%%%%%%%%%%%%%%%%%%%%%%%%%%%%%%
%%%%%%%%%%%%%%%%%%%%%%%%%%%%%%%%%%%%%%%%%%%%%%%%%%%%%%%%%%%%%%%%%%
%%%%%%%%%%%%%%%%%%%%%%%%%%%%%%%%%%%%%%%%%%%%%%%%%%%%%%%%%%%%%%%%%%
%%%%%%%%%%%%%%%%%%%%%%%%%%%%%%%%%%%%%%%%%%%%%%%%%%%%%%%%%%%%%%%%%%
%%%%%%%%%%%%%%%%%%%%%%%%%%%%%%%%%%%%%%%%%%%%%%%%%%%%%%%%%%%%%%%%%%
%%%%%%%%%%%%%%%%%%%%%%%%%%%%%%%%%%%%%%%%%%%%%%%%%%%%%%%%%%%%%%%%%%
%%%%%%%%%%%%%%%%%%%%%%%%%%%%%%%%%%%%%%%%%%%%%%%%%%%%%%%%%%%%%%%%%%

\section{The Geometric Model from the Bring's Curve}
In this section we will introduce a geometric model for the smooth fiber $C_t$ where $t\in \Bsr^{\circ}$ constructed form the geometric model of the Bring's Curve. We will describe two stable degeneration given by this model and give explicit descriptions of the vanishing cycle of the degeneration. At beginning we will introduce some properties of the Bring's curve that is required below. For more detailed introduction to the Bring's curve, we refer to the survey paper \cite{braden2022bring} by H.\ Braden and L.\ Disney-Hogg.

\subsection{Preliminaries on the Bring's Curve}

Let us begin with a regular icosahedron in $I_{\Rds}$ centered at the origin where $I_{\Rds}$ is a fixed Euclidean 3-space with $\Acal_5$-symmetry as the section \ref{seclatt}. It has 12 vertices, 30 edges and 20 faces. Moreover the antipodal map $\iota$ is well-defined as above. Then a great dodecahedron $\tilde{\Pi}$ has the same edges and vertices as the icosahedron above. However its faces are replaced by inscribed planar regular pentagons that connects 5 coplanar vertices. Hence the number of faces is 12. Every face $z$ has a unique parallel face which is by construction $\iota z$. Note that if $z$ and $z'$ are two different faces that are not parallel, they will intersect on the edges or the vertices and no where else. If we denote the $i$-th cells of $\tilde{\Pi}$ by $\Csr_{i}(\tilde{\Pi})$ with $i=0,1,2$ similar as the last section, the numbers of each set is $12$, $30$ and $12$.
Hence the Euler formula gives
\begin{equation*}
    \sharp\Csr_{0}(\tilde{\Pi})-\sharp\Csr_{1}(\tilde{\Pi})+\sharp\Csr_{2}(\tilde{\Pi})=-6
\end{equation*}
which is the Euler characteristic of a genus 4 surface. It is actually a complex algebraic curve of genus 4 with at least $\Acal_5$-symmetry, for the flat structure on each face can glue together making it a locally flat surface with the vertices as the singularities. Also remember that it was proved in \cite{Cheltsov2015Cremona} that Bring's curve is the only non-hyperelliptic genus 4 curve with $\Acal_5$-symmetry. In other words $\tilde{\Pi}$ with the complex structure above is isomorphic to the Bring's curve.

\begin{rmk}
    Let us consider the map projection away from the origin to the circumscribed icosahedron. We could endow a flat structure on each face of the icosahedron and make it a Riemann surface isomorphic to $\Pds^1$. Then this map will become a ramified triple cover branched at the vertices of the icosahedron. The ramification index is 2 at the vertices of $\tilde{\Pi}$ and 1 at the center of the faces. We can check that these data make the Riemann-Hurwitz formula holds. Hence this map realized the Bring's curve as the branched triple cover of $\Pds^1$ which is not $\Isr$-equivariant. This gives another way of making $\tilde{\Pi}$ a complex algebraic curve by pulling back the complex structure on $\Pds^1$ through this triple cover.
\end{rmk}

\subsection{The Geometric Model $\Pi$ and its Degenerations}
\begin{figure}\label{gtdode}
    \centering
    {\includegraphics[width=0.2\textwidth]{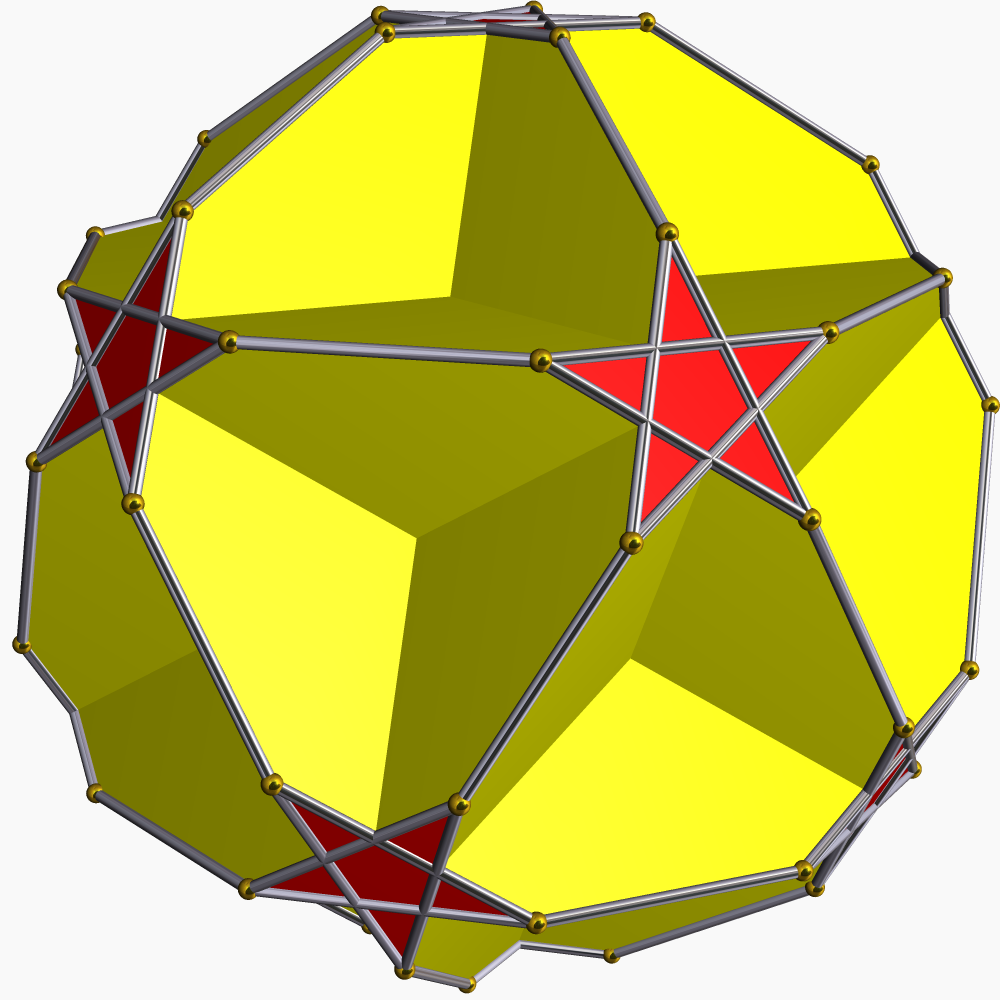}}
    \caption{\small{Removing in an $\Isr$-equivariant manner a small regular penta-pyramid at each vertex of $\tilde{\Pi}$}}
\end{figure}
Next we will modify the model $\tilde{\Pi}$ of the Bring's curve to get a model $\Pi$ of genus 10 curve and we will introduce two degenerations coming from this model.
Let $\hat{\Pi}$ be obtained from $\tilde{\Pi}$ by removing in an $\Isr$-equivariant manner a small regular penta-pyramid at each vertex $x\in \Csr_0(\tilde{\Pi})$. Then the boundary of $\hat{\Pi}$ consists of 12 disjoint closed loops, each of them is a regular pentagram centered at $x$. Note that this operation is the same as following: for each face $z\in\Csr_2(\tilde{\Pi})$, we will remove a small isosceles triangles in $\Stab{(z)}$-manner at each vertex of $z$. Hence each face of $\hat{\Pi}$ is a decagon.
We will identify the opposite points on the boundary of $\hat{\Pi}$ and thus obtained a complex $\Pi$ that is a closed oriented surface of genus 10 endowed with an $\Isr$-symmetry.
It is clear that $\Pi$ has a structure of cellular decomposition: the set of 2-cells of $\Pi$ consists of 12 decagons and is indexed by $\Csr_2(\tilde{\Pi})$ i.e. the 2-cells of the great dodecahedron. The set of 0-cells of $\Pi$ are represented by the antipodal pairs of vertices of $\hat{\Pi}$ and are naturally indexed by the oriented 1-cells of $\tilde{\Pi}$.
The set of 1-cells of $\Pi$ consists two disjoint subset: those lie on the edge of $\tilde{\Pi}$ (hence called of edge type and denoted as $\Csr_{\edge}({\Pi})$) and those come from the boundary of $\hat{\Pi}$ and they form 6 antipodal pairs of pentagrams (hence called of truncation type and denoted as $\Csr_{\trc}({\Pi})$).

\begin{prop}
    The action of $\Isr$ on the cells of $\Pi$ is as following:
    \begin{enumerate}
        \item the action of $\Isr$ on the set $\Csr_0({\Pi})$ of 0-cells of ${\Pi}$ is transitive, each 0-cell has a stabilizer cyclic of order five,
        \item the set $\Csr_1({\Pi})$ of oriented 1-cells of ${\Pi}$ consists of two regular orbits $\Csr_{\edge}({\Pi})$ and $\Csr_{\trc}({\Pi})$,
        \item the action of $\Isr$ on the set $\Csr^{+}_2(\Pi)$ of canonically oriented 2-cells is transitive, the stabilizer of each such cell is cyclic of order five.
    \end{enumerate}
\end{prop}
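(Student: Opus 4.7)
The plan is to verify each of the three assertions by identifying the relevant $\Isr$-orbits on cells of $\Pi$ with natural combinatorial data on the great dodecahedron $\tilde\Pi$, for which the $\Acal_5$-action is classical. Throughout I exploit the dictionary between cells of $\Pi$ and incidence data of $\tilde\Pi$ supplied by the construction $\Pi = \hat\Pi/\iota$, together with the standard transitive actions of $\Acal_5$ on the twelve vertices, twelve faces, and thirty edges of the icosahedron/great dodecahedron.

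I would start with the assertion about 2-cells, which is the simplest. By the definition of $\hat\Pi$, the 12 canonically oriented decagonal 2-cells of $\Pi$ are in $\Isr$-equivariant bijection with $\Csr_2(\tilde\Pi)$, the 12 pentagonal faces of the great dodecahedron. Each such face is perpendicular to a 5-fold axis of the icosahedron passing through its unique opposite icosahedral vertex, and its stabilizer in $\Isr$ is the cyclic group $\mu_5$ generated by that rotation. Transitivity of $\Isr$ on the 12 axes through icosahedral vertices then gives the third assertion.

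For the assertion about 1-cells I would identify $\Csr_\edge(\Pi)$ with the set of oriented edges of $\tilde\Pi$, and $\Csr_\trc(\Pi)$ with the set of oriented (face, vertex) flags of $\tilde\Pi$: each outer edge of the truncation triangle removed at corner $v$ of face $z$ corresponds to the flag $(z,v)$, and the two orientations on the 1-cell match the two orientations on the flag. In both cases $\Isr$ acts regularly, because any nontrivial rotation either moves an endpoint/corner or else swaps the endpoints of a fixed edge (reversing its orientation), and the cyclic order-5 stabilizer of a face cyclically permutes the 5 corners of that face, so no nontrivial element fixes a chosen flag. The antipodal identification exchanges the flag $(z,v)$ with $(\iota z, \iota v)$; to confirm that the quotient action still gives regular orbits on $\Pi$, one would exhibit an explicit $\pi$-rotation in $\Isr$ about a suitable edge-midpoint axis that realises the swap $(z,v) \leftrightarrow (\iota z, \iota v)$.

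For the assertion about 0-cells, the 0-cells of $\Pi$ are indexed via the pentagram boundary components of $\hat\Pi$ by the twelve vertices of $\tilde\Pi$, on which $\Isr$ acts transitively with cyclic stabilizer of order 5 generated by the rotation about the vertex axis. The main technical obstacle here is to reconcile the two parallel labellings (by oriented 1-cells of $\tilde\Pi$ and by pentagrams, hence by vertices of $\tilde\Pi$) and to track the effect of the antipodal identification $\iota$ on each: this bookkeeping is directly analogous to the local analysis at a single vertex carried out in Section~\ref{Sig} for the $\Sigma$-model, and it is here that one must be most careful.
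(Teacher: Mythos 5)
The paper states this proposition without any proof, so there is nothing to compare against line by line; I am judging your argument on its own. Your treatment of the 2-cells and of the two orbits of oriented 1-cells is correct and is surely the intended argument: the 12 decagons correspond $\Isr$-equivariantly to $\Csr_2(\tilde\Pi)$, i.e.\ to the vertex axes, with stabilizer $\mu_5$; the edge-type 1-cells are the oriented edges of $\tilde\Pi$, on which $\Acal_5$ acts regularly; and the truncation segments of $\hat\Pi$ are the $60$ flags $(z,v)$, on which the action is free, so that after checking that the antipodal identification $(z,v)\leftrightarrow(\iota z,\iota v)$ interacts with orientations the right way (your deferred step --- one must also verify that the order-two element realizing this swap \emph{reverses} the orientation of the segment, which it does because $\iota g$ then acts on the face $z$ as the reflection through $v$), one gets a single regular orbit of $60$ oriented truncation 1-cells.

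The genuine gap is in part (1). You identify the 0-cells of $\Pi$ with the pentagram boundary components of $\hat\Pi$, hence with the 12 vertices of $\tilde\Pi$, and conclude a stabilizer of order five. But the pentagram loops are the 1-cycles $\theta_x$, not 0-cells. By the paper's own description, the 0-cells of $\Pi$ are the antipodal pairs of \emph{corners} of those pentagrams: each pentagram at $x$ has five corners, one on each edge of $\tilde\Pi$ incident to $x$, so $\hat\Pi$ has $60$ vertices indexed by the oriented 1-cells of $\tilde\Pi$, and $\Pi$ has $30$ zero-cells indexed by the pairs $\{y,\iota y\}$. The action on these is transitive, but the stabilizer has order \emph{two} (generated by the rotation $\iota r$, with $r$ the reflection fixing the oriented edge $y$), not five. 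The two ``parallel labellings'' you try to reconcile ($60$ oriented 1-cells versus $12$ vertices) cannot be reconciled because they label different objects. Note that the Euler characteristic forces this: $30-60+12=-18$ gives genus $10$, whereas $12$ zero-cells would give $\chi=-36$. So your argument here would fail --- and, as a by-product, part (1) of the proposition as printed (``stabilizer cyclic of order five'') is itself inconsistent with the cell structure described in this section and should read ``of order two''; your proof reproduces the statement only by making the same conflation of $\Csr_0(\Pi)$ with $\Csr_0(\tilde\Pi)$.
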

An oriented 1-cell of $\Pi$ is part of a unique loop consisting of oriented cells of the same type. We will analysis this in detail. A loop of truncation type consists of 5 oriented 1-cells of that type and each oriented 1-cell of truncation type appears in a unique such loop. They are indexed by the set $\Csr_0(\tilde{\Pi})$: every vertex $x$ of $\tilde{\Pi}$ lies in the center of a solid regular pentagram whose interior is removed to form $\hat{\Pi}$. The boundary of this pentagram together with its counterclockwise orientation is a sum $\theta_x$ of five oriented 1-cells of truncation type.
Moreover we have that $\theta_x=-\theta_{\iota x}$. We will call the closed loop constructed above loops of the truncation type. There are 12 such closed loops and the $\Isr$-action permutes them transitively. The stabilizer of each closed loop of truncation type is cyclic of order 5. We will denote the set of such twelve 1-cycles by $\Theta_{\trc}$.

A loop of edge type is the sum of 1-cells of that type and its image under $-\iota$. They are bijectively indexed by the set $\Csr_1(\tilde{\Pi})$and we will denote the oriented loop of edge type defined by $y\in \Csr_1(\tilde{\Pi})$ by $\theta_y$. It is clear that we have $\theta_{\iota y}=-\theta_y$ and $\theta_{-y}=-\theta_y$. The set of such 1-cycles, which we will denote it by $\Theta_{\edge}$, is an $\Isr$-orbit consists of 30 elements. In other words, the $\Isr$-stabilizer of each 1-cycle of edge type is cyclic of order two.
The following Lemma gives the intersection number between the two kinds of 1-cycles and is straightforward to check.
\begin{lem}\label{intPi}
    The intersection numbers of these $1$-cycles are as follows: any two loops of the same type have intersection number zero and
    if
    $x\in \Csr_0(\tilde{\Pi})$ and $y\in \Csr_1(\tilde{\Pi})$, then $\la \theta_x, \theta_y\ra =0$ unless $x$ lies on $y$ or on
    $\iota y$, in which case $\la \theta_x, \theta_y\ra\in\{\pm 1\}$ with the plus sign appearing if and only if $x$ is the end point of $y$.
\end{lem}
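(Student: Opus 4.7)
The claim is a local, combinatorial intersection computation on $\Pi$. My plan is to check the three cases---two cycles of truncation type, two of edge type, and the mixed case---by inspecting supports on $\hat\Pi$ before the antipodal identification. Throughout I will use that the intersection form on $H_1$ of a closed oriented surface is antisymmetric, so every $1$-cycle pairs to zero with itself and with its negative.

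For two cycles of truncation type $\theta_x,\theta_{x'}$, the support of $\theta_x$ is the pentagram boundary around $x\in\Csr_0(\tilde\Pi)$; distinct vertices of $\tilde\Pi$ give disjoint pentagrams on $\hat\Pi$, and the antipodal gluing identifies the pentagrams at $x$ and $\iota x$ (which were already disjoint from each other). Hence whenever $\{x,\iota x\}\neq\{x',\iota x'\}$ the supports on $\Pi$ are disjoint, and otherwise $\theta_{x'}=\pm\theta_x$ and the pairing vanishes by antisymmetry. For two cycles of edge type $\theta_y,\theta_{y'}$, the supports lie on the shortened edges $y,\iota y$ (resp.\ $y',\iota y'$), which on $\hat\Pi$ are pairwise disjoint because a pentagonal-pyramidal neighborhood of every vertex of $\tilde\Pi$ has been removed; the antipodal gluing does not glue edge-type cells, so the only coincidence of supports again gives $\theta_{y'}=\pm\theta_y$, handled by antisymmetry.

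For a mixed pair $\theta_x,\theta_y$, the supports can only meet at endpoints of $y$ or $\iota y$ that lie on the pentagram at $x$ or at $\iota x$. A short enumeration---using that no edge of $\tilde\Pi$ connects antipodal vertices---shows that there is at most one such common $0$-cell, and that one exists if and only if $x$ lies on $y$ or on $\iota y$. At that common vertex $v$, a small disk neighborhood on $\Pi$ is assembled from two half-disks of $\hat\Pi$ glued along an arc of pentagram boundary; in this disk $\theta_x$ runs through $v$ as a smooth arc while $\theta_y$ enters on one side of this arc (along $-\iota y$ or $y$) and exits on the other, so the crossing is transverse and contributes $\pm 1$.

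The sign is then determined by local orientations. I will compute it explicitly at one convenient vertex (using the outward-normal orientation on $\tilde\Pi$ and the counterclockwise convention for $\theta_x$) and propagate the result by the $\Isr$-action, which acts transitively on incident pairs (vertex, oriented edge); combined with the relations $\theta_{\iota x}=-\theta_x$ and the obvious sign flips for $-y$ and $\iota y$, this pins down every case and yields the stated rule ``$+1$ if and only if $x$ is the endpoint of $y$''. The main obstacle is purely the bookkeeping of antipodal identifications and orientation conventions, exactly as in the proof of the analogous Lemma~\ref{lemma:intsce} on $\Sigma$; no further difficulty arises.
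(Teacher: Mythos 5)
Your argument is correct, and it fills in exactly the verification that the paper leaves implicit (the paper states this lemma without proof, calling it straightforward to check, just as with the analogous Lemma~\ref{lemma:intsce} for $\Sigma$). The three case distinctions, the disjointness of supports on $\hat\Pi$, the observation that no edge of the icosahedron joins antipodal vertices (so the glued crossing point is unique), and the propagation of the sign by the simply transitive $\Isr$-action on oriented edges together with the relations $\theta_{\iota x}=-\theta_x$, $\theta_{-y}=\theta_{\iota y}=-\theta_y$ constitute precisely the intended check.
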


%Here $\tau$ is the length of the boundary one regular pentagon $\theta_x$, which is finite under the given hyperbolic structure.

Let us describe two kinds of degenerations realized by the model $\Pi$, they are both genus 10 nodal curves with $\Isr$-symmetry.
They will have the properties that one has $\Theta_{\edge}$ as vanishing cycles and the other one has $\Theta_{\trc}$ as vanishing cycles.

First let us note that there exist a one-parameter family of piecewise Euclidean structure on $\hat{\Pi}$. It is given as following: let us assume the length of the 1-cells of the edge type is $\tau>0$ and the length of the 1-cells of the truncation type is $1-\tau$. This determines $\hat{\Pi}$ as a metric space. It is clear that this metric is piecewise Euclidean and invariant under both $\Isr$ and $\iota$-symmetry. It defines a conformal structure $I_{\tau}$ first on $\Pi$ except the vertices and then it can be extended across these vertices. The given orientation on $\Pi$ makes this conformal structure an $\Isr$-invariant complex structure.

If $\tau$ tends to 1, we got a complex structure on the singular surface $\Pi_{\trc}$ that is obtained from $\Pi$ by contracting each 1-cycle of truncation type into a point. It is clear that this singular surface can also be obtained by identifying the 6 pairs in $\Csr_{0}(\tilde{\Pi})$. The complex structure $I_{1}$ makes it a singular curve isomorphic to $C_{-1}$.
Similarly, if $\tau$ tends to $0$, the length of 1-cycles of edge type will tend to 0. We got a complex structure on the singular surface $\Pi_{\edge}$ that is obtained from $\Pi$ by contracting each 1-cycle of edge type into a point. The complex structure $I_{0}$ makes it a singular curve isomorphic to $C_{\infty}$.
Summarize above, we got the following Proposition.
\begin{prop}\label{hypeModV}
    The Riemann surface $(\Pi, I_\tau)$ is the set of complex points of a complex real algebraic curve. It has genus 10 and comes with a faithful $\Isr$-action, hence is isomorphic to a member of the Winger pencil.
    We thus have defined a continuous map $\gamma_{\Pi}: [0,1]\to \Bsr$ which transverse $(0,1)$ to $\Bsr^{\circ}$ and $\tau=1$ to $t=-1\in \Bsr$ resp.  $\tau=0$ to $t=\infty\in \Bsr$
    such that the pull-back of the Winger pencil yields the family constructed above. The degenerations of $(\Pi, I_\tau)$ into $\Pi_\trc$ resp. $\Pi_{\edge}$ have $\Theta_\trc$ resp. $\Theta_{\edge}$ as its set of vanishing cycles.
\end{prop}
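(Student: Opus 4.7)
The plan is to mimic the argument of Proposition \ref{degsig}: combine cellular bookkeeping with a uniformization of the piecewise-flat surface $\Pi$, and then transport the result into the Winger pencil by invoking the classification of stable $\Acal_5$-symmetric genus-$10$ curves from \cite{zi2021geometry}.

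First I would settle the topology. The cellular decomposition of $\Pi$ gives $|\Csr_0(\Pi)| = 30$ (the $60$ tips of the $12$ boundary pentagrams of $\hat\Pi$, paired by $\iota$), $|\Csr_1(\Pi)| = 30 + 30 = 60$ ($30$ of edge type inherited from $\Csr_1(\tilde\Pi)$, together with $5$ edges on each of the $6$ pentagrams making up the truncation locus), and $|\Csr_2(\Pi)| = 12$, so $\chi(\Pi) = -18$ and $g = 10$. Since the $\Isr$-action on $\tilde\Pi$ commutes with $\iota$, it descends to a faithful $\Isr$-action on $\Pi$.

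Next I would build the complex structure. The piecewise Euclidean metric of parameter $\tau$ is $\Isr \times \la\iota\ra$-invariant and hence descends to $\Pi$; on the complement of the $0$-skeleton, together with the given orientation, it defines an honest conformal structure, and at each cone-like vertex the standard cone-point uniformization $w = z^{2\pi/\theta}$ extends it across the singularity. This yields an $\Isr$-invariant complex structure $I_\tau$. A reflection of $I_{\Rds}$ in a symmetry plane of $\tilde\Pi$ commutes with $\iota$, so it induces an antiholomorphic involution on $(\Pi,I_\tau)$ and endows it with a real algebraic structure. By the classification of stable $\Acal_5$-symmetric genus-$10$ curves proved in \cite{zi2021geometry}, any such curve is a member of the Winger pencil, so one obtains a well-defined $t(\tau) \in \Bsr$ with $(\Pi,I_\tau) \cong C_{t(\tau)}$; continuity of $\tau \mapsto I_\tau$ then furnishes the continuous map $\gamma_\Pi$.

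Finally I would identify the two boundary degenerations. As $\tau \to 0$ the edge-type $1$-cells collapse and each unoriented loop of edge type shrinks to a node; an enumeration using $\theta_{-y} = \theta_{\iota y} = -\theta_y$ gives exactly $15$ such nodes. Symmetrically, as $\tau \to 1$ the six truncation loops collapse, producing $\Pi_\trc$, which is literally Bring's curve $\tilde\Pi$ with its six pairs of antipodal vertices identified and hence has $6$ nodes. Comparing with the list of singular Winger members from the introduction, $\Pi_\edge$ must be $C_\infty$ (the $\binom{6}{2} = 15$-node six-line configuration) and $\Pi_\trc$ must be $C_{-1}$ (Bring's curve with six pairs of points identified), which pins down the direction of $\gamma_\Pi$. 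The vanishing cycles are tautologically the loops that collapsed, namely $\Theta_\edge$ resp.\ $\Theta_\trc$. The main obstacle is the identification step: without the classification theorem of \cite{zi2021geometry} one would have to build an explicit $\Isr$-equivariant projective embedding of $\Pi$ and match it against the Winger pencil using invariant theory, which is substantially more involved.
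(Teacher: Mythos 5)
Your proposal is correct and follows essentially the same route as the paper, which gives no separate proof for this proposition: the construction in the paragraphs preceding it (the $\tau$-parametrized piecewise Euclidean metrics, the extension of the conformal structure over the cone points, and the identification of the $\tau\to 0$ and $\tau\to 1$ limits with $C_\infty$ and $C_{-1}$ via the classification of $\Acal_5$-symmetric genus-$10$ curves) is the intended argument, mirroring Proposition \ref{degsig}. Your cell counts, node counts ($15$ for $\Pi_\edge$ matching the six-line configuration, $6$ for $\Pi_\trc$ matching Bring's curve with antipodal vertices identified), and the resulting identification of the endpoints of $\gamma_\Pi$ all agree with the paper.
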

\begin{rmk}
    The polygon $\Pi_{\edge}$ has interesting properties itself. If we treat each $\theta_x$ as a solid regular pentagon not as a closed loop, the resulting polygon is named as a \textit{dodecadodecahedron}. This polygon has 24 faces, 12 of them are regular pentagons and 12 of them are regular pentagrams, 60 edges and 30 vertices, giving the Euler characteristic $\chi=-6$.
    It was showed in \cite{weber2005kepler} that this is also an Euclidean realization for the Bring's curve.
\end{rmk}

\subsection{Cellular Homology of $\Pi$}
The geometric model $\Pi$ admits a cellular structure which enables us to compute its homology as the homology of the combinatorial chain complex
\begin{equation}\label{seqLam}
    \xymatrix{
        0\ar[r]&\Csr_2(\Pi)\ar[r]^{\partial_2} &\Csr_1(\Pi)\ar[r]^{\partial_1}& \Csr_0(\Pi)\ar[r]& 0.
    }
\end{equation}
Note that the middle term $\Csr_1(\Pi)$ admits a direct sum decomposition namely $\Csr_1(\Pi)=\Csr_{\edge}\oplus\Csr_{\trc}$. Similar as above we will denote the set of $i$-cycles as $Z_i(\Pi):=\ker(\partial_i)$ and $i$-boundaries as $B_i(\Pi):=\Img(\partial_{i+1})$.
Let us apply the functor $\Hom_{\Zds\Isr(E_o,-)}$ to the Exact Sequence \eqref{seqB1} and \eqref{seqph1} with $\Sigma$ replaced by $\Pi$.
For the first one we have the long exact sequence
\begin{equation}\label{eqn:C2B1Pi}
    \xymatrix@C=1pc{0\ar[r]&\Hom_{ \Zds\Isr}(E_o,\ker\partial_2)\ar[r]&\Hom_{ \Zds\Isr}(E_o,\Csr_2(\Pi))\ar[r]&\Hom_{ \Zds\Isr}(E_o,B_1(\Pi))}.
\end{equation}
It is from the construction of $\Pi$ that $\ker \partial_2$ is isomorphic to trivial representation of $\Isr$. Hence the first term vanishes. We could see from the character computation that $\Csr_2(\Pi)\otimes \Cds$ is isomorphic to $\Id\oplus W\oplus I\oplus I'$. Hence the second and the third terms are nontrivial. We will see below that one of them is isomorphic to $\Ocal_o$ and another is isomorphic to $\Ocal$ as $\Ocal_o$-modules and the quotient of them is finite but nonzero.
For the second we have the following
\begin{equation}\label{eqn:bacicexseqPi}
    \xymatrix@C=1pc{0\ar[r]&\Hom_{ \Zds\Isr}(E_o,B_1(\Pi))\ar[r]^{i_*}&\Hom_{ \Zds\Isr}(E_o,Z_1(\Pi))\ar[r]^-{p_*}&\Hom_{ \Zds\Isr}(E_o,H_1(\Pi))}.
\end{equation}
We will introduce four elements of $\Hom_{ \Zds\Isr}(E_o,Z_1(\Pi))$ with two of them have image in the $\Zds$-module spanned by $\Theta_{\edge}$ (which we will denote it as $Z_{\edge}(\Pi)$) denoted by $\pi_{\edge}$ and the other two $\pi_{\trc}$ and $\pi'_{\trc}$ in $\Zds$-module spanned by $\Theta_{\trc}$ (which we will denote it as $Z_{\trc}(\Pi)$).

Let us first observe that the $\Zds\Isr$-module $\Csr_0(\tilde{\Pi})/(\iota+1)$ is isomorphic to $E_o$ where this isomorphism is unique up to a sign and there exist a system of representatives $\Rsr_0(\tilde{\Pi})$ of $\iota$-action on $\Csr_0(\tilde{\Pi})$ such that this isomorphism will identify this system with the basis $\{ e, e_0,\cdots, e_4\}$ of $E_o$. To see this observation recall that $\Csr_0(\tilde{\Pi})$ consists of 12 vertices where the $\Isr$-symmetry permutes them making the set one $\Isr$-orbit. The map $\iota$ commutes with this $\Isr$-symmetry. Moreover the uniqueness comes from the Lemma \ref{autgoEo}.
We will take $e$ represent not only the element in the basis of $E_o$ but also a vertex in $\Rsr_{0}(\tilde{\Pi})$. Finally for each $x\in\Rsr_0(\tilde{\Pi})$, let us fix an element $h_x\in\Isr$ cyclic of order two, which is not necessarily unique, such that $h_xx=-x$.
%Hence we got $E_o\cong \Csr_0(\tilde{\Pi})/(\iota+1)$ where the isomorphism is naturally defined up to a sign according to the Lemma \ref{autgoEo}.

Let us begin with the modules $\Hom_{\Zds\Isr}(E_o,\Csr_2(\Pi))$ and $\Hom_{\Zds\Isr}(E_o,B_1(\Pi))$.
For arbitrary vertex $x\in\Rsr_0(\tilde{\Pi})$, there exist two parallel planer pentagons such that the vertices of $\tilde{\Pi}$ despite $x$ and $\iota x$ lie on one of them. We will denote the two planer pentagon together with its counterclockwise orientation by $z_x$ resp. $z_{\iota x}$. Clearly each of them associates to a oriented 2-cell in $\Csr_2(\Pi)$ in a natural way, we will denote the two 2-cells in the same symbols. It is clear to check that $z_{\iota x}=z_{h_x x}=h_x z_x=-\iota z_x$.

From this observation there are two elements in $\Csr_2(\Pi)$ that draw our attention namley $\theta_{\cel}:=z_e+\iota z_e$ and $\theta'_{\cel}:=\sum_{x\in \Rsr_0(\tilde{\Pi})\backslash\{e\}}(z_x+\iota z_x)$.
They are $\Stab(e)$-invariant and satisfies the property that $h_e\theta_{\cel}=-\theta_{\cel}$ resp.$h_e\theta'_{\cel}=-\theta'_{\cel}$. The two elements give two $\Isr$-equivariant morphisms $\pi_{\cel}$ resp. $\pi'_{\cel}$ of $E_o\to \Csr_2(\Pi)$, namely $e\to\theta_{\cel}$ resp. $e\to \theta'_{\cel}$.

%On the other hand there exist another element $\theta'_{\cel}\in\Csr_2(\tilde{\Pi})$ which is $\Stab(e)$-invariant and the element $h_e\in\Isr$ brings $\theta'_{\cel}$ to $-\theta'_{\cel}$ namely
%Same as the situations for $\Sigma$, the endomorphism $X$ of $E_o$ acts on $\Hom_{\Zds\Isr}(E_o,\Csr_2(\Pi))$ by precomposition. More explicitly, it is given as following

The boundaries of $\theta_{\cel}$ resp. $\theta'_{\cel}$ satisfies the relation that $\partial_2\theta'_{\cel}=\partial_2\theta_{\cel}+2\partial_2(-\iota z_e+\sum_{x\in \Rsr_0(\tilde{\Pi})\backslash\{e\}}z_x)$.
Let us take the two elements $\theta_{\bound}$ resp. $\theta'_{\bound}$ as $\theta_{\bound}:=\partial_2\theta_{\cel}$ and $\theta'_{\bound}:=\partial_2(-\iota z_e+\sum_{x\in \Rsr_0(\tilde{\Pi})\backslash\{e\}}z_x)$.
Since the element $\theta'_{\bound}$ is the boundary of $\frac{1}{2}(\theta'_{\cel}-\theta_{\cel})$, it has to be $\Stab(e)$-equivariant and satisfy the relation $h_e\theta'_{\bound}=-\theta'_{\bound}$. It is the same for $\theta_{\bound}$. Hence we have two $\Isr$-equivariant morphisms in $\Hom_{\Zds\Isr}(E_o,B_1(\Pi))$ i.e. $\pi_{\bound}$ resp. $\pi'_{\bound}$ given as $e\to\theta_{\bound}$ resp. $e\to \theta'_{\bound}$.

\begin{rmk}
    Observe that the element $(-\iota z_e+\sum_{x\in \Rsr_0(\tilde{\Pi})\backslash\{e\}}z_x)\in \Csr_2(\Pi)$ is $\Stab(e)$-invariant. However instead of inverting its signature, the element $h_e$ will fix it. These facts implies that $\pi'_{\bound}$ is an element of $\Hom_{\Zds\Isr}(E_o,B_1(\Pi))$ but it does not lie in the image of $\partial_2$.
\end{rmk}
Next let us consider the $\Zds\Isr$-modules $\Hom_{\Zds\Isr}(E_o,Z_{\trc}(\Pi))$ and $\Hom_{\Zds\Isr}(E_o,Z_{\edge}(\Pi))$. For the $Z_{\trc}(\Pi)$-part,
let us take $\theta_{\trc}=\theta_e$ and $\theta'_{\trc}:=\sum_{x\in \Rsr_0(\tilde{\Pi})\backslash\{e\}}\theta_x$. It is clear that they are $\Stab(e)$-invariant and signature reversal by $h_e$.
Therefore we may define the map $E_o\to Z_{\trc}$ in an $\Isr$-equivariant manner $\pi_{\trc}$ to be the morphism as $e\to \theta_e$ resp. $\pi'_{\trc}:e\to\theta'_{\trc}$.
%It is clear that two morphisms are related by the endomorphism $X$ of $E_o$ by

%Recall that $\Theta_\trc$ has 12 elements which makes into 6 antipodal pairs and the $\Acal_5$-symmetry permutes it.

%On the other hand, there also exist two ways of $\Isr$-equivariantly associating $E_o$ in $\Csr_{\edge}(\Pi)$. First
For the $Z_{\edge}(\Pi)$-part, observe that for each vertex $x\in\Rsr_0(\tilde{\Pi})$ there exist five oriented 1-cells $y\in\Csr_1(\tilde{\Pi})$ such that they have $x$ as common initial point. Besides of these edges and their $\iota$-dual, the oriented edges that don't parallel to $z_e$ is the 10-element subset $\{y\in \Csr_1(\tilde{\Pi}):\ini(y)\in z_e, \tm(y)\in z_{\iota_e}\}$ of $\Csr_1(\tilde{\Pi})$ that admits the symmetry of $\Stab(e)\times(-\iota)$. This set consists of two $\Stab(e)$-orbit and $-\iota$ exchanges the two orbits.
From these observations, let us take $\theta_{\edge}:=\sum_{y\in\Csr_1(\tilde{\Pi}),\ini(y)=e} \theta_y$ and $\theta'_{\edge}:=\frac{1}{2}\sum_{y\in \Csr_1(\tilde{\Pi}),\ini(y)\in z_e,\tm(y)\in z_{\iota e}}\theta_y$. Since the 1-cycle of edge type satisfies the relation that $\theta_{-\iota y}=\theta_{y}$, the element $\theta'_{\edge}$ lies in $Z_{\edge}(\Pi)$. Moreover the two elements are both $\Stab{(e)}$-invariant and signature reversal by $h_e\in\Isr$.
Hence we can define the morphism $\pi_{\edge}$ resp. $\pi'_{\edge}$ to be the $\Isr$-equivariant map $e\to \theta_{\edge}$ resp. $e\to \theta'_{\edge}$.
%We could also consider
%the sum of $\theta_y$ with $y$ run over above set actually lies in $\Csr_{\edge}(\Pi)$ i.e.
%From our analysis above, it is invariant under the group $\Stab{(e)}$ and the element $h_e\in\Isr$ acts on it by sending $\theta'_{\edge}$ to $-\theta_{\edge}$.
%The two morphisms are not linearly equivariant even over $\Qds$, however they are related by the endomorphism $X$ of $E_o$. The following relations are clear to check
%and each such $y$ determines a 1-cycle of edge type $\theta_{y}$
%\begin{equation*}
%    \theta'_{\edge}:=\frac{1}{2}\sum_{y\in \Csr_1(\tilde{\Pi}),\ini(y)\in z_e,\tm(y)\in z_{\iota e}}\theta_y
%\end{equation*}

\begin{rmk}
    The element $\sum_{y\subset \partial z_e}\theta_y$, where $y$ is naturally oriented such that it is the same as the boundary of $z_e$, is also $\Stab(e)$-invariant. However $h_e$ will fix this element. Hence it cannot give a morphism from $E_o$ to $Z_{\edge}(\Pi)$.
\end{rmk}
We have the following Propositions.
\begin{prop}\label{Zbodpgmgen}
    The $\Zds$-modules $\Hom_{\Zds\Isr}(E_o,\Csr_2(\Pi))$, $\Hom_{\Zds\Isr}(E_o,B_1(\Pi))$, $\Hom_{\Zds\Isr}(E_o,Z_{\edge}(\Pi))$ and $\Hom_{\Zds\Isr}(E_o,Z_{\trc}(\Pi))$ are both free $\Zds$ of rank two. Moreover they are both $\Ocal_o$-modules where
    \begin{enumerate}
        \item $\Hom_{\Zds\Isr}(E_o,\Csr_2(\Pi))$ is a free $\Ocal_o$-module of rank one with\begin{equation}\label{Xpicel}
                  \begin{aligned}
                      X\pi_{\cel}  & = & \pi'_{\cel} \\
                      X\pi'_{\cel} & = & 5\pi_{\cel}
                  \end{aligned}
              \end{equation}
        \item $\Hom_{\Zds\Isr}(E_o,B_1(\Pi))$ is isomorphic to $\Ocal$ as $\Ocal_o$-modules with \begin{equation}\label{Xpibound}
                  \begin{aligned}
                      X\pi_{\bound}  & = & \pi_{\bound}+2\pi'_{\bound} \\
                      X\pi'_{\bound} & = & 2\pi_{\bound}-\pi'_{\bound}
                  \end{aligned}
              \end{equation} Moreover it contains the image of $\Hom_{\Zds\Isr}(E_o,\Csr_2(\Pi))$ as a submodule of index two.
        \item $\Hom_{\Zds\Isr}(E_o,Z_{\trc}(\Pi))$ is a free $\Ocal_o$-module of rank one with\begin{equation}\label{Xpitrc}
                  \begin{aligned}
                      X\pi_{\trc}  & =  \pi'_{\trc} \\
                      X\pi'_{\trc} & =  5\pi_{\trc}
                  \end{aligned}
              \end{equation}
        \item $\Hom_{\Zds\Isr}(E_o,Z_{\edge}(\Pi))$ is isomorphic to $\Ocal$ as $\Ocal_o$-module with \begin{equation}\label{Xpiedge}
                  \begin{aligned}
                      X\pi_{\edge}  & = & -\pi_{\edge}  +  2\pi'_{\edge} \\
                      X\pi'_{\edge} & = & 2\pi_{\edge} +  \pi'_{\edge}
                  \end{aligned}
              \end{equation}
    \end{enumerate}
\end{prop}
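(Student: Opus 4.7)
The proof runs parallel to that of Proposition \ref{SigC2B1ZeZtgen}, with the geometric model $\Sigma$ replaced by $\Pi$. Since $E_o$ is a principal $\Zds\Isr$-module by Lemma \ref{lem:basisEo} and each of $\Csr_2(\Pi)$, $B_1(\Pi)$, $Z_\trc(\Pi)$, $Z_\edge(\Pi)$ is free over $\Zds$, the four Hom-groups are automatically free $\Zds$-modules. A character computation, using the decomposition $\Csr_{2,\Cds}(\Pi)\cong \Cds\oplus W\oplus I\oplus I'$ mentioned after \eqref{eqn:C2B1Pi} together with the analogous decompositions for the remaining three $\Cds\Isr$-modules, shows that each $\Hom_{\Qds\Isr}(E_\Qds,\cdot)$-space has $\Qds$-dimension exactly $2$, giving the rank-$2$ claim.

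Next, I would verify the four sets of $X$-equations \eqref{Xpicel}--\eqref{Xpiedge} by direct combinatorial expansion. Using $X(e)=\sum_{i=0}^{4}e_i$ from Section~\ref{seclatt} together with the identification $\Csr_0(\tilde{\Pi})/(\iota+1)\cong E_o$ that assigns to each basis vector $e_i$ a vertex $x_i\in\Rsr_0(\tilde{\Pi})$ with its associated face $z_{x_i}$, vertex-loop $\theta_{x_i}$, etc., one sums the primed and unprimed generators based at each $x_i$ and recognises the outcome as the stated combination; the relation $X^2=5\,\Id$ from Proposition \ref{ringOo} serves as an automatic sanity check on each equation.

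Given freeness and the $X$-equations, the four structural statements follow quickly. For Claims (1) and (3), the identities $X\pi_\cel=\pi'_\cel$ and $X\pi_\trc=\pi'_\trc$ exhibit $\pi_\cel$ and $\pi_\trc$ as single $\Ocal_o$-generators of the rank-$2$ $\Zds$-modules in question, making each a free $\Ocal_o$-module of rank $1$. For Claims (2) and (4), solving the equations yields $\pi'_\bound=\tfrac{1}{2}(X-1)\pi_\bound$ and $\pi'_\edge=\tfrac{1}{2}(X+1)\pi_\edge$; under the embedding $\Ocal_o\hookrightarrow\Ocal$ sending $X$ to $2Y-1$, these read $\pi'_\bound=(Y-1)\pi_\bound$ and $\pi'_\edge=Y\pi_\edge$, identifying each Hom-group with $\Ocal$ as an $\Ocal_o$-module of $\Zds$-rank $2$. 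The submodule-of-index-two claim in (2) then follows from computing $\partial_{2,*}\pi_\cel=\pi_\bound$ and $\partial_{2,*}\pi'_\cel=\pi_\bound+2\pi'_\bound$, where the second identity uses that the fundamental $2$-chain $\sum_{y\in\Csr_0(\tilde{\Pi})}z_y\in\Csr_2(\Pi)$ is closed; the resulting transition matrix has determinant $2$.

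The main hurdle, the same one that appears in Proposition \ref{SigC2B1ZeZtgen}, is to rule out that $\{\pi_\bound,\pi'_\bound\}$ and $\{\pi_\edge,\pi'_\edge\}$ each span only a proper sublattice of their Hom-groups. By the rank-$2$ fact and $\Qds$-linear independence, any hypothetical extra element has the form $a\pi+b\pi'$ with $(a,b)\in\Qds^2$ not both integral; I plan to rule this out by selecting an oriented $1$-cell of $\Pi$ on which one of $\theta_\bound,\theta'_\bound$ (resp.\ $\theta_\edge,\theta'_\edge$) has coefficient $\pm 1$ and the other has coefficient $0$, so that the integrality of the image $1$-chain forces both $a$ and $b$ into $\Zds$. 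Once this spanning is verified, all four assertions of the proposition follow.
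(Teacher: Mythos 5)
Your proposal is correct and follows essentially the same route as the paper, which likewise reduces everything to the argument of Proposition \ref{SigC2B1ZeZtgen}: freeness from the principality of $E_o$, rank two from the character computation, the $X$-relations by direct expansion, and the spanning of the full Hom-lattices by counting coefficients of $1$-cells (your identification of $\Hom_{\Zds\Isr}(E_o,B_1(\Pi))$ and $\Hom_{\Zds\Isr}(E_o,Z_{\edge}(\Pi))$ with $\Ocal$ via $\pi'_{\bound}=(Y-1)\pi_{\bound}$, $\pi'_{\edge}=Y\pi_{\edge}$, and the index-two computation from $\partial_{2,*}\pi'_{\cel}=\pi_{\bound}+2\pi'_{\bound}$, are all consistent). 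The only small point to tidy is that the primitivity/spanning check is also needed, in principle, for the pairs $\{\pi_{\cel},\pi'_{\cel}\}$ and $\{\pi_{\trc},\pi'_{\trc}\}$ in Claims (1) and (3) (though there it is immediate from the orbit decomposition), and that a single $1$-cell pins down only one of the two coefficients $a,b$, so you need one separating cell for each generator.
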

\begin{proof}
    The proof for this Proposition is similar to the proof of Proposition \ref{SigC2B1ZeZtgen}. The Equations \ref{Xpicel}, \ref{Xpibound}, \ref{Xpitrc} and \ref{Xpiedge} come from direct computation. And the other claims come from counting on the coefficients of 1-cells.
\end{proof}

\begin{lem}
    The element $(\theta'_{\bound}+\theta_{\trc}-\theta'_{\trc}+\theta'_{\edge})$ is divisible by two in $Z_1(\Pi)$. In particular, the class $[\theta_{\trc}-\theta'_{\trc}+\theta'_{\edge}]$ is a boundary in $H_1(\Pi,\Zds/2)$.
\end{lem}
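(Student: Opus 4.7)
The plan is to exhibit an explicit $1$-chain $T\in \Csr_1(\Pi)$ such that
\[
    2T \;=\; \theta'_{\bound}+\theta_{\trc}-\theta'_{\trc}+\theta'_{\edge},
\]
in direct analogy with the preceding lemma for $\Sigma$. First I would expand the boundary $\theta'_{\bound}=\partial_2\bigl(-\iota z_e+\sum_{x\in \Rsr_0(\tilde{\Pi})\backslash\{e\}}z_x\bigr)$ cell by cell. Every oriented $2$-cell $z_x$ of $\Pi$ is a decagon whose boundary is an alternating sum of five edge-type $1$-cells (coming from the edges of $\tilde{\Pi}$ adjacent to $z_x$) and five truncation-type $1$-cells (coming from the pentagram around each of the five vertices of $z_x$). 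After writing $\theta'_{\bound}$ in this way and combining with the remaining terms, I would check coefficient by coefficient that every edge-type $1$-cell and every truncation-type $1$-cell occurs with an even coefficient.

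The natural candidate for $T$ is the sum over two $\Stab(e)$-orbits: the five edge-type $1$-cells with initial point in $z_e$ and terminal point in $z_{\iota e}$, oriented from $z_e$ to $z_{\iota e}$; together with the five truncation-type $1$-cells lying on the boundary of a pentagram centered at some $x\in\Rsr_0(\tilde{\Pi})\setminus\{e\}$, oriented as they appear in $\partial z_x$. The crucial observation is that in $\theta'_{\edge}$ each such edge-type $1$-cell appears with coefficient $\tfrac12$ twice (once via $y$ and once via $-\iota y$), so it appears with coefficient $1$, and in $\partial z_x$ the same edge-type $1$-cell also appears with coefficient $1$, producing a total of $2$; the parallel bookkeeping for the truncation arcs, using the defining relations $\theta_{\trc}=\theta_e$ and $\theta'_{\trc}=\sum_{x\neq e}\theta_x$, shows that every truncation-type arc likewise picks up coefficient either $0$ or $\pm 2$.

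The main obstacle will be sign bookkeeping. The decagonal $2$-cells $z_x$ are canonically oriented, which forces a specific orientation on their boundary arcs, while the pentagram loops $\theta_x$ are oriented counterclockwise as seen from outside $\hat{\Pi}$; one must verify that these two orientations on a truncation-type $1$-cell agree (respectively disagree) in the correct pattern so that the $\partial_2(-\iota z_e)$ contribution cancels with $+\theta_{\trc}$ and the contributions from $\sum_{x\neq e}z_x$ combine with $-\theta'_{\trc}$ to give even coefficients. Similarly, for edge-type $1$-cells one must check that the sign of $\theta_y$ inherited from $y\in \Csr_1(\tilde\Pi)$ matches the sign appearing in $\partial z_x$. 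Once these sign compatibilities are verified on a single $\Stab(e)$-orbit, the equality $2T = \theta'_{\bound}+\theta_{\trc}-\theta'_{\trc}+\theta'_{\edge}$ follows on all of $\Pi$ by the $\Stab(e)$-invariance of every term involved.

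For the second statement, because $\theta'_{\bound}\in B_1(\Pi)$ its class is trivially a boundary, so in $H_1(\Pi,\Zds/2)$ we obtain
\[
    [\theta_{\trc}-\theta'_{\trc}+\theta'_{\edge}]\;=\;[\theta'_{\bound}+\theta_{\trc}-\theta'_{\trc}+\theta'_{\edge}]\;=\;2[T]\;=\;0,
\]
which is the claimed vanishing.
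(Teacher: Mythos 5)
Your proposal is correct and follows essentially the same route as the paper, which for the $\Pi$ case simply says ``direct computation'' and for the analogous $\Sigma$ lemma exhibits the element explicitly as twice a $1$-chain supported on one $\Stab(e)$-orbit of edge-type cells plus one orbit of truncation-type cells --- exactly the candidate $T$ you describe. The sign bookkeeping you flag as the remaining obstacle is precisely the content of that ``direct computation,'' and since $2T$ is a cycle and $\Csr_1(\Pi)$ is torsion-free, $T$ itself lies in $Z_1(\Pi)$ as required.
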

\begin{proof}
    This is a direct computation.
\end{proof}
\begin{prop}\label{Z1Pigen}
    Let us take the morphism $\pi$ to be $\pi:=\frac{1}{2}(\pi'_{\bound}+\pi_{\trc}-\pi'_{\trc}+\pi'_{\edge})$. Then
    in the Exact Sequence \eqref{eqn:bacicexseqPi}, the cokernel of the map $i_{*}$
    \begin{equation*}
        \Hom_{\Zds\Isr}(E_o,Z_1(\Pi))/ \Hom_{\Zds\Isr}(E_o,B_1(\Pi))
    \end{equation*}
    is free abelian group generated by $\pi_{\trc}$, $\pi'_{\trc}$,$\pi_{\edge}$ and $\pi$. The cokernel of $p_{*}$ is trivial.
\end{prop}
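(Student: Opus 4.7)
The plan is to mirror the blueprint used for Proposition \ref{OostrZ1}, in which an intermediate cokernel is identified first by purely algebraic/combinatorial bookkeeping, and the final surjectivity of $p_*$ is then reduced to the Bezout-style criterion of Lemma \ref{genhom}. Concretely, my first step is to show that $\{\pi_{\bound},\pi'_{\bound},\pi_{\trc},\pi'_{\trc},\pi_{\edge},\pi\}$ is a $\Zds$-basis of $\Hom_{\Zds\Isr}(E_o,Z_1(\Pi))$. A character computation gives $\dim_{\Qds}\Hom_{\Qds\Isr}(E_{\Qds},Z_1(\Pi)_{\Qds})=6$, and the six listed morphisms are easily seen to be $\Qds$-linearly independent, so they form a $\Qds$-basis. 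To upgrade this to a $\Zds$-basis I would argue, exactly as in Proposition \ref{OostrZ1}, by contradiction: any rational combination $a_{\bound}\pi_{\bound}+a'_{\bound}\pi'_{\bound}+a_{\trc}\pi_{\trc}+a'_{\trc}\pi'_{\trc}+a_{\edge}\pi_{\edge}+a\,\pi$ lying in $Z_1(\Pi)$ must assign integer coefficients to every oriented $1$-cell, and inspecting a well-chosen set of $1$-cells of both edge and truncation type (for instance the cells making up $\theta_{\trc}$, $\theta_{\edge}$, and those appearing in $\theta'_{\bound}$ but not in $\partial\theta_{\cel}$) forces $a_{\bound},a'_{\bound},a_{\trc},a'_{\trc},a_{\edge},a$ all to be integers.

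Given this basis, Proposition \ref{Zbodpgmgen}(2) identifies the image of $i_*$ as the sublattice $\Zds\pi_{\bound}\oplus\Zds\pi'_{\bound}$, so the quotient $\Hom_{\Zds\Isr}(E_o,Z_1(\Pi))/\Hom_{\Zds\Isr}(E_o,B_1(\Pi))$ is free of rank four with the claimed generators $\pi_{\trc},\pi'_{\trc},\pi_{\edge},\pi$.

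For the vanishing of the cokernel of $p_*$, I would appeal to Lemma \ref{genhom}. The dimension count gives $\dim_{\Qds}\Hom_{\Qds\Isr}(E_{\Qds},H_1(\Pi,\Qds))=4$, and the images $[\pi_{\trc}],[\pi'_{\trc}],[\pi_{\edge}],[\pi]$ are $\Qds$-linearly independent since $\pi_{\bound}$ and $\pi'_{\bound}$ already span the kernel of $p_*$ on our basis. Fix coprime integers $\alpha_1,\alpha_2,\alpha_3,\alpha_4$ and set $s:=\alpha_1\pi_{\trc}+\alpha_2\pi'_{\trc}+\alpha_3\pi_{\edge}+\alpha_4\pi$. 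Using Lemma \ref{intPi} (the analog on $\Pi$ of Lemma \ref{speint}) I would compute $\la [s(e)],[\theta_x]\ra$ and $\la [s(e)],[\theta_y]\ra$ for $x\in\Rsr_0(\tilde{\Pi})$ and a representative $y\in\Csr_1(\tilde{\Pi})$, together with $\la [s(e)],[\gamma_e]\ra$ for a loop $\gamma_e$ dual to $\theta_e$ (the existence of such a system $\{\gamma_x\}$ dual to $\{\theta_x\}$ in a basis of $H_1(\Pi)$ is parallel to the one invoked in the $\Sigma$ case). These pairings will read off as an integral unimodular change of variables in the $\alpha_i$ together with one extra entry involving $\gamma_e$, and hence form a tuple of coprime integers; a Bezout combination then produces the required $y(\alpha_1,\alpha_2,\alpha_3,\alpha_4)\in H_1(\Pi)$ with $\la [s(e)],y\ra=1$.

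The main obstacle is the bookkeeping for $\pi$, because $\pi=\tfrac12(\pi'_{\bound}+\pi_{\trc}-\pi'_{\trc}+\pi'_{\edge})$ involves a halving that is only legitimate because of the preceding lemma, and so the intersection numbers with $[\pi(e)]$ must be extracted from an explicit lift of $\pi'_{\bound}+\pi_{\trc}-\pi'_{\trc}+\pi'_{\edge}$ as $2$ times an integral $1$-chain; once this lift is written down explicitly in terms of oriented cells of $\Pi$, the intersection numbers with $\theta_x$, $\theta_y$, and $\gamma_e$ are determined by Lemma \ref{intPi} and the combinatorics of the great dodecahedron, and the coprimality required by Lemma \ref{genhom} follows.
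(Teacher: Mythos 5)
Your proposal is correct and follows essentially the same route as the paper: establish that the six morphisms freely generate $\Hom_{\Zds\Isr}(E_o,Z_1(\Pi))$ by integrality of coefficients on $1$-cells, read off the cokernel of $i_*$ from Proposition \ref{Zbodpgmgen}, and verify surjectivity of $p_*$ via the Bezout criterion of Lemma \ref{genhom} using the intersection numbers of Proposition \ref{intedtrPi} and the dual classes of Proposition \ref{dulintPi}. The only slip is your parenthetical that the dual system $\{\gamma_x\}$ together with the $\{\theta_x\}$ gives a basis of $H_1(\Pi)$ as in the $\Sigma$ case --- the paper notes these only span a primitive sublattice of $H_1(\Pi)$ (there are too few of them in rank) --- but this is harmless since Lemma \ref{genhom} only requires exhibiting some class pairing to $1$, not a full basis.
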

Before we give the proof of the Proposition \ref{Z1Pigen}, let us given the intersection numbers of some cycle class.
\begin{prop}\label{intedtrPi}
    Let $e$, $\Theta_{\edge}$, $\Theta_{\trc}$, $\pi_{\edge}$, $\pi'_{\edge}$, $\pi_{\trc}$ and $\pi'_{\trc}$ be defined as before. Then the classes $[\pi_{\edge}(e)]$ and $[\pi'_{\edge}(e)]$ resp. $[\pi_{\trc}(e)]$ and $[\pi'_{\trc}(e)]$ has zero intersection number with the elements in $\Theta_{\edge}$ resp. $\Theta_{\trc}$. Meanwhile for $x\in \Rsr_0(\tilde{\Pi})$ and $y\in \Csr_{1}(\tilde{\Pi})$ with $\ini(y)\in \Rsr_0(\tilde{\Pi})$, we have
    \begin{equation*}
        \begin{aligned}
             & \la [\pi_{\edge}(e)],[\theta_x]\ra=
            \begin{cases} 5   & \text{if }  x=e,    \\
              -1, & \text{if\ } x\neq e \\
            \end{cases}
            \\
             & \la [\pi'_{\edge}(e)],[\theta_x]\ra=
            \begin{cases} 0  & \text{if }  x=e,     \\
              2, & \text{if }  x\neq e.
            \end{cases}
            \\
             & \la [\pi_{\trc}(e)],[\theta_y]\ra=
            \begin{cases} -1 & \text{if } \ini(y)=e,         \\
              0, & \text{if }  \text{otherwise.} \\
            \end{cases}
            \\
             & \la [\pi'_{\trc}(e)],[\theta_y]\ra=
            \begin{cases}
                1  & \text{if } \ini(y)=e,                                            \\
                -2 & \text{if }  \ini(y)\in z_e \text{\ and\ } \tm(y)\in z_{\iota e}, \\
                0, & \text{if }  \text{otherwise.}                                    \\
            \end{cases}
        \end{aligned}
    \end{equation*}
\end{prop}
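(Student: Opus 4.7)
The plan is to reduce the entire statement to a direct application of Lemma \ref{intPi} after unwinding the definitions of $\pi_\edge, \pi'_\edge, \pi_\trc$ and $\pi'_\trc$. The vanishing assertions are the easiest: by construction $\pi_\edge(e)=\theta_\edge$ and $\pi'_\edge(e)=\theta'_\edge$ are $\Zds$-linear combinations of edge-type loops $\theta_y$, and $\pi_\trc(e)=\theta_\trc$, $\pi'_\trc(e)=\theta'_\trc$ are $\Zds$-linear combinations of truncation-type loops $\theta_x$; since Lemma \ref{intPi} says any two loops of the same type meet trivially, the corresponding pairings vanish term by term.

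For the mixed pairings I would first fix the system of representatives $\Rsr_0(\tilde{\Pi})$ to consist of $e$ together with the five vertices of the pentagonal face $z_e$ (these are precisely the neighbors of $e$ in the circumscribed icosahedron, lying in the plane perpendicular to the $e,\iota e$-axis). With this choice the combinatorics is transparent: every oriented edge $y$ with $\ini(y)=e$ terminates at a vertex of $z_e\subset\Rsr_0(\tilde{\Pi})$, and every oriented edge $y$ with $\ini(y)\in z_e,\tm(y)\in z_{\iota e}$ satisfies $\iota\tm(y)\in z_e\subset \Rsr_0(\tilde{\Pi})$. Now I would expand each pairing as a sum of basic intersection numbers and evaluate them using Lemma \ref{intPi}, with the sign of every nonzero term dictated by whether the vertex is the initial or terminal endpoint of the edge, and then corrected by the antisymmetry of $\la-,-\ra$ and by the identity $\theta_{\iota y}=-\theta_y$ whenever contributions from $\iota y$ are re-expressed in terms of $\theta_y$. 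For instance, in $\la[\pi'_\trc(e)],[\theta_y]\ra$ with $\ini(y)\in z_e$ and $\tm(y)\in z_{\iota e}$, exactly two terms of $\theta'_\trc$ contribute, namely $x=\ini(y)$ (on $y$, at its initial vertex) and $x=\iota\tm(y)$ (on $\iota y$, at its terminal vertex); each of these yields $-1$, giving the total $-2$ predicted. All other entries of the table are obtained by an identical, shorter case analysis.

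The only real difficulty is sign bookkeeping: the intersection pairing is antisymmetric, the sign in Lemma \ref{intPi} is fixed by an initial/terminal convention, and the relations $\theta_{\iota x}=-\theta_x$ and $\theta_{\iota y}=-\theta_y$ must be applied consistently every time a pair of antipodal cells appears. Once these conventions are chosen once and for all (matching the counterclockwise orientation used in the definitions of $\theta_\edge,\theta'_\edge,\theta_\trc,\theta'_\trc$), each entry reduces to counting, in the great dodecahedron, how many vertices of the $\Stab(e)$-orbit through $e$ lie on a given edge and its antipode. This counting is a finite check and produces the numbers in the displayed table.
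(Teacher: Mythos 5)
Your proposal is correct and follows exactly the route the paper intends: the paper's own proof is just the one-line assertion ``this is a direct computation from the model $\Pi$,'' and what you write out --- expanding $\theta_\edge,\theta'_\edge,\theta_\trc,\theta'_\trc$ into basic loops, fixing $\Rsr_0(\tilde{\Pi})=\{e\}\cup\{\text{vertices of }z_e\}$, and evaluating term by term with Lemma \ref{intPi} together with $\theta_{\iota x}=-\theta_x$, $\theta_{\iota y}=-\theta_y$ --- is precisely that computation, carried out with more care than the paper supplies (your sample case $\la[\pi'_\trc(e)],[\theta_y]\ra=-2$ checks out, as do the remaining entries by the same bookkeeping). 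The only caveat, which is an ambiguity in the proposition's statement rather than in your argument, is that edges joining $e$ to $z_e$ must be taken with initial point $e$ for the ``otherwise $0$'' lines to hold literally.
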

\begin{proof}
    This is a direct compute from the model $\Pi$.
\end{proof}
We have seen on the model of $\Sigma$, each $\delta_x$ with $x\in\Csr_0(\tilde{\Sigma})$ admits a "dual" class such that they span $H_1(\Sigma)$ together. The similar construction can be made for the model $\Pi$. However they will only span a primitive sublattice of $H_1(\Pi)$.
\begin{prop}\label{dulintPi}
    For each vertex $x\in \Rsr_0{(\tilde{\Pi})}$, there exist a 1-cycle $[\varepsilon_x]$ such that $\la [\varepsilon_x],\theta_{x'}\ra=1$ if and only if $x'=x$ and otherwise it is $0$ for all $x'\in\Rsr_{0}(\tilde{\Pi})$. In particular, if $x\neq e$ we could require additional conditions for $\varepsilon_x$ such that
    \begin{enumerate}
        \item $\la [\pi_{\edge}(e)],[\varepsilon_x]\ra=0$ and
        \item $\la [\pi'_{\edge}(e)],[\varepsilon_x]\ra=-1$.
    \end{enumerate}
\end{prop}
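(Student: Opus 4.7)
The plan is to construct $\varepsilon_x$ as a transverse arc across the removed penta-pyramid at $x$, closed up via the antipodal identification on the boundary, and then to secure the two extra intersection conditions in the case $x \neq e$ by adding a suitable integer combination of truncation loops. Since loops of the same type have zero mutual intersection (Lemma \ref{intPi}), adding truncation loops leaves the duality pairings $\la \varepsilon_x, \theta_{x'}\ra$ unaffected, so the second step does not destroy the property established in the first.

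For the first claim, fix $x\in\Rsr_0(\tilde{\Pi})$ and choose a simple oriented path $\gamma$ in $\hat{\Pi}$ joining a boundary point $p$ on the pentagram around $x$ to its antipodal point $\iota p$ on the pentagram around $\iota x$, chosen disjoint from every other boundary pentagram and transverse to the $1$-skeleton. Under the antipodal gluing, $\gamma$ descends to a closed $1$-cycle $\varepsilon_x^0$ in $\Pi$ that meets $\theta_x$ transversally in the single point $[p]=[\iota p]$ and is disjoint from all other truncation loops. Reversing the orientation of $\gamma$ if needed yields $\la[\varepsilon_x^0],[\theta_{x'}]\ra=\delta_{x,x'}$ for every $x'\in\Rsr_0(\tilde{\Pi})$, which proves the first claim up to the choice of representative.

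Now suppose $x\neq e$. Replacing $\varepsilon_x^0$ by $\varepsilon_x^0 + \sum_{x'\in\Rsr_0(\tilde{\Pi})} a_{x'}\theta_{x'}$ preserves the $\theta_{x'}$-pairings, while modifying the pairings with edge-type loops, and in particular with $[\pi_\edge(e)]=[\theta_\edge]$ and $[\pi'_\edge(e)]=[\theta'_\edge]$, by the quantities $\sum a_{x'}\la\theta_{x'},\theta_\edge\ra$ and $\sum a_{x'}\la\theta_{x'},\theta'_\edge\ra$ respectively. The additional conditions are therefore achievable provided the adjustment map
\begin{equation*}
    \Zds^{\Rsr_0(\tilde{\Pi})}\;\longrightarrow\;\Zds^2,\qquad (a_{x'})\;\longmapsto\;\Bigl(\sum_{x'} a_{x'}\la\theta_{x'},\theta_\edge\ra,\ \sum_{x'} a_{x'}\la\theta_{x'},\theta'_\edge\ra\Bigr)
\end{equation*}
is surjective onto the target $\bigl(-\la[\varepsilon_x^0],[\theta_\edge]\ra,\ -1-\la[\varepsilon_x^0],[\theta'_\edge]\ra\bigr)\in\Zds^2$.

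The main obstacle is therefore the verification of this surjectivity, which reduces by Lemma \ref{intPi} to a finite combinatorial check on the vertex--edge incidences in the great dodecahedron: the pairing $\la\theta_{x'},\theta_y\ra$ is nonzero precisely when $x'$ is a vertex of $y$ or of $\iota y$, and then equals $\pm 1$ according as $x'$ is the terminal or initial point. I plan to isolate two vertices $x'_1,x'_2\in\Rsr_0(\tilde{\Pi})$ adjacent to $e$ whose $2\times 2$ block of pairings with $(\theta_\edge,\theta'_\edge)$ is unimodular; setting the other coefficients to zero and solving the resulting $2\times 2$ system yields the desired $(a_{x'})$. A careful local picture around the vertex $e$ of the great dodecahedron, together with the explicit defining sums for $\theta_\edge$ and $\theta'_\edge$, should make this block unimodularity transparent.
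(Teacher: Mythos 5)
Your first paragraph (the construction of a dual class $\varepsilon_x^0$ meeting $\theta_x$ once and no other truncation loop) is fine and is essentially what the paper does for the first claim. The gap is in the correction step. The ``adjustment map'' you rely on is provably not surjective, and the unimodular $2\times 2$ block you plan to isolate does not exist. By Proposition \ref{intedtrPi} (which is exactly the finite incidence check you defer to Lemma \ref{intPi}), the pairings of the truncation loops with $\pi_\edge(e)$ and $\pi'_\edge(e)$ are $(\pm 5,0)$ for $x'=e$ and $(\mp 1,\pm 2)$ for every $x'\neq e$. Thus all rows of your matrix indexed by vertices adjacent to $e$ are \emph{identical}, every $2\times2$ minor equals $0$ or $\pm 10$, and the image of the adjustment map is the index-$10$ sublattice of $\Zds^2$ generated by $(5,0)$ and $(1,-2)$. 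In particular the second coordinate of any adjustment is even, so no combination of truncation loops can ever change the parity of $\la\pi'_\edge(e),[\varepsilon_x]\ra$; the required value $-1$ is odd, and whether your initial arc already has odd pairing with $\pi'_\edge(e)$ depends entirely on how it is routed through the faces, which your first step leaves unconstrained. (There is also a residual mod-$5$ condition on the first coordinate.) So the problem is not reduced to a routine solvable $2\times2$ system: the congruence class of $\bigl(\la\pi_\edge(e),[\varepsilon_x^0]\ra,\la\pi'_\edge(e),[\varepsilon_x^0]\ra\bigr)$ modulo the index-$10$ adjustment lattice must be computed for an explicit arc, and that is precisely the content you have not supplied.

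The paper avoids this by building both conditions into the arc itself. Using the intersection graph of the $2$-cells of $\tilde\Pi$ (Figure \ref{intdode}), it routes $\hat\varepsilon_a$ from a point $p_a\in z_e\cap\theta_a$ to $\iota p_a$ along a shortest chain of faces while avoiding small bands around all the other pentagrams; this forces the arc to cross edge-type $1$-cells only on edges $y$ with $\ini(y)\in z_e$ and $\tm(y)\in z_{\iota e}$, and to cross exactly one such cell with multiplicity one. Since $\theta_\edge$ is supported on the edges emanating from $e$, this gives $\la\pi_\edge(e),[\varepsilon_a]\ra=0$ outright, and the single crossing contributes $\pm 1$ to $\tfrac12\sum_y\la\cdot\,,\theta_y\ra=\la\cdot\,,\theta'_\edge\ra$, giving $-1$ after orienting. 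To repair your argument you would need to replace the surjectivity claim by such an explicit routing (or some other mechanism that certifies the odd parity and the mod-$5$ condition for $\varepsilon_x^0$ from the start).
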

\begin{proof}
    The proof for the first claim is clear. The construction above made the $\Pi$ a genus 10 Riemann surface and each of the six loops of truncation type represents a generators of $\pi_1(\Sigma)$ which is canonical. Hence their exist "dual" class $[\varepsilon_x]$ such that $\la [\varepsilon_x],\theta_{x'}\ra=1$ if and only if $x'=x$.
    \begin{figure}\label{intdode}
        \centering
        {\includegraphics[width=0.2\textwidth]{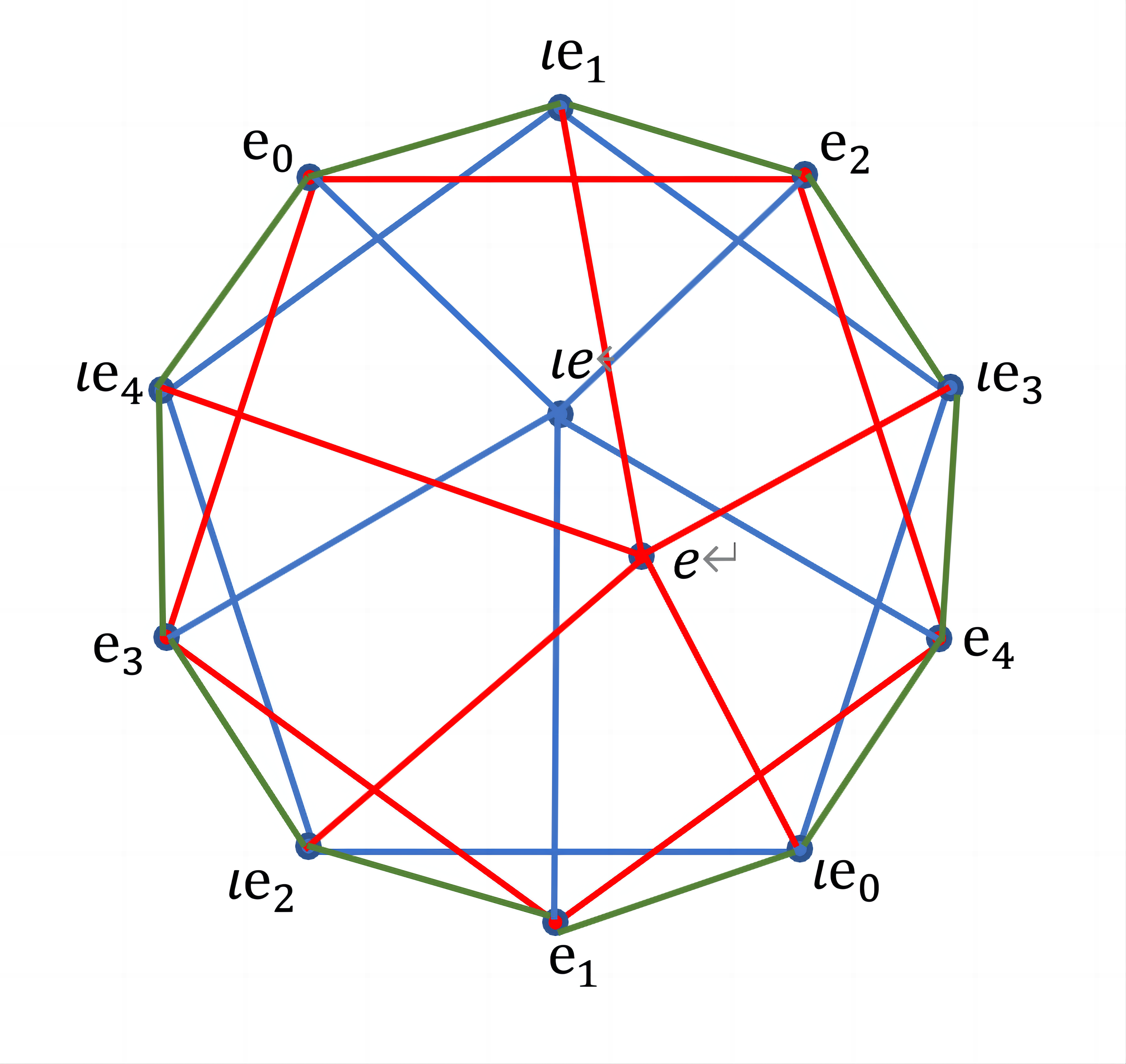}}
        \caption{\small{The intersection graph of 2-cells on $\tilde{\Pi}$. The vertices represents the 2-cells on $\tilde{\Pi}$ and two vertices are joined by an edge if the corresponding 2-cells intersect through an edge of $\tilde{\Pi}$.}}
    \end{figure}
    The proof for the second assertion is a direct construction. Observe that the intersection graph of 2-cells on $\tilde{\Pi}$ is as Figure \ref{intdode} where the vertices in the graph represent the 2-cells of $\tilde{\Pi}$ and two vertices are connected by an edge if and only if the 2-cells of $\tilde{\Pi}$ they represented intersect at a 1-cell of $\tilde{\Pi}$. Hence if we starting from the 2-cell $z_e$, we can reach $\iota z_e$ by crossing at least three 2-cells. From this observation, let $a\in \Rsr_0(\tilde{\Pi})$ be any fixed vertex which is not $e$. Let us choose a point $p_a$ lies on both $z_e$ and $\theta_a$. We also choose a small open band $\hat{U}_x$ of $\theta_{\iota x}$ in $\hat{\Pi}$ for each $x\in \Csr_0(\tilde{\Pi})$.
    We will let the path $\hat{\varepsilon}_a$ start from $p_a$ walk through the "shortest" path mentioned above while avoiding $\cup_{x\neq \pm a}U_x$ and ending in $\iota p_a$. Note that if $y$ is a 1-cell of edge type of $\hat{\Pi}$, $\hat{\varepsilon}_a$ intersects with $y$ only if $y$ has boundary point one on $z_e$ and another one on $z_{\iota e}$. The intersection point can be modified to lie in $U_{\iota a}$ and multiplicity is one.
    Hence from construction that the image $\varepsilon_a$ of $\hat{\varepsilon}_a$ in $\Pi$ will be a closed loop and the intersection numbers are as listed.
\end{proof}

\begin{proof}
    \textbf{(Proof of the Proposition \ref{Z1Pigen})}
    The proof for this Proposition is similar to the proof of Proposition \ref{OostrZ1}. We claim that the $\Zds$-module $\Hom_{\Zds\Isr}(E_o,Z_1(\Pi))$ is free generated by $\pi_{\bound}$, $\pi'_{\bound}$, $\pi_{\trc}$, $\pi'_{\trc}$, $\pi_{\edge}$ and $\pi$. This is showed by counting on the coefficients of 1-cells on $\Pi$.
    %We claim a little more general result that the $\Zds$-module $\Hom_{\Zds\Isr}(E_o,Z_1(\Pi))$ is free generated by $\pi_{\bound}$, $\pi'_{\bound}$, $\pi_{\trc}$, $\pi'_{\trc}$, $\pi_{\edge}$ and $\pi$. It is clear that the $\Qds$-dimension of $\Hom_{\Qds\Isr}(E_{\Qds},Z_{1,\Qds}(\Pi))$ is 6. Hence if the claim didn't holds, there exist $\{a_1,\cdots,a_6\}\subset\Qds$ with at least one of them is not an integer such that the value at $e$ of the the following linear combination lie in $Z_1(\Pi)$.
    %\begin{equation*}
    %    a_1\pi_{\bound}(e)+a_2\pi'_{\bound}(e)+a_3\pi_{\trc}(e)+a_4\pi'_{\trc}(e)+a_5\pi_{\edge}(e)+a_6\pi(e)
    %\end{equation*}
    %By counting the coefficients of 1-cells on $\Pi$, all $a_i$s are integers. This is a contradiction! Therefore the cokernal of $i_{\ast}$ is a free abelian group generated by $\pi_{\trc}$, $\pi'_{\trc}$, $\pi_{\edge}$ and $\pi$.
    %By counting on the coefficients of the 1-cells of edge type of $\Pi$ lie on $z_e$ and those has $e$ as initial point, we could get that $a_{\bound}$ and $a_{\edge}$ are both integers. The coefficients of 1-cells of edge type on $\Pi$ that has initial points on $z_e$ and terminal points on $\iota z_e$ implies that $a_2$ is an integer.

    For the second part, we need to prove that the image of $\pi_{\edge}$, $\pi'_{\edge}$, $\pi_{\trc}$, $\pi$ generates the module $\Hom_{\Zds\Isr}(E_o,H_1(\Pi))$. This is done by checking the conditions in Lemma \ref{genhom}.
\end{proof}

\begin{cor}\label{Cor:PigenSL2O}
    Let $p$ is the natural map $Z_1(\Pi)\to H_1(\Pi)$ as above, the $\Isr$-equivariant morphism $U_{\Pi}$ and $V_{\Pi}$ in $\Hom_{\Zds\Isr}(E_o,H_1(\Pi))$ be defined as $U_{\Pi}:=p\circ \pi_{\trc}$ and $V_{\Pi}:=p\circ \pi$. The $\Ocal_o$-module $\Hom_{\Zds\Isr}(E_o,H_1(\Pi))$ is free $\Ocal_o$-module of rank two with generators $U_{\Pi}$ and $V_{\Pi}$.
\end{cor}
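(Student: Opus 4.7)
The plan is to mirror the proof of Corollary~\ref{Cor:SiggenSL2O} for the model $\Sigma$, since the ingredients for $\Pi$ are completely parallel. By Proposition~\ref{Z1Pigen} the map $p_*$ is surjective and the cokernel of $i_*$ is $\Zds$-free on $\pi_{\trc},\pi'_{\trc},\pi_{\edge},\pi$, so the four composites $p\circ\pi_{\trc}$, $p\circ\pi'_{\trc}$, $p\circ\pi_{\edge}$ and $p\circ\pi$ form a $\Zds$-basis of $\Hom_{\Zds\Isr}(E_o,H_1(\Pi))$. Since $U_{\Pi}=p\circ\pi_{\trc}$ and $V_{\Pi}=p\circ\pi$ are two of these four generators by definition, it suffices to rewrite the other two as $\Ocal_o$-combinations of $U_{\Pi}$ and $V_{\Pi}$.

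First, the relation $X\pi_{\trc}=\pi'_{\trc}$ from Equation~\eqref{Xpitrc} gives $p\circ\pi'_{\trc}=XU_{\Pi}$ at once. For $p\circ\pi_{\edge}$ I would proceed in two steps. Since $\pi_{\bound}$ and $\pi'_{\bound}$ take values in $B_1(\Pi)$, they vanish under $p$, so unfolding $\pi=\tfrac12(\pi'_{\bound}+\pi_{\trc}-\pi'_{\trc}+\pi'_{\edge})$ and substituting yields
\begin{equation*}
    p\circ\pi'_{\edge}=(X-1)U_{\Pi}+2V_{\Pi}.
\end{equation*}
Then the relation $X\pi'_{\edge}=2\pi_{\edge}+\pi'_{\edge}$ from Equation~\eqref{Xpiedge}, combined with the identity $X^2=5$ from Proposition~\ref{ringOo}, lets one solve for
\begin{equation*}
    p\circ\pi_{\edge}=(3-X)U_{\Pi}+(X-1)V_{\Pi}.
\end{equation*}

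These identities show that $U_{\Pi}$ and $V_{\Pi}$ generate $\Hom_{\Zds\Isr}(E_o,H_1(\Pi))$ as an $\Ocal_o$-module. To upgrade this to an $\Ocal_o$-basis, I would invoke a rank count: the $\Zds$-rank of $\Hom_{\Zds\Isr}(E_o,H_1(\Pi))$ is four, while $\Ocal_o U_{\Pi}+\Ocal_o V_{\Pi}$ has $\Zds$-rank at most four with equality exactly when $U_{\Pi}$ and $V_{\Pi}$ are $\Ocal_o$-linearly independent; equality holds by the previous step, which produces the free basis. No conceptual obstacle appears here, since the whole pattern is prescribed by the $\Sigma$-case and Propositions~\ref{Zbodpgmgen} and~\ref{Z1Pigen}; the only mild care is bookkeeping the factor $\tfrac12$ in the definition of $\pi$ and applying $X^2=5$ correctly when reducing $(X-1)^2$ in the computation of $p\circ\pi_{\edge}$.
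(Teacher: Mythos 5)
Your proposal is correct and follows essentially the same route as the paper: use Proposition~\ref{Z1Pigen} to get the $\Zds$-basis $p\circ\pi_{\trc},p\circ\pi'_{\trc},p\circ\pi_{\edge},p\circ\pi$, then express $p\circ\pi'_{\trc}$, $p\circ\pi'_{\edge}$ and $p\circ\pi_{\edge}$ over $\Ocal_o$ in terms of $U_{\Pi}$ and $V_{\Pi}$ via Equations~\eqref{Xpitrc} and~\eqref{Xpiedge}, and conclude freeness by a rank count; your computed coefficients agree with Equation~\eqref{EQ:PigenSL2O}. You even cite the correct proposition where the paper's proof mistakenly refers to the $\Sigma$-analogue.
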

\begin{proof}
    We have proved in Proposition \ref{OostrZ1}, the $\Zds$-module $\Hom_{\Zds\Isr}(E_o,H_1(\Sigma))$ is freely generated by the image of $\pi_{\trc}$, $\pi'_{\trc}$, $\pi_{\edge}$ and $\pi$.
    Since the images of $\pi_{\bound}$ and $\pi'_{\bound}$ vanishes in $H_1(\Pi)$, we have $V_{\Pi}=\frac{1}{2}(p\circ\pi'_{\edge}+p\circ\pi_{\trc}-p\circ\pi'_{\trc})$. Combined with the Equations \eqref{Xpitrc} and \eqref{Xpiedge}, we have the following
    \begin{equation}\label{EQ:PigenSL2O}
        \begin{aligned}
             & p\circ\pi'_{\trc}     & = & XU_{\Pi}                   \\
             & p\circ\pi_{\edge}     & = & -(X-3)U_{\Pi}+(X-1)V_{\Pi} \\
             & p\circ\sigma'_{\edge} & = & (X-1)U_{\Pi}+2V_{\Pi}
        \end{aligned}
    \end{equation}
    Moreover it is clear to check that $U_{\Pi}$ and $V_{\Pi}$ are not linearly equivalent over $\Ocal_o$. These facts imply the Corollary.
\end{proof}

\begin{rmk}
    \textbf{(Other Models of the Bring's Curve)}
    In the article \cite{riera1992period}, G.\ Riera and R.\ Rodriguez introduced a hyperbolic model $\tilde{\Pi}^{\hyp}$ of the Bring's curve. This model also appears with great importance in \cite{braden2012bring}.
    It is a non-euclidean 20-gon lie on the Poincare's disk with the edges identified as in the Figure \ref{hypemod1}. It is known that the polygon's vertices fall into three equivalence classes $P_1$, $P_2$ and $P_3$ which is marked in the Figure \ref{hypemod1} and the genus of the curve is 4. The 20-gon can be tessellated by 240 triangles (or 120 double triangles) with interior angles $\frac{\pi}{5}$, $\frac{\pi}{4}$ and $\frac{\pi}{2}$ which is named as a \textit{(2,4,5)-triangle}. Hence it is clear to see that the tessellation of the 20-gon $\tilde{\Lambda}$ has 112 vertices which coming with three types.
    \begin{enumerate}
        \item The intersection of 4 (2,4,5)-triangles, the total number is 60,
        \item The intersection of 8 (2,4,5)-triangles, the total number is 30,
        \item The intersection of 10 (2,4,5)-triangles, the total number is 24.
    \end{enumerate}
    The $\Scal_5$-symmetry is given by permuting the double triangles.
    There are two kinds of regular hyperbolic pentagons on $\tilde{\Pi}^{\hyp}$. Twenty-four of them, which we call \textit{$\frac{\pi}{2}$-pentagon}, has inner angle $\frac{\pi}{2}$ which centered at the points of the third type and their vertices are always the points of the second type.
    Another twenty-four of them, which we call \textit{$\frac{2\pi}{5}$-pentagon}, has inner angle $\frac{2\pi}{5}$. Both centers and vertices are the points of the third type and the the midpoints of edges are of the second type.

    The hyperbolic model admits a Euclidean realization namely the great dodecahedron. The realization map is constructed by mapping 2-cells of $\tilde{\Pi}$ to the $\frac{2\pi}{5}$-pentagons in an $\Isr$-equivariant way.
    The points of the third type are divided into two disjoint 12-elements-sets, one is the images of vertices of $\tilde{\Pi}$ and another one is the images of barycenters of the faces. The $\iota$-map is induced from $\tilde{\Pi}$ in a natural way. We could remove in an $\Isr$-equivariant manner a small regular $\frac{\pi}{2}$-pentagon at each "vertices of $\tilde{\Pi}$". And identifying $\iota$ to get a model of genus 10 $\Isr$-curves.
    The advantage for this model is that we could see clearly the $\Scal_5$-symmetry on the Bring's curve. Note that the orientations of $\frac{\pi}{4}$ at the points of second type are only hyperbolic automorphisms and they are not euclidean.
    \begin{figure}
        \centering
        {\includegraphics[width=0.2\textwidth]{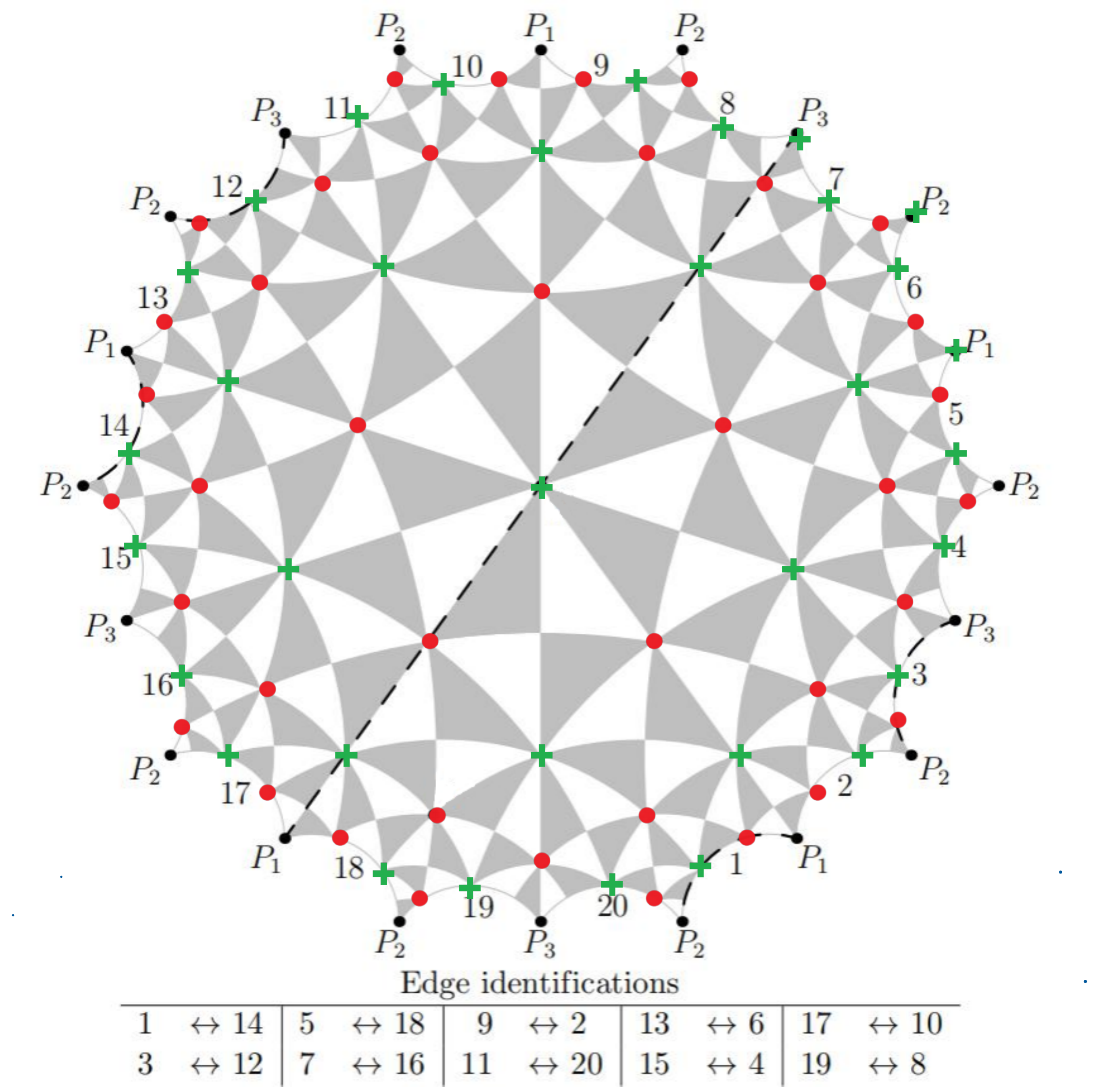}}
        \caption{\small{The edges of the 20-gon are identified as below. The points of second type are marked with \textcolor{red}{red point} and the points of third type are marked with \textcolor{green}{green plus}. The Figure is modified from the Figure 2 in \cite{braden2012bring}}}
        \label{Hyper-mod-Bri}
    \end{figure}
\end{rmk}

%%%%%%%%%%%%%%%%%%%%%%%%%%%%%%%%%%%%%%%%%%%%%%%%%%%%%%%%%%%%%%%%%%
%%%%%%%%%%%%%%%%%%%%%%%%%%%%%%%%%%%%%%%%%%%%%%%%%%%%%%%%%%%%%%%%%%
%%%%%%%%%%%%%%%%%%%%%%%%%%%%%%%%%%%%%%%%%%%%%%%%%%%%%%%%%%%%%%%%%%
%%%%%%%%%%%%%%%%%%%%%%%%%%%%%%%%%%%%%%%%%%%%%%%%%%%%%%%%%%%%%%%%%%
%%%%%%%%%%%%%%%%%%%%%%%%%%%%%%%%%%%%%%%%%%%%%%%%%%%%%%%%%%%%%%%%%%
%%%%%%%%%%%%%%%%%%%%%%%%%%%%%%%%%%%%%%%%%%%%%%%%%%%%%%%%%%%%%%%%%%
%%%%%%%%%%%%%%%%%%%%%%%%%%%%%%%%%%%%%%%%%%%%%%%%%%%%%%%%%%%%%%%%%%
%%%%%%%%%%%%%%%%%%%%%%%%%%%%%%%%%%%%%%%%%%%%%%%%%%%%%%%%%%%%%%%%%%

\section{Local Monodromy on the $E$-part}
Let us recall some basic ideas that we used in \cite{looijenga2021monodromy} and which is also useful in here.
Recall that on $\Sigma$ resp. $\Pi$ we defined a family of complex structures $J_\tau$ resp.$I_{\tau}$ with $\tau\in (0,1)$ resp. $\tau\in (0,1)$ which defined a path $\gamma_{\Sigma}: (0,1)\to \Bsr^\circ$ in the base of the Winger pencil  traversing the positive interval $(\infty, \frac{27}{5})$ resp. a path $\gamma_{\Pi}: (0,1)\to \Bsr^\circ$ in the base of the Winger pencil traversing the positive interval $(-1, \infty)$. This path had a continuous extension to $[0,1]$ resp.$[0,1]$ that gave rise to the stable degenerations $\Sigma_\edge$ (for $\gamma_{\Sigma}(0)=\infty$) and $\Sigma_\trc$ (for $\gamma_{\Sigma}(1)=\frac{27}{5})$ resp.$\Pi_\edge$ (for $\gamma_{\Pi}(0)=\infty$) and $\Pi_\trc$ (for $\gamma_{\Pi}(1)=-1)$.
We will determine the monodromies of these degenerations.
When determining the local monodromies given by $\Sigma$, it is convenient to regard $\gamma_{\Sigma}|(0,1)$ as a base point for $\Bsr^\circ$ and denote the fundamental group of $\Bsr^\circ$ with this base point by $\pi_{\Sigma}$.
Similarly when determining the local monodromies given by $\Pi$, we will regard $\gamma_{\Pi}|(0,1)$ as a base point for $\Bsr^\circ$. In this case we denote the fundamental group of $\Bsr^\circ$ with this base point by $\pi_{\Pi}$. They are parts of the monodromy representation of $\pi$ on $H_1(\Sigma)$. Clearly that $\pi_{\Sigma}$ and $\pi_{\Pi}$ are conjugate to each other. Hence We will denote this group by $\pi$ if there is no ambiguities.
%The following was shown in \cite{zi2021geometry}.

%and work over the field $\Kcal$ instead of $\Qds$.

Let $t\in\Bsr^{\circ}$, recall that we have an isotropic decomposition
\begin{equation}\label{candecom}
    H_1(C_t;\Qds)\cong(V_\Qds
    \otimes \Hom_{\Qds\Isr}(V_\Qds,H_1(C_t,\Qds)))\oplus(E_\Qds\otimes \Hom_{\Qds\Isr}(E_\Qds,H_1(C_t, \Qds)))
\end{equation}
Since the monodromy action will preserve this decomposition, we have a monodromy representation
of $\pi$ on both $\Hom_{\Qds\Isr}(V_\Qds,H_1(C_t;\Qds))$ and $\Hom_{\Qds\Isr}(E_\Qds,H_1(C_t; \Qds))$.
We have already determined the first type, together with an integral version of it $\Hom_{\Zds\Isr}(V_o,H_1(C_t))$ in \cite{looijenga2021monodromy}.
Here we will focus on the second type and its integral version i.e. $\Hom_{\Zds\Isr}(E_o,H_1(C_t))$. This integral global monodromy representation will be denoted by $\rho_{E_o}$. The space $\Hom_{\Qds\Isr}(E_\Qds,H_1(C_t,\Qds))$ is of dimension four over $\Qds$ since $E_{\Qds}$ admits non-trivial endomorphisms. However it will be of dimension two if treated as $\Kcal$-vector space, where $\Kcal$ is the endomorphism field of $E_\Qds$ defined in Corollary \ref{fieldKo}.
As we observed in Remark \ref{homsym} that the symplectic form on $H_1(C_t; \Qds)$ and the inner product on $E_\Qds$ give rise to a symplectic form on $\Hom_{\Qds\Isr}(E_\Qds, H_1(C_t; \Qds))$. The monodromies should keep this symplectic form, hence $\rho_{E_o}$ takes its values in $\Sp(1, \Ocal_o)\cong \SL_2(\Ocal_o)$.

Now let $C_s$ represents a singular member of the Winger pencil and $U_s\subset \Bsr$ a small disk-like neighborhood of $s$ (so that $C_s\subset \Wsr_{U_s}$ is a homotopy equivalence), we will determine $\rho_{E_o}$ locally for the degenerations $\Sigma_\trc$, $\Sigma_\edge$, $\Pi_{\trc}$, $\Pi_{\edge}$ and do a local discussion for degeneration of $3K$ in this section. If we choose $\alpha_s$ where $s\in\{\frac{27}{5},0,-1,\infty\}$ be a simple closed loop around $s$ only, the local fundamental groups is isomorphic to $\Zds$ with generator represented by $\alpha_s$ in these local cases. Hence the local monodromy around $s$ is determined by it value on $[\alpha_s]$.
The requirements of $U_s$ implies that for any $t\in U-\{s\}$ the natural map $H_1(C_t)\to H_1(\Wsr_{U})\cong H_1(C_s)$ is onto. So if $L$ denotes the kernel, then we get the short exact sequence
\begin{equation}\label{kHH}
    0\to L\to H_1(C_t)\to H_1(C_s)\to 0
\end{equation}
In case $C_s$ has only nodal singularities, $L$ is an $\Isr$-invariant isotropic primitive sublattice generated by the vanishing cycles. The monodromies will preserve this exact sequence and acts non-trivially only on the middle term. If we denote the set of vanishing cycles by $\Delta$ and $x$ be a class in $H_1(C_t)$, then the monodromies of $[\alpha_s]$ is given by the following well-known Picard-Lefschetz formula
\begin{equation}
    \rho_s(\alpha_s)(x)-x=\sum_{l\in\Delta/\{\pm 1\}}\la x,l\ra l
\end{equation}
These are the basic tools we will use in this section.

\subsection{The Monodromies of the Degenerations of $\Sigma$}\label{LmSig}
In this section, we will determine the local monodromies at the end points of $\gamma_{\Sigma}$. We have proved in Corollary \ref{Cor:SiggenSL2O} that $\Hom_{\Zds\Isr}(E_o,H_1(\Sigma))$ is free generated by $U_{\Sigma}$ and $V_{\Sigma}$ as $\Ocal_o$-module.
So it is natural to express the local monodromies in terms of these generators in this section. We will denote the local monodromies by $\rho_{\Sigma,\trc}$ and $\rho_{\Sigma,\edge}$ respectively and we will brief $p\circ\sigma_{\trc}$ to $\sigma_{\trc}$ which is the same for other symbols, since we will always work on $H_1(\Sigma)$ in this section.
The Theorem \ref{Thm:monoSig} below will give the local monodromy in each case.

\begin{thm}\label{Thm:monoSig}
    The monodromy $\rho_{\Sigma,\trc}$ fixes $U_{\Sigma}$ and brings $V_{\Sigma}$ to $(X-2)U_{\Sigma}+V$. The monodromy $\rho_{\Sigma,\edge}$ brings $U_{\Sigma}$ to $3U_{\Sigma}-(X+1)V_{\Sigma}$ and brings $V_{\Sigma}$ to $(X-1)U_{\Sigma}-V_{\Sigma}$.
\end{thm}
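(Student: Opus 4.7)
The strategy is a direct application of the Picard--Lefschetz formula to the two degenerations identified in Proposition \ref{degsig}. Since the monodromy preserves the $\Isr$-structure on $H_1(\Sigma)$, its action on $\Hom_{\Zds\Isr}(E_o, H_1(\Sigma))$ is by post-composition, so it suffices to compute the action on the homology classes $U_\Sigma(e)$ and $V_\Sigma(e)$ and then rewrite each answer as an $\Ocal_o$-linear combination of $U_\Sigma$ and $V_\Sigma$ via the identities \eqref{EQ:SiggenSL2O} from the proof of Corollary \ref{Cor:SiggenSL2O}.

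For $\rho_{\Sigma,\trc}$ the vanishing cycles are the 1-cycles of truncation type $\delta_x$, indexed up to sign by $\Rsr_0(\tilde\Sigma)$. Because $U_\Sigma(e)=\delta_\trc$ is itself of truncation type, Lemma \ref{lemma:intsce} gives $\rho_{\Sigma,\trc}(U_\Sigma)=U_\Sigma$. In $V_\Sigma(e)=\tfrac{1}{2}(\sigma'_\bound(e)+\sigma'_\edge(e)+\sigma_\trc(e)+\sigma'_\trc(e))$ the boundary part is orthogonal to every cycle and the truncation-type parts are orthogonal to the $\delta_x$, so only the $\tfrac{1}{2}\sigma'_\edge(e)$ term contributes. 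Lemma \ref{speint} then identifies the Picard--Lefschetz correction with $\sum_{x\in V_e}\delta_x=\sigma'_\trc(e)$, which by the first line of \eqref{EQ:SiggenSL2O} is $(X-2)U_\Sigma$, exactly the stated formula.

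For $\rho_{\Sigma,\edge}$ the vanishing cycles are the 1-cycles of edge type $\delta_y$, indexed by representatives $y\in \Rsr_1(\tilde\Sigma)$. Using Lemma \ref{speint} once more, the Picard--Lefschetz correction for $U_\Sigma(e)=\sigma_\trc(e)$ comes entirely from $y\in E_e$ and equals $-\sigma_\edge(e)$, while the correction for $V_\Sigma(e)$ receives cancelling $\sigma_\trc$ and $\sigma'_\trc$ contributions over $E_e$ together with a residual $\sigma'_\trc$ contribution over $E'_e$ that, after the factor $\tfrac{1}{2}$ in the definition of $\sigma$ is absorbed, equals $-\sigma'_\edge(e)$. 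The second and third lines of \eqref{EQ:SiggenSL2O} convert these corrections into $3U_\Sigma-(X+1)V_\Sigma$ and $(X-1)U_\Sigma-V_\Sigma$ respectively. The only genuine difficulty is combinatorial bookkeeping: Picard--Lefschetz must be summed over a single system of representatives for the $\iota$-action (respectively the $\iota\times(-1)$-action) on vanishing cycles, and the $\tfrac{1}{2}$ in front of $\sigma$ must be matched precisely against the double count over $E'_e$. With that in place the theorem reduces to Lemmas \ref{lemma:intsce}, \ref{speint} and the module relations already established.
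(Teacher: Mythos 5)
Your proposal is correct and follows essentially the same route as the paper's own proof: apply the Picard--Lefschetz formula to each degeneration, read off the relevant intersection numbers from Lemma \ref{speint} (with only the $\sigma'_{\edge}$ term of $V_\Sigma$ contributing at the truncation end and the $\sigma_{\trc},\sigma'_{\trc}$ terms at the edge end), and convert the resulting corrections $\sigma'_{\trc}(e)$, $-\sigma_{\edge}(e)$, $-\sigma'_{\edge}(e)$ back into the $\Ocal_o$-basis $U_\Sigma, V_\Sigma$ via the relations \eqref{EQ:SiggenSL2O}. The only cosmetic difference is that you justify $\rho_{\Sigma,\trc}(U_\Sigma)=U_\Sigma$ by the vanishing of same-type intersections (Lemma \ref{lemma:intsce}) while the paper cites the fact that $\sigma_{\trc}$ lands in the isotropic vanishing sublattice $L_{\Sigma,\trc}$; these are equivalent.
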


Let us recall some facts about the vanishing cycles $L_{\trc}$ resp. $L_{\edge}$ we discussed above before we give the proof of Theorem \ref{Thm:monoSig}.
Let $G_{\Sigma,\edge}$ be the dual intersection graph of $\Sigma_\edge$. It has six vertices and every two vertices are joined by an edge. Hence in this case we get the complete graph with six vertices, i.e. a graph of type $K_6$. If $\hat\Sigma_\edge$ is the normalization of the singular curve $\Sigma_{\edge}$, the set of connected components of $\hat\Sigma_\edge$ is denoted by $\Lsr$, then it has $6$ elements and $\Isr$ acts on it by permutations.
There is a natural homotopy class of maps $\Sigma_\edge\to G_{\Sigma,\edge}$ which induces an isomorphism $H_1(\Sigma_\edge)\to H_{1}(G_{\Sigma,\edge})$.
Recall that $H_1(G_{\Sigma,\edge})$ is free of rank 10, so that the kernel $L_{\Sigma,\edge}$ of $H_1(\Sigma)\to H_1(\Sigma_\edge)$ is in fact a primitive Lagrangian sublattice. The intersection product then identifies $L_{\Sigma,\edge}$ with the dual of $H_1(G_{\Sigma,\edge})$ so that the short exact sequence \eqref{kHH} becomes the following
\begin{equation}\label{0HHH0}
    \xymatrix{
    0\ar[r]& L_{\Sigma,\edge}\ar[r]& H_1(\Sigma)\ar[r]^-{\phi}& L_{\Sigma,\edge}^{\vee}\ar[r]& 0.
    }
\end{equation}
We have proved in \cite{looijenga2021monodromy} the following Lemma.
\begin{lem}\label{inftL}
    The natural homotopy class of maps $\Sigma_\edge\to G_{\Sigma,\edge}$ induces an isomorphism on $H^1$  and
    the  map which assigns to the ordered distinct pair $(l, l')$ in $\Lsr$ the $1$-cocycle on $G_{\Sigma,\edge}$ spanned by the vertices defined by $l$ and $l'$ induces an $\Isr$-equivariant isomorphism $\wedge^2W_0\cong H^1G_{\Sigma,\edge}$. If we call that $H^1(G_{\Sigma,\edge})$ is naturally identified with the vanishing homology of the degeneration $\Sigma$ into $\Sigma_\edge$, then this isomorphism identifies the set $\Delta_{\Sigma,\trc}$ of vanishing cycles with the set of unordered distinct pairs in $\Lsr$.
    Dually, $L_{\Sigma,\edge}^{\vee}=H_1(\Sigma_\edge)$ is as a $\Zds\Isr$-module isomorphic to $\wedge^2 W_o^{\vee}$.
\end{lem}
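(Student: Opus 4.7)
My plan is to establish the three assertions in order: first that $\Sigma_\edge\to G_{\Sigma,\edge}$ is an $H^1$-isomorphism, then the combinatorial identification $H^1(G_{\Sigma,\edge})\cong\wedge^2 W_o$ via the edge-cocycle map, and finally the Picard--Lefschetz match of vanishing cycles with unordered pairs together with its dual. To pin down the topology, $\Sigma_\edge$ is a union of six rational components $\{C_l\}_{l\in\Lsr}$ meeting pairwise in 15 nodes (one per unordered pair in $\Lsr$), so $G_{\Sigma,\edge}=K_6$. The natural map $\Sigma_\edge\to G_{\Sigma,\edge}$ collapses each $C_l$ to the vertex $v_l$ and sends a neighborhood of each node to the corresponding edge. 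Applying the normalization Mayer--Vietoris sequence
\begin{equation*}
0\to H^0(\Sigma_\edge)\to H^0(\hat\Sigma_\edge)\oplus H^0(N)\to H^0(\hat N)\to H^1(\Sigma_\edge)\to H^1(\hat\Sigma_\edge)\to 0,
\end{equation*}
together with $H^1(\hat\Sigma_\edge)=0$ (each component is a sphere) and the ranks $1,6,15,30$ of the other terms, one reads off $\rk H^1(\Sigma_\edge)=10=\rk H^1(K_6)$, and one checks directly that the resulting presentation matches the cellular presentation $C^1(K_6)/B^1(K_6)$, giving the first iso.

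For the combinatorial identification $H^1(K_6)\cong\wedge^2 W_o$, I would orient the edges of $K_6$ so that $C^0(K_6)=\Zds[\Lsr]$ and $C^1(K_6)\cong\wedge^2\Zds[\Lsr]$, with $l\wedge l'$ corresponding to the cochain of value $+1$ on the ordered edge $l\to l'$. A direct computation gives $d(\mathbf 1_l)=\sigma\wedge l$ where $\sigma:=\sum_{l''\in\Lsr} l''$, so $B^1(K_6)=\sigma\wedge\Zds[\Lsr]\subset\wedge^2\Zds[\Lsr]$. Taking second exterior powers of the defining sequence $0\to\Zds\to\Zds[\Lsr]\to W_o\to 0$ yields, exactly as in \eqref{exactE},
\begin{equation*}
0\to W_o\xrightarrow{\bar l\mapsto \sigma\wedge l}\wedge^2\Zds[\Lsr]\to\wedge^2 W_o\to 0,
\end{equation*}
and the left subobject is precisely $B^1(K_6)$; hence $H^1(G_{\Sigma,\edge})\cong\wedge^2 W_o$ as $\Zds\Isr$-modules, with $l\wedge l'\bmod W_o$ corresponding to the cocycle on the edge $\{l,l'\}$.

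For the last two assertions, Picard--Lefschetz identifies $L_{\Sigma,\edge}$ with the sublattice of $H_1(\Sigma)$ spanned by the 15 vanishing cycles, one per node. The relation that the five vanishing cycles lying on any $C_l$ bound the punctured component $C_l$ minus small disks furnishes the 5 independent linear relations cutting $L_{\Sigma,\edge}$ out of $\Zds^N$, yielding the canonical identification $L_{\Sigma,\edge}\cong H^1(G_{\Sigma,\edge})$; composed with the previous paragraph this realises the claimed bijection between $\Delta_{\Sigma,\edge}$ and unordered distinct pairs in $\Lsr$. Dualising \eqref{0HHH0} and using that $L_{\Sigma,\edge}$ is a Lagrangian in the unimodular lattice $H_1(\Sigma)$, one obtains $H_1(\Sigma_\edge)=L_{\Sigma,\edge}^{\vee}\cong(\wedge^2 W_o)^{\vee}=\wedge^2 W_o^{\vee}$.

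The main obstacle is not the combinatorics but consistent sign-tracking: individual vanishing cycles are defined only up to $\pm 1$, and $\iota$ reverses orientation on $\tilde\Sigma$, so realising the $\Isr$-equivariant bijection with the intrinsically unoriented pairs $\{l,l'\}$ requires fixing orientation conventions for the node-neighborhoods compatibly with $\iota$. Once this bookkeeping is in place the stated isomorphisms are forced, and in any case the result is already recorded with full details in \cite{looijenga2021monodromy}.
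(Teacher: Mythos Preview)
Your argument is correct and complete in outline: the Mayer--Vietoris/normalization computation of $H^1(\Sigma_\edge)$, the identification of $C^1(K_6)$ with $\wedge^2\Zds[\Lsr]$ and of $B^1(K_6)$ with $\sigma\wedge\Zds[\Lsr]$, and the deduction $H^1(K_6)\cong\wedge^2 W_o$ from the exact sequence analogous to \eqref{exactE} are all sound. The paper itself does not prove this lemma at all; it merely records it with the sentence ``We have proved in \cite{looijenga2021monodromy} the following Lemma,'' which you also acknowledge at the end of your proposal. So there is nothing to compare: you have supplied a self-contained proof where the paper only cites one.
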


From this Lemma the short exact sequence \eqref{0HHH0} becomes the following sequence of $\Zds\Isr$-modules.
\begin{equation}\label{WHW}
    \xymatrix{
    0\ar[r]& \wedge^2 W_o\ar[r]\ar[dr]_-{j_{\Sigma,\edge}}& H_1(\Sigma)\ar[r]& (\wedge^2 W_o)^\vee\ar[r]& 0\\
    &&Z_1(\Sigma)/B_1(\Sigma)\ar[u]&&
    }
\end{equation}
Note that $\wedge^2 W_o$ has a single generator as a  $\Zds\Isr$-module, for example $\bar{l} \wedge \bar{l'}$ with $l,l'$ distinct. We have an $\Isr$-isomorphism $\iota_\edge:\wedge^2 W_o\to Z_1(\Sigma)$  which sends $\bar{l}\wedge \bar{l'}$ to the element in $Z_{\edge}(\Sigma)$ with the same stabilizer.
Let us apply the left exact functor $\Hom_{\Zds\Isr}(E_o,\cdot)$ to the short exact sequence \eqref{WHW} and combine it with the exact sequence \eqref{seqph1}
\begin{equation}\label{hominf}
    \xymatrix@C=1pc@R=2pc{
    0\ar[r]& \Hom_{\Zds\Isr}(E_o,\wedge^2 W_o)\ar[r]\ar[dr]_-{j_{\Sigma,\edge,*}}&\Hom_{\Zds\Isr}(E_o,H_1(\Sigma))\ar[r]^-{\phi}&\Hom_{\Zds\Isr}(E_o, \wedge^2 W_o^{\vee})\ar[r]& \Ext_{\Zds\Isr}(E_o,\wedge^2 W_o)\\
    &&\coker(i)\ar[u]&&
    }
\end{equation}
By Proposition \ref{OostrZ1}, the vertical arrow
\begin{equation*}
    \coker(i)=\Hom_{\Zds\Isr}(E_o,Z_1(\Sigma))/\Hom_{\Zds\Isr}(E_o,B_1(\Sigma))\to \Hom_{\Zds\Isr}(V_o,H_1(\Sigma))
\end{equation*}
is an isomorphism.

Likewise at the other end: if $G_{\Sigma,\trc} $ is the dual intersection graph of $\Sigma_\trc$, then the kernel of $H_1(\Sigma)\to H_1(\Sigma_\trc)\cong H_1({\Sigma,\trc})$ is the primitive Lagrangian sublattice $L_{\Sigma,\trc}$ we introduced earlier and we get a similar short exact sequence and a similar description of the associated monodromy $\rho_{\Sigma,\trc}$ in terms of $\Delta_{\Sigma,\trc}$.

The short exact sequence \eqref{kHH} becomes the following short exact sequence of $\Zds\Isr$-modules
\begin{equation}\label{LHLv}
    \xymatrix{
    0\ar[r]& L_{\Sigma,\trc}\ar[r]\ar[dr]_{j_{\Sigma,\trc}}& {H_1(\Sigma,\Zds)}\ar[r]^-{\phi}& L_{\Sigma,\trc}^{\vee}\ar[r]& 0\\
    &&{Z_1(\Sigma)/B_1(\Sigma)}\ar[u]&&
    }
\end{equation}
Here $j_{\Sigma,\trc}$ is the obvious map. Applying the left exact functor $\Hom_{\Zds\Isr}(E_o,\cdot)$ to the short exact sequence \eqref{LHLv} and combine it with the exact sequence \eqref{seqph1}
\begin{equation}\label{hom275}
    \xymatrix@C=1pc@R=2pc{
    0\ar[r]& \Hom_{\Zds\Isr}(E_o,L_{\Sigma,\trc})\ar[r]\ar[dr]_{j_{\Sigma,\trc\ *}}&\Hom_{\Zds\Isr}(E_o,H_1(\Sigma))\ar[r]^-{\phi_*}&\Hom_{\Zds\Isr}(E_o, L_{\Sigma,\trc}^{\vee})\ar[r]& \Ext_{\Zds\Isr}(E_o,L_{\Sigma,\trc})\\
    &&\coker(i)\ar[u]&&
    }
\end{equation}
The vertical arrow is an isomorphism same as above.

\begin{proof}
    \textbf{(Proof of Theorem \ref{Thm:monoSig})}

    By the Proposition \ref{degsig} the images of $\sigma_{\edge}$ and $\sigma'_{\edge}$ lie in $L_{\Sigma,\edge}$ and the image of $\sigma_{\trc}$ and $\sigma'_{\trc}$ lie in $L_{\Sigma,\trc}$. Hence the monodromy $\rho_{\Sigma,\trc}$ fixes  $U_{\Sigma}=\sigma_{\trc}$ and $\sigma'_{\trc}$, while $\rho_{\Sigma,\edge}$ fixes $\sigma_{\edge}$ and $\sigma'_{\edge}$.

    By the Picard-Lefschetz formula
    \begin{equation*}
        \rho_{\Sigma,\trc}(V_{\Sigma}(e))- (V_{\Sigma}(e))                  =  \sum_{\delta\in\Delta_{\Sigma,\trc}/\{\pm1\}}\la [V_{\Sigma}(e)],\delta\ra\delta          =  \frac{1}{2}\sum_{x\in\Rsr_0(\tilde{\Sigma})}\la [\sigma'_{\edge}(e)],\delta_x\ra\delta_x
    \end{equation*}
    Let $x\in \Rsr_2(\tilde{\Sigma})\subset\Csr_2(\tilde{\Sigma})$, from the Lemma \ref{speint} $\la [\sigma'_{\edge}(e)],\delta_x\ra$ equals 2 if $x\notin e$, otherwise it is $0$. Hence $\sum_{x\in\Rsr_0(\Sigma)}\la [\sigma'_{\edge}(e)],\delta_x\ra\delta_x$ equals $\delta'_{\trc}$. Hence we have $\rho_{\Sigma,\trc}(V_{\Sigma})=V_{\Sigma}+\sigma'_{\trc}=(X-2)U_{\Sigma}+V_{\Sigma}$ from Equation \eqref{EQ:SiggenSL2O}.

    Similarly, for $\rho_{\Sigma,\edge}$ we have
    \begin{equation*}
        \begin{aligned}
            \rho_{\Sigma,\edge}(U_{\Sigma}(e))- U_{\Sigma}(e) & = & \sum_{\delta\in\Delta_{\Sigma,\edge}/\{\pm1\}}\la [U_{\Sigma}(e)],\delta\ra\delta & = & \sum_{y\in\Rsr_1(\tilde{\Sigma})}\la [\sigma_{\trc}(e)],\delta_y\ra\delta_y                                \\
            \rho_{\Sigma,\edge}(V_{\Sigma}(e))- V_{\Sigma}(e) & = & \sum_{\delta\in\Delta_{\Sigma,\edge}/\{\pm1\}}\la [V_{\Sigma}(e)],\delta\ra\delta & = & \frac{1}{2}\sum_{y\in\Rsr_1(\tilde{\Sigma})}\la [\sigma_{\trc}(e)]+[\sigma'_{\trc}(e)],\delta_y\ra\delta_y \\
        \end{aligned}
    \end{equation*}
    From the Lemma \ref{speint}, $\la [\sigma_{\trc}(e)],\delta_y\ra$ equals $-1$ if $\ini(y)\in e$ but $\tm(y)\notin e$ otherwise it equals 0. And $\la [\sigma'_{\trc}(e)],\delta_y\ra$ equals $1$ if $\ini(y)\in e$ but $\tm(y)\notin e$, $-2$ if $\ini(y)\notin e$ and $0$ in other cases. Hence we have $\sum_{y\in\Rsr_1(\tilde{\Sigma})}\la [\sigma_{\trc}(e)],\delta_y\ra\delta_y$ is $-\delta_{\edge}$, $\sum_{y\in\Rsr_1(\tilde{\Sigma})}\la [\sigma'_{\trc}(e)],\delta_y\ra\delta_y$ is $\delta_{\edge}-2\delta'_{\edge}$. Therefore we have $\rho_{\Sigma,\edge}(U_{\Sigma})=U_{\Sigma}-\sigma_{\edge}=3U_{\Sigma}-(X+1)V_{\Sigma}$ and $\rho_{\Sigma,\edge}(V_{\Sigma})=V_{\Sigma}-\sigma'_{\edge}=(X-1)U_{\Sigma}-V_{\Sigma}$. This finishes the proof.
\end{proof}

\subsection{The Monodromies of the Degenerations of $\Pi$}

In this section, we will determine the local monodromies at the end points of $\gamma_{\Pi}$.
The Theorem \ref{Thm:monoPi} below will give the local monodromy in each case. Recall that by Corollary \ref{Cor:PigenSL2O}, the module $\Hom_{\Zds\Isr}(E_o,H_1(\Pi))$ is freely generated by $U_{\Pi}$ and $V_{\Pi}$ as $\Ocal_o$-module. So we will express the monodromies $ \rho_{\Pi,\edge}$ and $\rho_{\Pi,\trc}$ in terms of these generators when computing the local monodromies defined by $\Pi$.

\begin{thm}\label{Thm:monoPi}
    The monodromy $\rho_{\Pi,\trc}$ fixes $U_{\Pi}$ and takes $V_{\Pi}$ to $XU_{\Pi}+V_{\Pi}$.
    The monodromy $\rho_{\Pi,\edge}$ brings $U_{\Pi}$ to $(X-2)U_{\Pi}-(X-1)V_{\Pi}$ and $V_{\Pi}$ to $(2X-4)U_{\Pi}+(4-X)V_{\Pi}$.
\end{thm}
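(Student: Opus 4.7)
The plan is to closely follow the pattern of the proof of Theorem~\ref{Thm:monoSig}, now using the $\Pi$-model. By Proposition~\ref{hypeModV}, the vanishing cycles for the degenerations to $\Pi_{\trc}$ and $\Pi_{\edge}$ are respectively $\Theta_{\trc}$ and $\Theta_{\edge}$, so each local monodromy is computed by the Picard--Lefschetz formula applied to $U_{\Pi}(e)$ and $V_{\Pi}(e)$. The required intersection numbers are recorded in Proposition~\ref{intedtrPi}, and once the answer is written in terms of $\theta_{\trc},\theta'_{\trc},\theta_{\edge},\theta'_{\edge}$ it can be converted back into the $\Ocal_o$-basis $\{U_\Pi,V_\Pi\}$ via Equation~\eqref{EQ:PigenSL2O}.

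First I verify the obvious fixed classes. Because $\pi_{\trc}$ and $\pi'_{\trc}$ factor through $Z_{\trc}(\Pi)$, the span of the vanishing cycles of $\Pi\to\Pi_{\trc}$, their images in $H_1(\Pi)$ lie in the corresponding Lagrangian sublattice; in particular $U_{\Pi}=p\circ\pi_{\trc}$ is fixed by $\rho_{\Pi,\trc}$. The symmetric observation shows that $p\circ\pi_{\edge}$ and $p\circ\pi'_{\edge}$ are fixed by $\rho_{\Pi,\edge}$.

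Next I apply Picard--Lefschetz. Writing $V_{\Pi}=\tfrac{1}{2}(p\circ\pi_{\trc}-p\circ\pi'_{\trc}+p\circ\pi'_{\edge})$ in $H_1(\Pi)$ (the $p\circ\pi'_{\bound}$ summand vanishing), the action of $\rho_{\Pi,\trc}$ on $V_{\Pi}$ receives contributions only from $\pi'_{\edge}$, since loops of the same type have zero intersection (Lemma~\ref{intPi}); Proposition~\ref{intedtrPi} then yields $\rho_{\Pi,\trc}(V_{\Pi})-V_{\Pi}=p\circ\pi'_{\trc}=XU_{\Pi}$ after substituting through \eqref{EQ:PigenSL2O}. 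For $\rho_{\Pi,\edge}$ the roles are reversed: the $\pi'_{\edge}$ piece contributes nothing, while the truncation-type pieces produce $-\theta_{\edge}$ in the case of $U_{\Pi}$ (from the single non-vanishing entry $\ini(y)=e$ in Proposition~\ref{intedtrPi}) and, after combining the two non-vanishing cases for $\pi_{\trc}-\pi'_{\trc}$, the expression $-\theta_{\edge}+\theta'_{\edge}$ in the case of $V_{\Pi}$. The final substitutions via \eqref{EQ:PigenSL2O} then give $\rho_{\Pi,\edge}(U_\Pi)=U_\Pi-p\circ\pi_{\edge}=(X-2)U_{\Pi}-(X-1)V_{\Pi}$ and $\rho_{\Pi,\edge}(V_\Pi)=V_\Pi-p\circ\pi_{\edge}+p\circ\pi'_{\edge}=(2X-4)U_{\Pi}+(4-X)V_{\Pi}$.

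The one point requiring care is the bookkeeping in the Picard--Lefschetz sum over the equivalence classes $\Theta_{\edge}/\{\pm 1\}$: within each class there are two oriented edges satisfying $\ini(y)\in z_e$, $\tm(y)\in z_{\iota e}$ (namely $y$ and $-\iota y$), and this pairing exactly compensates the factor $\tfrac{1}{2}$ in the definition of $\theta'_{\edge}$. Once this is accounted for, the remainder of the argument is a direct computation parallel to the proof of Theorem~\ref{Thm:monoSig}.
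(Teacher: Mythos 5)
Your proposal is correct and follows essentially the same route as the paper: identify the classes fixed because they lie in the relevant isotropic sublattice of vanishing cycles, apply the Picard--Lefschetz formula to $U_\Pi(e)$ and $V_\Pi(e)$ using the intersection numbers of Proposition~\ref{intedtrPi}, and convert back to the $\Ocal_o$-basis via Equation~\eqref{EQ:PigenSL2O}. Your extra remark on the $\Theta_{\edge}/\{\pm 1\}$ bookkeeping versus the factor $\tfrac{1}{2}$ in $\theta'_{\edge}$ is a welcome clarification the paper leaves implicit.
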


It is clear that the dual intersection graph $G_{\Pi,\edge}$ of $\Pi_\edge$ is the same as $G_{\Sigma,\edge}$.Therefore we have the same results as the exact sequences \eqref{0HHH0}, \eqref{WHW} and \eqref{seqph1} with $\Sigma$ replaced by $\Pi$.
There is some difference at the other end: if $G_{\Pi,\trc} $ is the dual intersection graph of $\Pi_\trc$, $G_{\Pi,\trc} $ has only one vertex and six edges with the vertex marked with 4. In this case the kernel $L_{\Pi,\trc}$ of $H_1(\Pi)\to H_1(\Pi_\trc)$ in the exact sequence \eqref{kHH} is generated by 6 elements i,e the vanishing cycles which denote their collection by $\Delta_{\Pi,\trc}$.
Then it is a primitive isotropic sublattice $L_{\Pi,\trc}$ of rank six which is not Lagrangian. Hence the exact sequence \eqref{kHH} will become the following.
\begin{equation}\label{0HHH0Pitrc}
    \xymatrix{
    0\ar[r]& L_{\Pi,\trc}\ar[r]& H_1(\Pi)\ar[r]^-{\phi}& H_1(C_{-1})\ar[r]& 0.
    }
\end{equation}
However we have a similar description of the associated monodromy $\rho_{\Pi,\trc}$ in terms of $\Delta_{\Pi,\trc}$.
\begin{equation}\label{LHLvPi}
    \xymatrix{
    0\ar[r]& L_{\Pi,\trc}\ar[r]\ar[dr]_{j_{\Pi,\trc}}& {H_1(\Pi,\Zds)}\ar[r]^-{\phi}& H_1(C_{-1})\ar[r]& 0\\
    &&{Z_1(\Pi)/B_1(\Pi)}\ar[u]&&
    }
\end{equation}
Here $j_{\Pi,\trc}$ is the obvious map. Applying the left exact functor $\Hom_{\Zds\Isr}(E_o,\cdot)$ to the short exact sequence \eqref{LHLvPi} and combine it with the exact sequence \eqref{eqn:bacicexseqPi}
\begin{equation}\label{hom-1}
    \xymatrix@C=1pc@R=2pc{
    0\ar[r]& \Hom_{\Zds\Isr}(E_o,L_{\Pi,\trc})\ar[r]\ar[dr]_{j_{\Pi,\trc\ *}}&\Hom_{\Zds\Isr}(E_o,H_1(\Pi))\ar[r]^-{\phi_*}&\Hom_{\Zds\Isr}(E_o, H_1(C_{-1}))\ar[r]& \Ext_{\Zds\Isr}(E_o,L_{\Sigma,\trc})\\
    &&\coker(i)\ar[u]&&
    }
\end{equation}
The vertical arrow is an isomorphism same as above.

\begin{proof}
    \textbf{(Proof of Theorem \ref{Thm:monoPi})} The proof is similar to the proof of Theorem \ref{Thm:monoSig}. By the Proposition \ref{hypeModV} the image of $\pi_{\edge}$ and $\pi'_{\edge}$ lie in $L_{\Pi,\edge}$ and the image of $\pi_{\trc}$ and $\pi'_{\trc}$ lie in $L_{\Pi,\trc}$. Hence the monodromy $\rho_{\Pi,\edge}$ fixes $\pi_{\edge}$ and $\pi'_{\edge}$, while the monodromy $\rho_{\Pi,\trc}$ fixes  $U_{\Pi}=\pi_{\trc}$ and $\pi'_{\trc}$.

    Recall that we take $\Rsr_i(\tilde{\Pi})$ be systems of representatives of $\iota$-symmetry on $\Csr_i(\tilde{\Pi})$. By the Picard-Lefschetz formula
    \begin{equation*}
        \rho_{\Pi,\trc}(V_{\Pi}(e))- V_{\Pi}(e)                 =  \sum_{\theta\in\Delta_{\Pi,\trc}/\{\pm1\}}\la [V_{\Pi}],\theta\ra\theta          =  \frac{1}{2}\sum_{x\in\Rsr_0(\tilde{\Pi})}\la [\pi'_{\edge}(e)],\theta_x\ra\theta_x
    \end{equation*}
    Let $x\in \Rsr_0(\tilde{\Pi})\subset\Csr_0(\tilde{\Pi})$, from the Lemma \ref{dulintPi} $\la [\pi'_{\edge}(e)],\theta_x\ra$ equals $2$ if $x\neq e$, otherwise it is $0$. Hence the sum $\sum_{x\in\Rsr_0(\tilde{\Pi})}\la [\pi'_{\edge}(e)],\theta_x\ra\theta_x$ equals $2\theta'_{\trc}$. Therefore we have $\rho_{\Pi,\trc}(V_{\Pi}(e))=V_{\Pi}+\pi'_{\trc}=XU_{\Pi}+V_{\Pi}$ from Equation \eqref{EQ:PigenSL2O}.

    Similarly, for $\rho_{\Pi,\edge}$ we have
    \begin{equation*}
        \begin{aligned}
            \rho_{\Pi,\edge}(U_{\Pi}(e))- U_{\Pi}(e) & = & \sum_{\theta\in\Delta_{\Pi,\edge}/\{\pm1\}}\la [U_{\Pi}(e)],\theta\ra\theta & = & \sum_{y\in\Rsr_1(\tilde{\Pi})}\la [\pi_{\trc}(e)],\theta_y\ra\theta_y                             \\
            \rho_{\Pi,\edge}(V_{\Pi}(e))- V_{\Pi}(e) & = & \sum_{\theta\in\Delta_{\Pi,\edge}/\{\pm1\}}\la [V_{\Pi}(e)],\theta\ra\theta & = & \frac{1}{2}\sum_{y\in\Rsr_1(\tilde{\Pi})}\la [\pi_{\trc}(e)]-[\pi'_{\trc}(e)],\theta_y\ra\theta_y \\
        \end{aligned}
    \end{equation*}
    From the Lemma \ref{dulintPi}, $\la [\pi_{\trc}(e)],\theta_y\ra$ equals $-1$ if the initial point of $y$ is $e$ otherwise it equals 0. And $\la [\pi'_{\trc}(e)],\theta_y\ra$ equals $1$ if the initial point of $y$ is $e$, $2$ if the initial point of $y$ lies on $P_e$ while the terminal point of $y$ lies on $P_{\iota e}$ and $0$ in other cases. Hence we have $\sum_{y\in\Rsr_1(\tilde{\Pi})}\la [\pi_{\trc}(e)],\theta_y\ra\theta_y$ equals to $-\theta_{\edge}$ and $\sum_{y\in\Rsr_1(\tilde{\Pi})}\la [\pi'_{\trc}(e)],\theta_y\ra\theta_y$ equals to $\theta_{\edge}-2\theta'_{\edge}$. Therefore we have the monodromies $\rho_{\Pi,\edge}(U_{\Pi})=U_{\Pi}-\pi_{\edge}=(X-2)U_{\Pi}-(X-1)V_{\Pi}$ and $\rho_{\Pi,\edge}(V_{\Pi})=V_{\Pi}-\pi_{\edge}+\pi'_{\edge}=(2X-4)U_{\Pi}+(4-X)V_{\Pi}$. This finishes the proof.
\end{proof}

\subsection{The Local Monodromies Near the Triple Conic}
We claim that the monodromy around $s = 0$ is of order three. Remember that $C_0$ is the unstable curve $3K$, where $K$ is an $\Isr$-invariant (smooth) conic.
Let ${U_0} \subset\Bsr$ be an open disk centered at $s = 0$ of radius $< \frac{27}{5}$. We proved in \cite{zi2021geometry} that by doing a base change over ${U_0}$ of order 3 (with Galois group $\mu_3$), given by $\hat{t} \in \hat{U}_0 \to t = \hat{t}^3\in {U_0}$, the pull back of $\Wsr_{U_0} /{U_0}$ can be modified over the central fiber $C_0$ only to make it a smooth family $\hat{\Wsr}_{\hat{U}_0}/\hat{U}_0$ which still retains the $\mu_3$-action. The central fiber is then a smooth curve $\hat{C}_0$ with an action of $\Isr \times \mu_3$ whose $\mu_3$-orbit space gives $K$. This implies that the monodromy of the original family around $0$ (which is a priori only given as an isotopy class of diffeomorphisms of a nearby smooth fiber) can be represented by the action of a generator $\phi\in \mu_3$ on $\hat{C}_0$ (which indeed commutes with the $\Isr$-action on $C_0$).

\begin{cor}\label{cor:rhocyc3}
    Let $t\in {U_0}\backslash\{0\}\subset\Bsr$, the monodromy automorphism $\rho_0$ acts on $\Hom_{\Zds\Isr}(E_o,H_1(C_t))$ with order three.
\end{cor}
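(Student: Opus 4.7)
The plan is to use the geometric description recalled immediately before the statement: after the degree~$3$ base change, the monodromy is realized by the action of a generator $\phi\in\mu_3$ on the smooth curve $\hat C_0$, whose $\mu_3$-quotient is the conic $K\cong\Pds^1$. So $\rho_0^3=\Id$ is automatic, and the only issue is to rule out $\rho_0=\Id$ on the four-dimensional space $\Hom_{\Zds\Isr}(E_o, H_1(C_t))$. I will do this by showing that $\phi^{*}$ has \emph{no} nonzero fixed vector on $H_1(\hat C_0;\Cds)$ at all, from which the conclusion is immediate since $\Hom_{\Zds\Isr}(E_o, H_1(\hat C_0))$ is a nonzero $\phi^{*}$-stable subspace.

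First I would compute the fixed locus of $\phi$ on $\hat C_0$. The quotient map $\hat C_0\to \hat C_0/\mu_3=K$ is a cyclic cover of degree~$3$ between smooth curves of genus $10$ and $0$, so every ramification point is totally ramified ($e_p=3$) and Riemann--Hurwitz
\begin{equation*}
    2\cdot 10-2=3(2\cdot 0-2)+\sum_{p}(e_p-1)
\end{equation*}
yields $\#\mathrm{Fix}(\phi)=12$. These fixed points are isolated because $\phi$ is a nontrivial holomorphic involution-free automorphism of a connected curve.

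Next I would apply the topological Lefschetz fixed point formula. Since $\phi$ is orientation preserving and its fixed points are isolated,
\begin{equation*}
    12=L(\phi)=\mathrm{tr}(\phi^{*}|H^0)-\mathrm{tr}(\phi^{*}|H^1)+\mathrm{tr}(\phi^{*}|H^2)=2-\mathrm{tr}(\phi^{*}|H^1(\hat C_0;\Cds)),
\end{equation*}
so $\mathrm{tr}(\phi^{*}|H^1(\hat C_0;\Cds))=-10$. Because $\phi^3=1$, the eigenvalues of $\phi^{*}$ on the $20$-dimensional space $H^1(\hat C_0;\Cds)$ are cube roots of unity with multiplicities $n_1,n_\omega,n_{\omega^2}$. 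Complex conjugation forces $n_\omega=n_{\omega^2}$, and combining $n_1+2n_\omega=20$ with $n_1-n_\omega=-10$ gives $n_1=0$ and $n_\omega=n_{\omega^2}=10$.

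Thus $\phi^{*}$ acts on $H_1(\hat C_0;\Cds)$ without any fixed vector. The monodromy preserves the isotypical decomposition \eqref{candecom}, so it restricts to an action on $\Hom_{\Zds\Isr}(E_o,H_1(\hat C_0))$. If $\rho_0$ were the identity on this (nonzero, rank $4$) module, then the image of any nonzero $\psi\in \Hom_{\Zds\Isr}(E_o,H_1(\hat C_0))$ would supply a nonzero $\phi^{*}$-fixed vector in $H_1(\hat C_0;\Cds)$, contradicting $n_1=0$. Hence $\rho_0\neq \Id$, and since $\rho_0^3=\Id$, it has order exactly three. The only place I expect any subtlety is verifying that $\phi$ truly has isolated fixed points (so that the simple form of Lefschetz applies) and that the cyclic cover has the stated ramification profile; both follow from $\mu_3$ being of prime order and acting nontrivially with smooth quotient $K$.
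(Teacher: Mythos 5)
Your argument is correct, but it takes a genuinely different route from the paper. The paper disposes of this corollary with a one-line citation (to Cor.~4.8 of \cite{looijenga2021monodromy}), and then in Section~5 it independently re-derives the order-three statement by pure matrix algebra: since $\Bsr^\circ$ is $\Pds^1$ minus four points, the loops around the punctures satisfy $\rho_0^{-1}=\rho_{\Sigma,\trc}\,\rho_{\Sigma,\edge}\,P^{-1}\rho_{\Pi,\trc}P$, and multiplying the already-computed local monodromies in the basis $(U,V)$ gives $U\mapsto -V$, $V\mapsto U-V$, visibly of order three. You instead work locally and geometrically: Riemann--Hurwitz for the degree-$3$ cyclic cover $\hat C_0\to K$ gives $12$ fixed points, Lefschetz gives $\operatorname{tr}(\phi^{*}|H^1)=-10$, and the eigenvalue count forces $n_1=0$, so $\phi^{*}$ has no invariants and $\rho_0\neq\Id$ on the nonzero module $\Hom_{\Zds\Isr}(E_o,H_1(C_t))$. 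Your computation is sound (the total ramification follows from $\mu_3$ having prime order, the local Lefschetz indices are all $+1$ since $\det_{\Rds}(I-\zeta)=|1-\zeta|^2>0$, and $n_\omega=n_{\omega^2}$ because $\phi^{*}$ preserves the real structure), and it even proves the stronger fact that $\rho_0$ has order three on all of $H_1(C_t;\Qds)$, hence in particular on the $V$-isotypic part as well. What it does not give, and what the paper's Section~5 computation does, is the explicit matrix of $\rho_0$ in $\SL_2(\Ocal_o)$, which is what is actually needed later for the generation and index arguments. One possible streamlining of your argument: instead of Riemann--Hurwitz plus Lefschetz, note directly that $H^1(\hat C_0;\Qds)^{\mu_3}\cong H^1(\hat C_0/\mu_3;\Qds)=H^1(K;\Qds)=0$ since $K\cong\Pds^1$, which yields $n_1=0$ immediately.
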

\begin{proof}
    This comes from the Corollary 4.8 of \cite{looijenga2021monodromy}.
\end{proof}

%%%%%%%%%%%%%%%%%%%%%%%%%%%%%%%%%%%%%%%%%%%%%%%%%%%%%%%%%%%%%%%%%%
%%%%%%%%%%%%%%%%%%%%%%%%%%%%%%%%%%%%%%%%%%%%%%%%%%%%%%%%%%%%%%%%%%
%%%%%%%%%%%%%%%%%%%%%%%%%%%%%%%%%%%%%%%%%%%%%%%%%%%%%%%%%%%%%%%%%%
%%%%%%%%%%%%%%%%%%%%%%%%%%%%%%%%%%%%%%%%%%%%%%%%%%%%%%%%%%%%%%%%%%
%%%%%%%%%%%%%%%%%%%%%%%%%%%%%%%%%%%%%%%%%%%%%%%%%%%%%%%%%%%%%%%%%%
%%%%%%%%%%%%%%%%%%%%%%%%%%%%%%%%%%%%%%%%%%%%%%%%%%%%%%%%%%%%%%%%%%
%%%%%%%%%%%%%%%%%%%%%%%%%%%%%%%%%%%%%%%%%%%%%%%%%%%%%%%%%%%%%%%%%%
%%%%%%%%%%%%%%%%%%%%%%%%%%%%%%%%%%%%%%%%%%%%%%%%%%%%%%%%%%%%%%%%%%

\section{Global Monodromy and Period Map on the $E$-part}

We will determine the global monodromy group and the period map in this section. Let us take $\psi$ and $\psi'$ be the two naturally defined embeddings of $\Kcal\hookrightarrow \Rds$. It is clear that $\psi$ and $\psi'$ induce two different embeddings $\SL_2(\Ocal_o)\hookrightarrow \SL_2(\Rds)$ which we still denote they by $\psi$ and $\psi'$. Hence the map $(\psi,\psi')$ will embed the group $\SL_2(\Ocal_o)$ into $\SL_2(\Rds)$ with the diagonal isomorphic to $\SL_2(\Zds)$. This could also be described as follows: there exist a Galois involution $\varphi$ of $\SL_2(\Ocal_o)$ which will exchange the image of the two embeddings in $\SL_2(\Rds)\times \SL_2(\Rds)$, the fixed points are the group $\SL_2(\Zds)$.
Moreover we could observe that $\SL_2(\Ocal_o)$ acts faithfully and discontinuously on $\Hds^2$ through this embedding. The quotient $\SL_2(\Ocal_o)/\Hds^2$ is then a algebraic surface called \textit{Hilbert's modular surface}.
We will need the Theorem 4.6 in \cite{farb2021Arithmeticity} listed below which is also a special case of the main theorem of \cite{benoist2010discreteness}.
\begin{thm}\label{Thm:cerfi}
    Let $K$ be a real quadratic number field, $\Ocal_K$ its ring of integers and $\Omega<K$ a lattice. Let $\Lambda< \SL_2(\Ocal_K)$ be the subgroup generated by matrix of the form $\begin{pmatrix}
            a & b \\
            c & d
        \end{pmatrix}$ with $c\neq 0$, together with the set of matrices
    \begin{equation*}
        \{\begin{pmatrix}
            1,\omega \\
            0,1
        \end{pmatrix}:\omega\in\Omega\}
    \end{equation*}
    If $\psi,\ \psi':K\to\Rds$ are the two real embeddings of $K$, then the associated embedding $\SL_2(\Ocal_K)\to \SL_2(\Rds)\times\SL_2(\Rds)$ maps $\Lambda$ onto a lattice in $\SL_2(\Rds)\times\SL_2(\Rds)$. In particular $\Lambda$ has finite index in $\SL_2(\Ocal_K)$.
\end{thm}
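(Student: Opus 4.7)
This is cited from Theorem 4.6 of \cite{farb2021Arithmeticity} and is a special case of the main theorem of \cite{benoist2010discreteness}; in the present paper it would simply be invoked. I sketch how I would reconstruct the argument.

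The overall plan is to view $\Lambda$ as a subgroup of the semisimple Lie group $G := \SL_2(\Rds)\times\SL_2(\Rds)$ through the product embedding $(\psi,\psi')$, and then to apply a Benoist--Miquel-type arithmeticity criterion: any subgroup of $G$ that contains a cocompact lattice in a maximal horospherical subgroup, together with some element that does not normalize it, is itself a lattice in $G$. Once $\Lambda$ is known to be a lattice in $G$, and since $\SL_2(\Ocal_K)$ is also a lattice in $G$ containing $\Lambda$, the index $[\SL_2(\Ocal_K):\Lambda]$ is automatically finite.

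The two hypotheses would be checked as follows. First, the subgroup of $\Lambda$ generated by the given upper unipotents is sent by $(\psi,\psi')$ to the image of $\Omega$ inside the upper unipotent subgroup of $G$, which is naturally $\Rds\times\Rds$. Since $\Omega$ is a $\Zds$-lattice of rank two in $K$ and $(\psi,\psi')$ identifies $K\otimes_\Qds\Rds$ with $\Rds^2$, this image is a cocompact lattice in the two-dimensional upper unipotent subgroup of $G$, which is maximal horospherical. Second, any generator of $\Lambda$ of the form $\bigl(\begin{smallmatrix}a & b\\ c & d\end{smallmatrix}\bigr)$ with $c\neq 0$ is not upper triangular under either real embedding and therefore does not normalize the upper unipotent; conjugating the unipotent lattice by such an element produces elements with nontrivial lower-left entries, which as a byproduct also yields Zariski density in $G$.

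The main obstacle is the Benoist--Miquel theorem itself, which is genuinely deep and rests on a careful study of discrete subgroups generated by unipotents together with non-normalizing elements in semisimple Lie groups. A more hands-on alternative would be to show directly that $\Lambda$ contains a principal congruence subgroup of $\SL_2(\Ocal_K)$: one would produce arbitrarily fine upper unipotents from $\Omega$, conjugate them by an element with $c\neq 0$ to get lower-triangular unipotents, exploit the infinite unit group of $\Ocal_K$ (via Dirichlet) to rescale and generate further unipotents, and then invoke a Vaserstein/Bass--Milnor--Serre-type generation result for $\SL_2$ over a totally real ring of integers. That route is elementary in principle but combinatorially heavier than simply citing Benoist--Miquel.
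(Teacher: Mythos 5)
Your proposal matches the paper exactly: the paper gives no proof of this statement, simply importing it as Theorem 4.6 of the cited arithmeticity paper and noting it is a special case of the Benoist--Miquel theorem, which is precisely what you say would be done. Your reconstruction sketch (cocompact lattice in the maximal horospherical subgroup via $\Omega\otimes\Rds\cong\Rds^2$, plus a non-normalizing element with $c\neq 0$, then finiteness of index from both groups being lattices in $\SL_2(\Rds)\times\SL_2(\Rds)$) is a faithful account of how the cited sources establish it, so there is nothing to fault.
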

Since the two models $\Sigma$ and $\Pi$ gives the same singular fiber at the "$\edge$" ends, it is clear that $\rho_{\Sigma,\edge}$ and $\rho_{\Pi,\edge}$ should conjugate to each other by a transformation in $\Sp_1(\Ocal_o)\cong \SL_2(\Ocal_o)$. It is clear to check that if we take the linear transformation $P$ as Equations \eqref{Eq:P}, we will have $\rho_{\Pi,\edge}=P^{-1}\rho_{\Sigma,\edge}P$.
\begin{equation}\label{Eq:P}
    \begin{aligned}
        PU_{\Pi} & = & -V_{\Sigma}           \\
        PV_{\Pi} & = & U_{\Sigma}-V_{\Sigma}
    \end{aligned}
\end{equation}
From this observation, we will take the basis $(U:=U_{\Sigma},V:=V_{\Sigma})$ as a basis for $\Hom_{\Zds\Isr}(E_o,H_1(C_t))$.
First we give a direct computation to the Corollary \ref{cor:rhocyc3} that we proved in the last section.
\begin{cor}
    The monodromy $\rho_0$ action on $\Hom_{\Zds\Isr}(E_o,H_1(C_t))$ is cyclic of order three. More explicitly, it brings $U$ to $-U+V$ and $V$ to $-U$.
\end{cor}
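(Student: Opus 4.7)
The plan is to exploit the fact that $\Bsr \cong \Pds^1$: the fundamental group $\pi$ of $\Bsr^{\circ}$ relative to a base point is generated by small loops $\alpha_{-1}, \alpha_0, \alpha_{27/5}, \alpha_\infty$ around the four punctures, subject to the single relation that their product in an appropriate cyclic order is trivial. Under the monodromy representation $\rho_{E_o}$, this yields an identity of the form $\rho_0 \rho_{27/5} \rho_\infty \rho_{-1} = \Id$ in $\SL_2(\Ocal_o)$. Since three of the four local monodromies have already been computed in Theorems \ref{Thm:monoSig} and \ref{Thm:monoPi}, we can solve for $\rho_0$ by inverting the product of the other three.

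First I would gather the three known local monodromies in the common basis $(U,V) = (U_\Sigma, V_\Sigma)$. Theorem \ref{Thm:monoSig} directly provides the matrices of $\rho_{27/5} = \rho_{\Sigma,\trc}$ and $\rho_\infty = \rho_{\Sigma,\edge}$ in this basis. For $\rho_{-1}$, which is naturally expressed as $\rho_{\Pi,\trc}$ in the $(U_\Pi, V_\Pi)$ basis by Theorem \ref{Thm:monoPi}, I would transport it to the $(U_\Sigma, V_\Sigma)$ basis by conjugating with the change-of-basis matrix $P$ recorded in \eqref{Eq:P}; this step is justified because $P$ implements the transport between the two base points $\gamma_\Sigma|(0,1)$ and $\gamma_\Pi|(0,1)$, consistent with the already-observed relation $\rho_{\Pi,\edge} = P^{-1} \rho_{\Sigma,\edge} P$.

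The remaining work is a short matrix multiplication in $\SL_2(\Ocal_o)$, using only the defining relation $X^2 = 5$. Computing $\rho_{27/5}\,\rho_\infty\,\rho_{-1}$ and inverting it should yield
\begin{equation*}
\rho_0 \;=\; \begin{pmatrix} -1 & -1 \\ 1 & 0 \end{pmatrix},
\end{equation*}
which is precisely the map $U \mapsto -U+V$, $V \mapsto -U$ claimed in the statement; cubing this matrix then verifies $\rho_0^3 = \Id$, matching Corollary \ref{cor:rhocyc3}.

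The principal subtlety, and the main obstacle, is fixing the cyclic ordering and the orientations of the loops $\alpha_s$ consistently so that the product relation really is $\rho_0\rho_{27/5}\rho_\infty\rho_{-1} = \Id$, rather than the same expression with a different ordering or with some factor replaced by its inverse. I would pin this down either by a careful path analysis of how $\gamma_\Sigma$, $\gamma_\Pi$ and the encircling loops sit inside $\Bsr \cong \Pds^1$, or pragmatically by using Corollary \ref{cor:rhocyc3} as an orientation check: of the two candidate answers $\rho_0$ and $\rho_0^{-1} = \rho_0^2$, only one admits the interpretation as the action of a generator of $\mu_3$ on the smoothed central fiber $\hat{C}_0$ constructed in the proof of Corollary \ref{cor:rhocyc3}, so that cross-check uniquely fixes the sign and completes the argument.
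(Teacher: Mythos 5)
Your proposal is essentially the paper's own proof: the paper uses exactly the relation $\rho_0^{-1}=\rho_{\Sigma,\trc}\,\rho_{\Sigma,\edge}\,P^{-1}\rho_{\Pi,\trc}P$ coming from $\Bsr^{\circ}$ being $\Pds^1$ minus four points, with $\rho_{\Pi,\trc}$ transported by the matrix $P$ of \eqref{Eq:P}, and then solves for $\rho_0$ by a matrix computation, obtaining the stated action and order three. One small caveat: your proposed tie-breaker between $\rho_0$ and $\rho_0^{-1}$ via Corollary \ref{cor:rhocyc3} does not actually discriminate, since both are generators of the same $\mu_3$; the sign/ordering convention must instead be fixed by the path analysis you mention first (the paper itself simply asserts the ordering).
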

\begin{proof}
    Since the smooth locus $\Bsr^{\circ}$ is obtained by removing four points from $\Pds^1$ we will have the following equation
    \begin{equation*}
        \rho_0^{-1}=\rho_{\Sigma,\trc}\rho_{\Sigma,\edge}P^{-1}\rho_{\Pi,\trc}P
    \end{equation*}
    Then the computation shows that $\rho_0^{-1}$ is given as following:
    \begin{equation*}
        \begin{aligned}
            \rho_0^{-1}(U) & = & -V  \\
            \rho_0^{-1}(V) & = & U-V
        \end{aligned}
    \end{equation*}
    Hence it is cyclic of order three.
\end{proof}
\begin{thm}
    The monodromy group $\Gamma_{E_o}$ is a subgroup of finite index in $\SL_{2}(\Ocal_o)$. In particular it is arithmetic.
\end{thm}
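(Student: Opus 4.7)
The plan is to invoke Theorem~\ref{Thm:cerfi} with $K=\Qds(\sqrt{5})$ and $\Ocal_K=\Zds[\tfrac{1+\sqrt{5}}{2}]$; since $\Ocal_o=\Zds[\sqrt{5}]$ has index $2$ in $\Ocal_K$, the subgroup $\SL_2(\Ocal_o)$ has finite index in $\SL_2(\Ocal_K)$, so it is enough to show that $\Gamma_{E_o}$ has finite index in $\SL_2(\Ocal_K)$. The theorem asks for two ingredients: (a) at least one element of $\Gamma_{E_o}$ whose lower-left entry is nonzero, and (b) a full $\Zds$-lattice $\Omega\subset K$ such that $u(\omega):=\begin{pmatrix}1&\omega\\0&1\end{pmatrix}\in\Gamma_{E_o}$ for every $\omega\in\Omega$.

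For (a), Theorem~\ref{Thm:monoSig} gives $\rho_{\Sigma,\edge}$ the matrix $\begin{pmatrix}3&X-1\\-(X+1)&-1\end{pmatrix}$ in the basis $(U,V)$, and the lower-left entry $-(X+1)$ is plainly nonzero.

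For (b), Theorem~\ref{Thm:monoSig} already exhibits the first translation $\rho_{\Sigma,\trc}=u(X-2)$. A second, $\Zds$-independent, translation will come from conjugating the $\Sigma$-frame version of $\rho_{\Pi,\trc}$ by the order-three monodromy $\rho_0$ (both of which lie in $\Gamma_{E_o}$, the latter because the product of the four local monodromies is trivial in $\pi_1(\Bsr^{\circ})$). Using the change of basis (5.1), the element $B:=P\rho_{\Pi,\trc}P^{-1}$ takes the lower-unipotent form $\begin{pmatrix}1&0\\-X&1\end{pmatrix}$, and a direct matrix calculation — using only $X^2=5$ — yields
\[
\rho_0\cdot B\cdot\rho_0^{-1}=u(X).
\]
Multiplying by $u(X-2)^{-1}=u(2-X)$ produces $u(2)\in\Gamma_{E_o}$. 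Since $\{2,\sqrt{5}\}$ is $\Qds$-linearly independent, $\Omega:=2\Zds+\sqrt{5}\Zds\subset K$ is a full $\Zds$-lattice (of index $2$ in $\Ocal_o$), and $u(\omega)\in\Gamma_{E_o}$ for every $\omega\in\Omega$.

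Combining (a) and (b), Theorem~\ref{Thm:cerfi} shows that $(\psi,\psi')(\Gamma_{E_o})$ is a lattice in $\SL_2(\Rds)\times\SL_2(\Rds)$, which is exactly the assertion that $\Gamma_{E_o}$ has finite index in $\SL_2(\Ocal_K)$, hence also in $\SL_2(\Ocal_o)$, proving the theorem and the arithmeticity. The delicate step in the plan is the identity $\rho_0 B\rho_0^{-1}=u(X)$: because $\rho_0$ has order three, it cyclically permutes the three Borel subgroups stabilising the lines $\la U\ra$, $\la V\ra$ and $\la V-U\ra$, and $B$ (which stabilises $\la V\ra$) is therefore rotated into the Borel fixing $\la U\ra$ — the upper-triangular one. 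Verifying this rotation by explicit matrix multiplication, and confirming that the resulting translation $X$ is genuinely $\Zds$-independent of $X-2$, is the only computationally sensitive part of the argument.
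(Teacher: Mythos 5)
Your proposal is correct and follows the same route as the paper: both reduce the statement to Theorem~\ref{Thm:cerfi} applied to the explicit matrices of the local monodromies in the basis $(U,V)=(U_\Sigma,V_\Sigma)$. The difference is one of completeness, and it is in your favour. The paper lists the three generators $\rho_{\Sigma,\trc}=\left(\begin{smallmatrix}1&X-2\\0&1\end{smallmatrix}\right)$, $\rho_{\Sigma,\edge}=\left(\begin{smallmatrix}3&X-1\\-(X+1)&-1\end{smallmatrix}\right)$ and the lower-unipotent $\left(\begin{smallmatrix}1&0\\-X&1\end{smallmatrix}\right)$, and then simply asserts that the hypotheses of Theorem~\ref{Thm:cerfi} hold; but that theorem requires the translations $u(\omega)$ for $\omega$ ranging over a \emph{full} rank-two lattice $\Omega<K$, and the single upper-unipotent generator $u(X-2)$ only supplies the rank-one group $u(\Zds(X-2))$. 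Your conjugation identity is the missing step, and it checks out: with $\rho_0=\left(\begin{smallmatrix}-1&-1\\1&0\end{smallmatrix}\right)$, $\rho_0^{-1}=\left(\begin{smallmatrix}0&1\\-1&-1\end{smallmatrix}\right)$ and $B=\left(\begin{smallmatrix}1&0\\-X&1\end{smallmatrix}\right)$ one gets $\rho_0 B\rho_0^{-1}=\left(\begin{smallmatrix}1&X\\0&1\end{smallmatrix}\right)$, whence $u(2)=u(X)u(2-X)\in\Gamma_{E_o}$ and $u(\omega)\in\Gamma_{E_o}$ for all $\omega$ in the full lattice $2\Zds+X\Zds$; together with the nonzero lower-left entry of $\rho_{\Sigma,\edge}$ this puts you squarely within the hypotheses of Theorem~\ref{Thm:cerfi}. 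Your appeal to $\rho_0\in\Gamma_{E_o}$ is legitimate since the four local monodromies multiply to the identity. The only cosmetic point is the direction of the base change: the paper writes $P^{-1}\rho_{\Pi,\trc}P$ where you write $P\rho_{\Pi,\trc}P^{-1}$; given the paper's own relation $\rho_{\Pi,\edge}=P^{-1}\rho_{\Sigma,\edge}P$, your convention is the consistent one, and in either case the matrix $\left(\begin{smallmatrix}1&0\\-X&1\end{smallmatrix}\right)$ is the element of $\Gamma_{E_o}$ being used, so nothing is affected.
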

\begin{proof}
    In order to show this theorem we only need to check the conditions in Theorem \ref{Thm:cerfi}. From the above observation that $\Gamma_{E_o}$ is generated by the following three generators
    \begin{equation}
        \rho_{\Sigma,\trc}=\begin{pmatrix}
            1 & X-2 \\
            0 & 1
        \end{pmatrix},\ \rho_{\Sigma,\edge}=\begin{pmatrix}3&X-1\\-(1+X)&-1\end{pmatrix},\ P^{-1}\rho_{\Pi,\trc}P=\begin{pmatrix}1&0\\-X&1\end{pmatrix}
    \end{equation}
    It is clear that $\Gamma_E$ is generated by matrix of the form
    $\begin{pmatrix}
            a & b \\
            c & d
        \end{pmatrix}$
    with $c\neq 0$ and upper triangular matrices. Hence $\Gamma_{E_o}$ is a finite index subgroup of $\SL_2(\Ocal_o)$.
\end{proof}

Finally summarize all the facts about the monodromy, we could determine the 'partial' period map. Let $\Bsr^{+}$ be the open subvariety of $\Bsr$ obtained by removing from $\Bsr$ the three points representing nodal curves.
\begin{thm}
    The 'partial' period map $p_{E_o}:\Bsr^{+}\to\Gamma_{E_o}/\Hds^2 \to\SL_2(\Ocal_o)/\Hds^2$ has the property that the first arrow is open and the second map is finite.
\end{thm}

\section{Computation for the Index of $\Gamma_{E_o}$ in $\SL_2(\Ocal_o)$}

Recall that $\Ocal_o=\Zds[X]/(X^2-5)$ is the endomorphisms ring of $\Zds\Acal_5$-module $E_o$ and $\Ocal=\Zds[Y]/(Y^2-Y-1)$ is isomorphic to the ring of integers in the algebraic field $\Qds[\sqrt{5}]$. Their relations are as following:
the natural map given by quotient $2\Ocal$ gives the exact sequence
\begin{equation*}
    \Ocal\to\Ocal/2\Ocal\to 1
\end{equation*}
It has the properties that the last term $\Ocal/2\Ocal$ is isomorphic to $\Fds_4$ and $\Ocal_o$ is the pullback of $\Fds_2\subset\Fds_4$.
The similar properties also holds if we consider special linear groups with entries in $\Ocal$ and $\Ocal_o$. We have the following exact sequence of the groups
\begin{equation}\label{SLO}
    \SL_2(\Ocal)\to\SL_2(\Ocal/2\Ocal)\cong \SL_2(\Fds_4)\to 1
\end{equation}
The subgroup $\SL_2(\Ocal_o)$ is the pullback of $\SL_2(\Fds_2)\subset \SL_2(\Fds_4)$.
From these facts, we have the following proposition
\begin{prop}\label{indexSLOoSLO}
    The Exact Sequence \ref{SLO} induced an one-to-one correspondence of sets of left cosets
    \begin{equation*}
        \SL_2(\Ocal)/\SL_2(\Ocal_o)\to \SL_2(\Fds_4)/\SL_2(\Fds_2)
    \end{equation*}
    In particular $\SL_2(\Ocal_o)$ has index 10 in $\SL_2(\Ocal)$.
\end{prop}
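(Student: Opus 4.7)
The plan is to treat this as a two-step application of the correspondence theorem for a surjective ring homomorphism, combined with an order count in the finite target.

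First I would verify the setup: the ring $\Ocal = \Zds[\phi]$ with $\phi = (1+\sqrt 5)/2$ has $2$ inert (since $Y^2 - Y - 1$ is irreducible over $\Fds_2$), so $\Ocal/2\Ocal \cong \Fds_4$. Using the embedding $X = 2Y - 1$ from the paragraph after Proposition \ref{ringOo}, one sees that $\Ocal_o = \Zds + 2\Ocal \cdot \phi + \cdots$ is exactly the preimage under reduction mod $2$ of the prime field $\Fds_2 \subset \Fds_4$, so we get the commuting square of $2 \times 2$ matrix rings, and $\SL_2(\Ocal_o)$ is the full preimage of $\SL_2(\Fds_2)$ in $\SL_2(\Ocal)$.

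Next, and this is the heart of the proof, I would establish that the reduction map
\begin{equation*}
\pi: \SL_2(\Ocal) \longrightarrow \SL_2(\Ocal/2\Ocal) = \SL_2(\Fds_4)
\end{equation*}
is surjective. Since $\Ocal$ is a Euclidean domain (with respect to the norm from $\Qds[\sqrt 5]$), $\SL_2(\Ocal)$ is generated by the elementary matrices $\bigl(\begin{smallmatrix}1 & a\\ 0 & 1\end{smallmatrix}\bigr)$ and $\bigl(\begin{smallmatrix}1 & 0\\ a & 1\end{smallmatrix}\bigr)$ with $a \in \Ocal$. Likewise $\SL_2(\Fds_4)$ is generated by the corresponding elementary matrices with entries in $\Fds_4$, and each such generator obviously lifts under $\pi$. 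This yields surjectivity of $\pi$.

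Once surjectivity is in hand, the correspondence theorem gives a bijection of left coset spaces
\begin{equation*}
\SL_2(\Ocal)/\SL_2(\Ocal_o) \;\longrightarrow\; \SL_2(\Fds_4)/\SL_2(\Fds_2),
\end{equation*}
because $\SL_2(\Ocal_o) = \pi^{-1}(\SL_2(\Fds_2))$ contains $\ker \pi$. To finish I would compute the orders via the standard formula $|\SL_2(\Fds_q)| = q(q-1)(q+1)$, giving $|\SL_2(\Fds_4)| = 60$ and $|\SL_2(\Fds_2)| = 6$, so the index is $10$.

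The main obstacle I expect is confirming the surjectivity of $\pi$; once one grants that $\Ocal$ is Euclidean (or invokes the known strong approximation for $\SL_2$ over rings of integers), everything else is bookkeeping. A subtler but easy check is the algebraic identification $\Ocal_o = \pi^{-1}(\Fds_2)$: spelling out that a typical element $a + b\phi \in \Ocal$ lies in $\Zds[2\phi-1] = \Zds[\sqrt 5]$ if and only if $b$ is even, equivalently if and only if $a + b\phi \equiv a \pmod{2\Ocal}$ lands in $\Fds_2 \subset \Fds_4$.
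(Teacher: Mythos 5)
Your proposal is correct and follows essentially the same route as the paper: both use the reduction map mod $2\Ocal$, identify $\SL_2(\Ocal_o)$ as the preimage of $\SL_2(\Fds_2)$, deduce the coset bijection, and count orders in the finite groups (the paper via $\SL_2(\Fds_4)\cong\Acal_5$ and $\SL_2(\Fds_2)\cong\Scal_3$, you via $|\SL_2(\Fds_q)|=q(q-1)(q+1)$, both giving $60/6=10$). The paper simply declares the bijectivity ``clear to check,'' whereas you supply the two ingredients it leaves implicit, namely surjectivity of the reduction map (via elementary matrices over the Euclidean ring $\Ocal$) and the identification $\Ocal_o=\Zds+2\Ocal$; this is a welcome filling-in of detail rather than a different argument.
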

\begin{proof}
    It is clear to check that this map is well-defined, surjective and injective. The index comes from the facts that $\SL_2(\Fds_4)$ is isomorphic to $\Acal_5$ and $\SL_2(\Fds_2)$ is isomorphic to $\Scal_3$.
\end{proof}
Besides we have an explicit description for the generating set of $\SL_2(\Ocal)$ which we will consider the following matrices in $\SL_2(\Ocal)$:
\begin{equation*}
    \begin{aligned}
         & A_0  =  -\Id,\                         &                                      \\
         & A_1  =  \begin{pmatrix}
            0  & 1 \\
            -1 & 0
        \end{pmatrix},\  & A_2  =   \begin{pmatrix}
            1 & 1 \\
            0 & 1
        \end{pmatrix} \\
         & A_3  =  \begin{pmatrix}
            X & 0   \\
            0 & X-1
        \end{pmatrix},\  & A_4  =   \begin{pmatrix}
            1 & X \\
            0 & 1
        \end{pmatrix} \\
    \end{aligned}
\end{equation*}
The following Proposition is the Corollary 2.3 in \cite{stover2021geometry} which showed that $\SL_2(\Ocal)$ is generated by all the $A_i$s together with $-\Id$ subject to some relations.
\begin{prop}\label{genSLO}
    The group $\SL_2(\Ocal)$ is generated by $A_0,\cdots, A_4$ subject to the following relations
    \begin{table}[h]
        \centering
        \begin{tabular}{lll}
            % after \\: \hline or \cline{col1-col2} \cline{col3-col4} ...
            \noalign{\smallskip}
            $C_0  =A_0^2$,                                        & \  &                                      \\
            \noalign{\smallskip}
            $C_1  =[A_0,A_1]$,                                    & \  & $C_2=[A_0,A_2]$,                     \\
            \noalign{\smallskip}
            $C_3  =[A_0,A_3]$,                                    & \  & $C_4=[A_0,A_4]$,                     \\
            \noalign{\smallskip}
            $R_1  =A_0A_1^2$,                                     & \  & $R_2=(A_1A_2)^3$,                    \\
            \noalign{\smallskip}
            $R_3  =A_0(A_1A_3)^2$,                                & \  & $R_4=[A_2,A_4]$,                     \\
            \noalign{\smallskip}
            $R_5  =A_3A_2A_3^{-1}(A_2A_4)^{-1}$,                  & \  & $R_6=A_3A_4A_3^{-1}(A_2A_4^2)^{-1}$, \\
            \noalign{\smallskip}
            $R_7  =A_0A_1A_4A_1(A_2A_4^{-1}A_1A_4^{-1}A_3)^{-1}$, & \  & \
        \end{tabular}
    \end{table}

\end{prop}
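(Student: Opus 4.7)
The plan is to prove this in two stages: first, that $A_0,\ldots,A_4$ generate $\SL_2(\Ocal)$, and then that the listed relators form a complete set of defining relations. A preliminary remark is in order: for $\det A_3 = X(X-1) = 1$ to hold in $\Ocal$, the symbol $X$ appearing in the matrices must denote a fundamental unit of $\Ocal$, for instance the golden ratio $Y$ which satisfies $Y(Y-1)=1$, rather than the element $X\in\Ocal_o$ used in Section \ref{seclatt}. With this convention, $A_2$ and $A_4$ are elementary transvections with off-diagonal entries $1$ and $X$, whose $\Zds$-span is all of $\Ocal$.

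For generation, the key fact is that $\Ocal=\Zds[Y]/(Y^2-Y-1)$ is norm-Euclidean. A direct Euclidean-algorithm argument reduces any matrix $M\in \SL_2(\Ocal)$ to a diagonal matrix by left and right multiplications by transvections $E_{12}(r)$ and $E_{21}(r)$ with $r\in\Ocal$. The Weyl-element identity $E_{21}(r) = A_1 E_{12}(-r) A_1^{-1}$, together with the fact that $1$ and $X$ additively generate $\Ocal$, shows that the group $\la A_1,A_2,A_4\ra$ already contains every such transvection. The leftover diagonal factor is of the form $\mathrm{diag}(u,u^{-1})$ with $u\in\Ocal^\times = \la -1,X\ra$ by Dirichlet's unit theorem, so it lies in $\la A_0,A_3\ra$. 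Hence the five matrices generate.

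Next, the seven relators are each verified directly by a $2\times 2$ matrix multiplication: the $C_i$ simply express that $A_0=-\Id$ is central of order two; $R_1,R_2,R_3$ encode the cyclic subgroups of orders two and three already present in $\SL_2(\Zds)$; the relations $R_4,R_5,R_6$ describe how the unit-diagonal $A_3$ either commutes with or conjugates the transvections $A_2,A_4$ into products of the other generators; and $R_7$ is a more subtle identity reflecting the interplay between the two real embeddings of $\Ocal$.

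The deep step, and the main obstacle, is completeness: showing that no further relations are needed. The standard approach is to let $\SL_2(\Ocal)$ act on the product $\Hds\times\Hds$ of two upper half planes through its two real embeddings, construct an $\SL_2(\Ocal)$-equivariant simply-connected 2-complex for this action, and read off a presentation from the cell-stabilizers and face identifications of a fundamental polytope. One then has to verify that every relator arising in this way can be derived from the seven listed $R_i$. This polytope computation is genuinely geometric in nature; it is carried out in Corollary 2.3 of \cite{stover2021geometry}, which we quote rather than reproduce.
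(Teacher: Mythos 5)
Your proposal is correct, and for the only genuinely hard part --- completeness of the presentation --- you end up in exactly the same place as the paper: the paper's entire ``proof'' of this proposition is the single sentence citing Corollary 2.3 of \cite{stover2021geometry}, and you likewise quote that result for the fundamental-domain argument on $\Hds\times\Hds$. What you add beyond the paper is real but supplementary: a self-contained generation argument (Euclidean reduction by transvections in the norm-Euclidean ring $\Ocal$, the Weyl-element trick $E_{21}(r)=A_1E_{12}(-r)A_1^{-1}$, and Dirichlet's unit theorem for the diagonal part) together with the direct verification that the listed relators hold. Your preliminary remark is also a genuine catch: with the paper's convention $X^2=5$ one gets $\det A_3 = 5-X$, which is not even a unit of $\Ocal$, so the symbol $X$ in the matrices $A_3,A_4$ must indeed denote the fundamental unit $Y$ (equivalently, the paper is implicitly using the embedding $X\mapsto 2Y-1$ only for the monodromy matrices, not for the $A_i$); this is consistent with the later identity $\rho_{\Sigma,\trc}=A_4^2A_2^{-3}$, which forces the entry of $A_4$ to be $(1+\sqrt5)/2$. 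In short: same essential approach as the paper on the core step, with a more careful and more informative write-up of the parts the paper leaves implicit.
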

\begin{thm}
    The monodromy group $\Gamma_{E_o}$ is a subgroup of index 20 in $\SL_{2}(\Ocal)$. Hence it has index 2 in $\SL_2(\Ocal_o)$.
\end{thm}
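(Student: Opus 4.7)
The plan is to reduce the index computation to a Todd--Coxeter coset enumeration inside the finitely presented group modelling $\SL_2(\Ocal)$, and then to combine the outcome with Proposition \ref{indexSLOoSLO} via the tower law for indices. Concretely, I aim to verify $[\SL_2(\Ocal):\Gamma_{E_o}]=20$; since Proposition \ref{indexSLOoSLO} gives $[\SL_2(\Ocal):\SL_2(\Ocal_o)]=10$ and since $\Gamma_{E_o}\subseteq \SL_2(\Ocal_o)$, multiplicativity of indices then forces $[\SL_2(\Ocal_o):\Gamma_{E_o}]=20/10=2$.

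The first preparatory step is to realise each of the three matrix generators of $\Gamma_{E_o}$ produced in Section 5 as an explicit word in the symbols $A_0,\dots,A_4$ of Proposition \ref{genSLO}. Two of the three are immediate: a direct matrix multiplication gives $\rho_{\Sigma,\trc}=A_4A_2^{-2}$ and $P^{-1}\rho_{\Pi,\trc}P=A_1A_4A_1^{-1}$. For the remaining generator $\rho_{\Sigma,\edge}$, I would perform a short Euclidean-style row/column reduction, alternately multiplying by powers of $A_2$ and $A_4$ (to kill off-diagonal entries in the unipotent direction) and by $A_1$ (to swap rows). Termination is guaranteed because $\Ocal$ is a Euclidean domain. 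Some care is needed because the symbol $X$ in the matrices of Section 5 satisfies $X^2=5$ whereas the $X$ appearing inside $A_3$ stands for a generator of $\Ocal$ with minimal polynomial $Y^2-Y-1$, the two being related by $\sqrt{5}=2Y-1$.

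The second step is to build, inside Magma, the finitely presented group
\begin{equation*}
\tilde G:=\langle a_0,a_1,a_2,a_3,a_4\mid C_0,\ldots,C_4,R_1,\ldots,R_7\rangle
\end{equation*}
of Proposition \ref{genSLO}, which is isomorphic to $\SL_2(\Ocal)$, together with the subgroup $H\le\tilde G$ generated by the three words produced in the first step. One then invokes Magma's \texttt{Index} routine, implementing Todd--Coxeter coset enumeration, to compute $[\tilde G:H]$, with expected output $20$. The source code realising this is already available at \cite{indexprog}.

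The main obstacle is not conceptual but practical: Todd--Coxeter has no \emph{a priori} bound on coset-table size, so one cannot mathematically rule out in advance that the enumeration fails to terminate within feasible memory; for this particular presentation it does in fact succeed. A secondary and more delicate point is bookkeeping: the rewriting of $\rho_{\Sigma,\edge}$ into a word in the $A_i$'s, and the translation between the two incompatible meanings of $X$ tied respectively to $\Ocal_o$ and to Stover's presentation, must be carried out with care. I would therefore sanity-check each of the three words for $\rho_{\Sigma,\trc},\rho_{\Sigma,\edge},P^{-1}\rho_{\Pi,\trc}P$ by substituting the explicit matrices $A_i$ and verifying equality with the originals before feeding them into the coset enumeration.
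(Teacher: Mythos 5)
Your overall route is exactly the paper's: write the three monodromy generators as words in Stover's generators $A_0,\dots,A_4$, run a Magma/Todd--Coxeter coset enumeration to get $[\SL_2(\Ocal):\Gamma_{E_o}]=20$, and divide by $[\SL_2(\Ocal):\SL_2(\Ocal_o)]=10$ from Proposition \ref{indexSLOoSLO}. The tower-law step and the caveat about termination of coset enumeration are fine.

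However, the two words you call ``immediate'' are wrong, and the error is precisely the $X$-versus-$Y$ identification you yourself flag. Writing $Y$ for the generator of $\Ocal$ with $Y^2=Y+1$ (so that $A_4=\left(\begin{smallmatrix}1&Y\\0&1\end{smallmatrix}\right)$) and $X=\sqrt5=2Y-1$, the matrix $\rho_{\Sigma,\trc}=\left(\begin{smallmatrix}1&X-2\\0&1\end{smallmatrix}\right)$ has upper-right entry $2Y-3$, whereas your $A_4A_2^{-2}$ has entry $Y-2$; similarly $A_1A_4A_1^{-1}=\left(\begin{smallmatrix}1&0\\-Y&1\end{smallmatrix}\right)$, not $\left(\begin{smallmatrix}1&0\\-X&1\end{smallmatrix}\right)$. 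The correct words are $\rho_{\Sigma,\trc}=A_4^2A_2^{-3}$ and $P^{-1}\rho_{\Pi,\trc}P=-A_1A_4^2A_2^{-1}A_1$ (the paper also records $\rho_{\Sigma,\edge}=A_4^{-2}A_2^2A_1A_4^{-2}A_1$). This is not a cosmetic slip: since $Y-2=(\sqrt5-3)/2\notin\Zds[\sqrt5]$, the subgroup generated by your words is not even contained in $\SL_2(\Ocal_o)$, so the enumeration would return the index of the wrong subgroup and the final step $\Gamma_{E_o}\subseteq\SL_2(\Ocal_o)$ would break down. Your proposed sanity check (substituting the explicit matrices before running the enumeration) would catch and repair this, so the gap is routine to fix, but as written the argument does not establish the claimed index.
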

\begin{proof}
    We first consider the index of $\Gamma_{E_o}$ in $\SL_2(\Ocal)$. It is clear to check that  $\rho_{\Sigma,\trc}=A_4^2 A_2^{-3}$, $\rho_{\Sigma,\edge}= A_4^{-2} A_2^2 A_1 A_4^{-2} A_1$ and $P^{-1}\rho_{\Pi,\trc}P= -A_1 A_4^2 A_2^{-1} A_1$. Using the \textit{Index} function in the computer program \textit{Magma} \cite{bosma1997magma}, we could compute that the index $[\SL_2(\Ocal):\Gamma_{E_o}]$ is 20. The Program has been uploaded to \cite{indexprog}.
    From the Proposition \ref{indexSLOoSLO} the index $[\SL_2(\Ocal):\SL_2(\Ocal_o)]$ is 10, hence the index $[\SL_2(\Ocal_o):\Gamma_{E_o}]$ is 2.
\end{proof}

\bibliographystyle{plain}
\bibliography{./bib/Reference}

\end{document}